\DeclarePairedDelimiter\floor{\lfloor}{\rfloor}
\newcommand\CC{\mathbb{C}}
\newcommand\FF{\mathbb{F}}
\newcommand\QQ{\mathbb{Q}}
\newcommand\RR{\mathbb{R}}
\newcommand\ZZ{\mathbb{Z}}
\newcommand\cF{\mathcal{F}}
\newcommand\cHH{\mathcal{H}}
\newcommand\cM{\mathcal{M}}
\newcommand\cT{\mathcal{T}}
\newcommand\cY{\mathcal{Y}}
\newcommand\Sp{\text{Sp}}
\newcommand\h{\mathfrak{h}}
\newtheorem{theorem}{Theorem}[section]
\newtheorem{lemma}[theorem]{Lemma}
\newtheorem{proposition}[theorem]{Proposition}
\newtheorem{corollary}[theorem]{Corollary}
\newtheorem{lemma*}{Lemma}
\theoremstyle{definition}
\newtheorem{remark}{Remark}
\newtheorem{notation*}{Notation}
\newtheorem{question}{Question}
\title{\texorpdfstring{The Infinite Topology of The Hyperelliptic Locus in Torelli Space }{The Infinite Topology of The Hyperelliptic Locus in Torelli Space}}
\author{Kevin Kordek}
\date{}
\begin{document}
\maketitle
\begin{abstract}
Genus $g$ Torelli space is the moduli space of genus $g$ curves of compact type equipped with a homology framing. The hyperelliptic locus is a closed analytic subvariety consisting of finitely many mutually isomorphic components. We use properties of the hyperelliptic Torelli group to show that when $g\geq 3$ these components do not have the homotopy type of a finite CW complex. Specifically, we show that the second rational homology of each component is infinite-dimensional. We give a more detailed description of the topological features of these components when $g=3$ using properties of genus 3 theta functions.
\end{abstract}
%\tableofcontents
%
%
%
%
%
%
\section{Introduction}
Let $S_g$ denote a closed orientable reference surface of genus $g\geq 2$. Fix a hyperelliptic involution $\sigma$ of $S_g$. The \emph{hyperelliptic mapping class group} $\Delta_g$ is defined to be the centralizer of $\sigma$ in the mapping class group $\Gamma_g$ of $\sigma$. That is, $\Delta_g$ is defined to be the subgroup of all mapping classes that commute with $\sigma$. The \emph{hyperelliptic Torelli group} $T\Delta_g$ is the subgroup of $\Delta_g$ that acts trivially on $H_1(S_g,\ZZ)$. 

Much of the structure of $T\Delta_g$ remains unknown.  For example, it is unknown whether $T\Delta_g$ is finitely generated when $g\geq 3$. 
%Clarified here isomorphism between genus 2 and hyperelliptic genus 2.
Since $T\Delta_2$ is equal to the genus 2 Torelli group, it follows from Mess's work \citep[Proposition 4]{mess1992torelli} that $T\Delta_2$ is a free group of infinite rank. Brendle-Childers-Margalit showed in \citep[Corollary 1.2]{brendle2013cohomology} that $T\Delta_3$ is not finitely presented, but is not known whether $T\Delta_g$ is finitely presented or if $H_2(T\Delta_g,\ZZ)$ is finitely generated when $g\geq 4$.

%Incorporating changes here; will mention branch locus of the period map. This paragraph has been split into two paragraphs.
The Torelli space $\cT_g$ is the moduli space of smooth algebraic curves of genus $g$ equipped with a symplectic basis for the first integral homology. The period map $\cT_g\rightarrow \h_g$ to the rank $g$ Siegel upper half space is 2:1 and is branched along the locus of hyperelliptic curves. As we shall explain in Section \ref{moduliofhyperellipticcurves}, this locus consists of finitely many mutually isomorphic components, each of which is a model for the classifying space of $T\Delta_g$; we now fix one of these components, and denote it by $\cHH_g[0]$. Questions about $T\Delta_g$ are therefore equivalent to questions about the topology of $\cHH_g[0]$. 

The Torelli space $\cT_g^c$ of genus $g$ curves of compact type contains $\cT_g$ as an open submanifold. The closure  $\cHH_g^c[0]$ of $\cHH_g[0]$ in $\cT_g^c$ is smooth analytic variety that contains $\cHH_g[0]$ as a dense open subset. Each component of the hyperelliptic locus in $\cT_g^c$ is analytically isomorphic to $\cHH_g^c[0]$. Little is known about the topology of these components, aside from the result of Brendle-Margalit-Putman \citep[Theorem B]{brendle2014generators} that $\cHH_g^c[0]$ is simply-connected. The main result of this paper is the following theorem.
\begin{theorem}\label{theorem1}
For all $g\geq 3$, the rational homology group $H_2(\cHH^c_g[0],\QQ)$ is infinite-dimensional.
\end{theorem}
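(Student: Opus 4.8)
The plan is to compare $\cHH^c_g[0]$ with its open dense subset $\cHH_g[0]$ --- a model for the classifying space $B(T\Delta_g)$ --- through the long exact sequence of the pair $\bigl(\cHH^c_g[0],\cHH_g[0]\bigr)$ together with an analysis of the boundary $\cHH^c_g[0]\setminus\cHH_g[0]$. The first task is to describe the codimension-one boundary strata. Since $\cT^c_g$ parametrizes curves of compact type, a boundary point of $\cHH^c_g[0]$ is a stable curve lying in the closure of the hyperelliptic locus; by the theory of admissible covers it is a tree of smooth hyperelliptic components glued along Weierstrass points, and a codimension-one stratum is obtained by pinching a single symmetric separating simple closed curve $\alpha$ on $S_g$. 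As with any partial compactification of Torelli space, two such curves determine the same boundary divisor precisely when they lie in the same $T\Delta_g$-orbit; since $T\Delta_g$ acts trivially on $H_1(S_g,\ZZ)$ whereas a symmetric separating curve of topological type $(j,g-j)$ splits $H_1(S_g,\ZZ)$ into orthogonal symplectic summands of ranks $2j$ and $2(g-j)$, there are infinitely many such orbits --- already the $\Sp(2g,\ZZ)$-orbit of a single rank-two symplectic summand is infinite. Hence the boundary has infinitely many irreducible components $D_\alpha$, each with connected smooth locus $D_\alpha^{\circ}$.

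The second step is to compute the relative homology in low degrees. Since $\cHH^c_g[0]$ is a smooth complex manifold and its boundary is a closed analytic subset of complex codimension one, a local computation --- the Thom isomorphism for the canonically oriented normal disc bundle of $D_\alpha^{\circ}$, together with the fact that the deeper strata have complex codimension at least two and so contribute only in degrees $\geq 4$ --- yields $H_1\bigl(\cHH^c_g[0],\cHH_g[0];\QQ\bigr)=0$ and
\[
H_2\bigl(\cHH^c_g[0],\cHH_g[0];\QQ\bigr)\;\cong\;\bigoplus_{\alpha}H_0\bigl(D_\alpha^{\circ};\QQ\bigr)\;\cong\;\bigoplus_{\alpha}\QQ,
\]
which is infinite-dimensional.

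The third step brings in simple connectivity. By the theorem of Brendle--Margalit--Putman, $\cHH^c_g[0]$ is simply connected, so $H_1\bigl(\cHH^c_g[0];\QQ\bigr)=0$ and the long exact sequence of the pair contains the segment
\[
H_2\bigl(\cHH^c_g[0];\QQ\bigr)\longrightarrow\bigoplus_{\alpha}\QQ\ \xrightarrow{\ \partial\ }\ H_1\bigl(\cHH_g[0];\QQ\bigr)\longrightarrow 0 .
\]
Under the identification $H_1\bigl(\cHH_g[0];\QQ\bigr)=H_1(T\Delta_g;\QQ)$, the connecting map $\partial$ sends the generator indexed by $D_\alpha$ to the class of a small loop encircling $D_\alpha$, which is the monodromy of the universal curve around $D_\alpha$ --- the Dehn twist $T_\alpha$ about the symmetric separating vanishing cycle $\alpha$. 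Thus $H_2\bigl(\cHH^c_g[0];\QQ\bigr)$ surjects onto $\ker\partial$, the space of $\QQ$-linear relations among the classes $\{[T_\alpha]\}$ that hold in $H_1(T\Delta_g;\QQ)$. (Concretely, a relation $\sum_\alpha c_\alpha[T_\alpha]=0$ is realized by capping the corresponding combination of meridian discs of the $D_\alpha$ with a compact $2$-chain in $\cHH_g[0]$; the resulting $2$-cycles are distinguished by their intersection numbers with the cycle classes $[D_\beta]\in H^2\bigl(\cHH^c_g[0];\QQ\bigr)$, which recover the coefficients $c_\beta$.)

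Finally, one invokes the group-theoretic input that the rational abelianization $H_1(T\Delta_g;\QQ)$ is finite-dimensional for $g\geq 3$. Then $\partial$ is a surjection from the infinite-dimensional space $\bigoplus_\alpha\QQ$ onto a finite-dimensional space, so $\ker\partial$ --- and hence $H_2\bigl(\cHH^c_g[0];\QQ\bigr)$ --- is infinite-dimensional. I expect the two steps of real substance to be the identification of the boundary divisors and the count showing there are infinitely many of them (via the admissible-covers picture of the hyperelliptic locus and careful bookkeeping of the homology framing), and the finite-dimensionality of $H_1(T\Delta_g;\QQ)$. The latter is exactly where $g\geq 3$ is needed: for $g=2$ the group $T\Delta_2$ is free of infinite rank, $H_1(T\Delta_2;\QQ)$ is infinite-dimensional, and this argument collapses.
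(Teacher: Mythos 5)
Your overall skeleton --- the long exact sequence of the pair $(\cHH_g^c[0],\cHH_g[0])$, the identification of $H_2(\cHH_g^c[0],\cHH_g[0];\QQ)$ with the $\QQ$-span of the (infinitely many) boundary divisors, and the use of the Brendle--Margalit--Putman simple connectivity --- is exactly the paper's. But your final step has a genuine gap: you conclude by ``invoking'' the finite-dimensionality of $H_1(T\Delta_g;\QQ)$ for $g\geq 3$, and this is not a known fact. Whether $T\Delta_g$ is even finitely generated for $g\geq 3$ is open (the paper says so in its introduction), and none of the available results (generation by symmetric separating twists, the infinite rank of $H_{g-1}(T\Delta_g,\ZZ)$) yields finiteness of the full abelianization. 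Without a finite-dimensional target, the surjection $\bigoplus_\alpha\QQ\twoheadrightarrow H_1(T\Delta_g;\QQ)$ tells you nothing about its kernel, and the argument collapses at precisely the point where you claim it closes.

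The paper is engineered to avoid this. What it can actually compute is not $H_1(T\Delta_g;\QQ)$ but its \emph{coinvariants} under the image $G_g$ of $\Delta_g$ in $\Sp_g(\ZZ)$ (and under $\Sp_g(\ZZ)[2]$): from the five-term sequence of $1\to T\Delta_g\to\Delta_g\to G_g\to 1$, Morifuji's vanishing $H_1(\Delta_g,\QQ)=H_2(\Delta_g,\QQ)=0$, and Borel's $H_2(G_g,\QQ)\cong\QQ$, one gets $H_1(T\Delta_g,\QQ)_{G_g}\cong\QQ$. Taking $G_g$-coinvariants of your exact sequence and comparing the dimension $\geq\lfloor g/2\rfloor$ of $H_2(\cHH_g^c[0],\cHH_g[0];\QQ)_{G_g}$ with this one-dimensional quotient shows only that the image of $H_2(\cHH_g^c[0];\QQ)$ in the relative group is \emph{non-zero} (for $g=3$ one must pass to level $2$, where the relevant counts are $56$ boundary types against $21$ dimensions of coinvariants). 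The infinite-dimensionality then comes from a separate equivariance argument: that image is a $G_g$-submodule of $\bigoplus_\alpha\QQ$, and every $G_g$-orbit of boundary components is infinite (because the $\Sp_g(\ZZ)$-orbit of the corresponding irreducible component of $\h_g^{red}$ is infinite and $G_g$ has finite index), so any non-zero $G_g$-submodule spans an infinite-dimensional subspace. To repair your write-up, replace your last step with this coinvariants-plus-orbit argument; the earlier steps of your outline are sound.
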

We comment that Theorem \ref{theorem1} stands in stark contrast to the genus 2 case, where it is known that $\cHH_2^c[0] = \cT_2^c$ is contractible \citep[p.11]{hain2006finiteness}.
%Reworded this corollary
\begin{corollary}
For all $g\geq 3$, $\cHH^c_g[0]$ does not have the homotopy type of a finite CW complex. 
\end{corollary}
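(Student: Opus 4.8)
The plan is to deduce the Corollary directly from Theorem \ref{theorem1} by a standard finiteness argument, so the only thing to set up carefully is the logical chain. First I would recall the elementary fact that a finite CW complex has only finitely many cells, hence its cellular chain complex consists of finitely generated free abelian groups concentrated in finitely many degrees; consequently each of its integral homology groups is a finitely generated abelian group, and all but finitely many of them vanish. Since singular homology is a homotopy invariant, the same conclusion holds for any topological space that has the homotopy type of a finite CW complex.

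Next I would apply this to $X=\cHH^c_g[0]$ for $g\geq 3$. If $X$ had the homotopy type of a finite CW complex, then in particular $H_2(X,\ZZ)$ would be a finitely generated abelian group, and therefore $H_2(X,\QQ)\cong H_2(X,\ZZ)\otimes_{\ZZ}\QQ$ would be a finite-dimensional $\QQ$-vector space. But Theorem \ref{theorem1} asserts precisely that $H_2(\cHH^c_g[0],\QQ)$ is infinite-dimensional, a contradiction. Hence no such finite CW model exists.

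I do not expect any genuine obstacle here: all of the substantive work of the paper is concentrated in the proof of Theorem \ref{theorem1}, and the Corollary is a purely formal consequence. The only point requiring a little care is to invoke the finiteness statement in exactly the form needed — finite generation of the single group $H_2(X,\ZZ)$, equivalently finite-dimensionality of $H_2(X,\QQ)$ — so that the dependence of the Corollary on Theorem \ref{theorem1} is completely transparent and no stronger hypotheses about $X$ are silently used.
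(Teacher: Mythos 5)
Your argument is correct and is exactly the standard finiteness argument the paper implicitly relies on: a finite CW complex has finitely generated integral homology in each degree, so $H_2(\cHH^c_g[0],\QQ)$ would be finite-dimensional, contradicting Theorem \ref{theorem1}. Nothing further is needed.
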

Brendle-Childers-Margalit \citep[Main Theorem 2]{brendle2013cohomology} have shown that $H_{g-1}(T\Delta_g,\ZZ)$ is a free abelian group of infinite rank. This implies that for each $g\geq 2$, $\cHH_g[0]$ does not have the homotopy type of a finite complex and, in particular, that $H_2(T\Delta_3,\QQ)$ is an infinite-dimensional $\QQ$-vector space. 
%Added content here which hopefully clarifies and contextualizes the purpose of Question 1:
It seems natural, in light of Theorem \ref{theorem1}, to ask if it is possible, through a study of the natural map $H_2(\cHH_g[0],\QQ)\rightarrow H_2(\cHH_g^c[0],\QQ)$, to extend this latter result to cases where $g \geq 4$.
\begin{question}\label{question1}
Are there values of $g\geq 4$ for which Theorem \ref{theorem1} implies that $H_2(\cHH_g[0],\QQ)$ is infinite-dimensional?\end{question}
An affirmative answer to Question \ref{question1} would imply that $T\Delta_g$ is not finitely presented for these values of $g$.
\\\\\indent 
When $g = 3$, we are able to give a more detailed description of the topological features of $\cHH_3^c[0]$. This is essentially due to the fact that $\cHH_3^c[0]$ has a geometric description as the zero locus of an even genus 3 thetanull in rank 3 Siegel space $\mathfrak{h}_3$. By studying the geometric properties of the zero loci of theta functions, we collect enough information to prove the following result.
\begin{theorem}\label{1.3}
Let $\cHH^c_3[0]$ denote a component of the hyperelliptic locus in $\cT_3^c$. Then $H_k(\cHH_3^c[0],\ZZ) = 0$ for all $k\geq 4$ and $H_3(\cHH_3[0],\ZZ)$ is a free abelian group.
\end{theorem}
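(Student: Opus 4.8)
The plan is to exploit the geometric description of $\cHH_3^c[0]$ as (a component of) the zero locus of an even thetanull $\theta_m$ in the rank $3$ Siegel upper half space $\h_3$. Since $\h_3$ is a complex manifold of dimension $6$ and $\theta_m$ is a holomorphic function on it, its zero locus $Z(\theta_m)$ is a (possibly singular) analytic hypersurface of complex dimension $5$; by the result quoted in the introduction, $\cHH_3^c[0]$ is the smooth variety obtained as (the normalization of) a component of $Z(\theta_m)$, and it is a smooth complex $5$-manifold, hence a real $10$-manifold. The first step is to establish a Lefschetz-type vanishing statement: because $\h_3$ is a Stein manifold (a bounded symmetric domain) of complex dimension $6$ and $\cHH_3^c[0]$ is a smooth analytic hypersurface in it, the affine Lefschetz theorem (in the form due to Andreotti--Frankel / Hamm for Stein spaces and their hypersurface sections) gives that $\cHH_3^c[0]$ has the homotopy type of a CW complex of real dimension at most $\dim_{\CC}\cHH_3^c[0] = 5$. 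This already forces $H_k(\cHH_3^c[0],\ZZ) = 0$ for $k \geq 6$; to push the vanishing down to $k\geq 4$ one uses the finer information that the hyperelliptic locus, being a branch locus of the period map, is cut out with the right multiplicity, together with the description of how the Torelli-space boundary strata (compact-type degenerations) are glued on, to see that the relevant complex has real dimension at most $3$ in the directions transverse to the smooth part — I expect this to require a stratified Morse theory argument on $\overline{\cHH}^c_3[0]$ using the plurisubharmonic exhaustion function restricted from $\h_3$.

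The second step handles $H_3$. Here I would work with the open piece $\cHH_3[0]$, which is a $K(T\Delta_3,1)$, and use the already-quoted computation of Brendle--Childers--Margalit that $H_{g-1}(T\Delta_g,\ZZ) = H_2(T\Delta_3,\ZZ)$ is free abelian of infinite rank, combined with duality. Since $\cHH_3[0]$ is an open complex $5$-manifold that (again by affine Lefschetz applied to $Z(\theta_m)\cap \Tg$ inside $\Tg$, which maps to $\h_3$) is homotopy equivalent to a complex of real dimension at most $5$, and since $T\Delta_3$ is known to be a duality group of the appropriate dimension — or, failing a clean duality statement, since one can run the universal coefficients / half-lives-half-dies argument — the group $H_3(\cHH_3[0],\ZZ)$ is identified up to extension with $H^2$ or with a subgroup of a free abelian group, forcing it to be free. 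More concretely: $H_3(\cHH_3[0],\ZZ)$ is torsion-free provided $H_2(\cHH_3[0],\ZZ)$ has no torsion obstruction, and the latter follows from the Brendle--Childers--Margalit description; one then notes a torsion-free finitely-or-infinitely-generated abelian group that is a subgroup of a product of $\ZZ$'s, or is the kernel/cokernel in a short exact sequence of free groups, is itself free (every subgroup of a free abelian group is free, and this extends to the infinite-rank case).

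The main obstacle, I expect, is the sharp dimension bound $H_k = 0$ for $k \geq 4$ rather than merely $k \geq 6$: the naive affine Lefschetz theorem only gives the homotopy type of a $5$-complex for a smooth hypersurface in a $6$-dimensional Stein manifold, and to gain the extra two units of vanishing one must use something special about this particular hypersurface. The genus $3$ thetanull is highly non-generic — its zero locus is precisely the hyperelliptic locus, and the structure of $\cT_3^c$ near its boundary (where curves degenerate to products of lower-genus hyperelliptic pieces) is very constrained — so I would try to exhibit an explicit plurisubharmonic function on $\cHH_3^c[0]$ whose Morse indices are bounded by $3$, building it from the period coordinates together with the norm of $\theta_m$ and its derivatives; alternatively one can try to build a deformation retraction onto a real $3$-dimensional spine directly from a cell structure adapted to the Torelli-space stratification. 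A secondary subtlety is making the "subgroup of a free abelian group is free" step rigorous in the infinite-rank setting and checking that the map $H_3(\cHH_3[0],\ZZ) \to H_3(\cHH_3^c[0],\ZZ)$ (or the relevant long exact sequence of the pair) does not introduce torsion; this should follow from the normal-crossings structure of the boundary divisor and a Mayer--Vietoris / Wang-sequence computation, but it needs care.
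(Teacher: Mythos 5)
There is a genuine gap, and it sits exactly where you flag it: the Andreotti--Frankel/affine Lefschetz bound only yields $H_k(\cHH_3^c[0],\ZZ)=0$ for $k\geq 6$, and none of the mechanisms you propose for improving this to $k\geq 4$ (stratified Morse theory, an explicit plurisubharmonic function with Morse indices $\leq 3$, a $3$-dimensional spine) is carried out, nor is it plausible that they could be: the vanishing in degrees $4$ and $5$ is not a consequence of the geometry of the thetanull hypersurface alone. It requires the group-theoretic input of Brendle--Childers--Margalit that $H_k(T\Delta_3,\ZZ)=0$ for all $k\geq 3$ (with $H_2(T\Delta_3,\ZZ)$ free abelian), fed into a Gysin-type long exact sequence for the pair consisting of $D_\alpha\cong\cHH_3^c[0]$ and the closed boundary divisor $D_\alpha^{red}=D_\alpha\cap\h_3^{red}$, whose complement $D_\alpha-D_\alpha^{red}\cong\cHH_3[0]$ is a $K(T\Delta_3,1)$. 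The sequence reads
\begin{equation*}
H_c^{10-k-1}(D_\alpha^{red})\rightarrow H_k(D_\alpha-D_\alpha^{red})\rightarrow H_k(D_\alpha)\rightarrow H_c^{10-k}(D_\alpha^{red}),
\end{equation*}
and the second crucial ingredient, absent from your plan, is the computation that $H_c^k(D_\alpha^{red})$ vanishes unless $k=7,8$, where it is free abelian. This follows from a spectral sequence for compactly supported cohomology once one knows that $D_\alpha^{red}$ is a simple normal crossings divisor whose components are copies of $\h_2\times\h_1\cong\RR^8$, whose pairwise intersections are copies of $\h_1^3\cong\RR^6$, and which has no triple intersections; establishing that combinatorial structure (which thetanulls vanish on which reducible loci, and how many components meet where) is the bulk of the work. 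With both inputs, $H_k(D_\alpha)=0$ for $k\geq 4$ because both neighboring terms vanish, and for $k=3$ one gets an injection $H_3(D_\alpha)\hookrightarrow H_c^7(D_\alpha^{red})$.

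Your treatment of $H_3$ has a secondary problem: you invoke duality for $T\Delta_3$, which is not available (the paper notes that $T\Delta_3$ is not even finitely presented), and a ``half-lives-half-dies'' argument that has no footing here; you also slide between $\cHH_3[0]$ and $\cHH_3^c[0]$, which have quite different $H_3$. The correct mechanism is the injection above into the free abelian group $H_c^7(D_\alpha^{red})$, after which the only piece of your plan that survives is the closing observation that subgroups of free abelian groups are free abelian (true in arbitrary rank). Everything upstream of that observation needs to be replaced by the Gysin sequence and the compactly supported cohomology of the boundary divisor.
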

%***** Removed sentence. %One of the main results of \cite{brendle2014generators} is that for each $g\geq 2$ the spaces $\cHH_g^c[0]$ are simply-connected.
%
%
% Improved exposition here.

We do not know the rank 2 \emph{integral} homology of $\cHH^c_3[0]$. If one could show that $H_2(\cHH^c_3[0],\ZZ)$ were free abelian, Theorem \ref{1.3} along with the homological form of Whitehead's Theorem \citep[Corollary 4.33]{hatcher2002algebraic} would imply that $\cHH^c_3[0]$ is homotopy equivalent to a bouquet of 2-spheres and 3-spheres. This would be the first complete determination of the homotopy type of $\cHH_g^c[0]$ for any value of $g\geq 3$. This result would also be interesting from the perspective of the theory of theta functions, since the topology of the zero loci of the even thetanulls of genus $g$ is poorly understood when $g\geq 3$. 
\begin{question}\label{question2}
Is $\cHH^c_3[0]$ homotopy equivalent to a bouquet of 2-spheres and 3-spheres? 
\end{question}
%  Question \ref{question2} could then be answered in the affirmative as an immediate consequence of
%
%
%
%

The paper is organized as follows. The requisite background material on mapping class groups, Teichm\"{u}ller theory, and the classical algebraic geometry of curves, abelian varieties and theta functions is presented in Section 2. In Section 3, we carry out homological computations needed for the proof of Theorem \ref{theorem1}, which is given at the end of Section 4. Sections 5 and 6 are dedicated to considerations in genus 3. In Section 5, we prove many results about the structure of the hyperelliptic locus in $\mathfrak{h}_3$. These structural results are used in Section 6 to compute integral homology of the components of the hyperelliptic locus. The proof of Theorem \ref{1.3} is given at the end of that section. 

\subsection{Acknowledgements}
Sections 5 and 6 of this paper were part of my thesis. I would like to thank my advisor Dick Hain for suggesting the problem of describing the topology of the hyperelliptic locus and for countless helpful discussions. I would also like the thank the referee for many helpful comments and corrections. 

\section{Background}\label{background}
\subsection{Hyperelliptic Torelli Groups}
Assume that $g\geq 2$. The \emph{mapping class group} $\Gamma_g$ is the group of isotopy classes of orientation-preserving diffeomorphisms of a fixed closed, orientable reference surface $S_g$ of genus $g$. Let $\sigma: S_g\rightarrow S_g$ denote a hyperelliptic involution of $S_g$. Define the hyperelliptic mapping class group $\Delta_g$ to be the centralizer in $\Gamma_g$ of $\sigma$. The Torelli group $T_g$ is the subgroup of $\Gamma_g$ that acts trivially on $H_1(S_g,\ZZ)$. It is a torsion-free group. The hyperelliptic Torelli group $T\Delta_g$ is defined to be the intersection $T_g\cap \Delta_g$. That is, $T\Delta_g$ is the subgroup of $\Delta_g$ that acts trivially on $H_1(T\Delta_g,\ZZ)$. Since $T\Delta_g$ is a subgroup of $T_g$, it is also torsion-free.

The natural representation of $\Gamma_g$ on $H_1(S_g,\ZZ)$ induces a map $\Gamma_g\rightarrow \Sp_g(\ZZ)$. This, in turn, induces a map $\Delta_g\rightarrow \Sp_g(\ZZ)$, and A'Campo \citep[Th\'{e}or\`{e}me 1]{acampo} proved that its image $G_g$ contains the level 2 subgroup $\Sp_g(\ZZ)[2]$. In addition, A'Campo proved that the image of $\Delta_g$ in $\Sp_g(\ZZ/2\ZZ)$ is isomorphic to $S_{2g+2}$, thought of as the symmetric group on the $2g+2$ Weierstrass points. 

For each integer $m\geq 0$, define the subgroup $\Delta_g[m]$ of $\Delta_g$ to be the kernel of the composition $\Delta_g\rightarrow \Sp_g(\ZZ)\rightarrow \Sp_g(\ZZ/m\ZZ)$. Then for each $m\geq 1$ there is an exact sequence
\begin{equation}\label{extension}
1\rightarrow T\Delta_g\rightarrow \Delta_g[2m]\rightarrow \Sp_g(\ZZ)[2m]\rightarrow 1.
\end{equation}
\subsection{Moduli of Hyperelliptic Curves}\label{moduliofhyperellipticcurves}
%***In this paragraph, a sentence was added and some typos were corrected. Also, some sentences were reordered, and the bit about the hyperelliptic locus is slightly different. 
Let $\mathfrak{X}_g$ denote the Teichm\"{u}ller space of marked Riemann surfaces $f: C\rightarrow S_g$ of genus $g$. It is a contractible complex manifold of dimension $3g-3$ \citep[p.441]{arbarello2011geometry}. 
%Sentence about action added here
There is a natural left action of $\Gamma_g$ on $\mathfrak{X}_g$; it is holomorphic and properly discontinuous \citep[p.452]{arbarello2011geometry}. The moduli space of genus $g$ curves $M_g$ is, as an analytic variety, isomorphic to the quotient space $\Gamma_g\backslash \mathfrak{X}_g$ \citep[p.442]{arbarello2011geometry}. The hyperelliptic locus $\cHH_g$ in $M_g$ is the subvariety that parametrizes hyperelliptic curves. 

For a fixed hyperelliptic involution $\sigma$ of
%Typo corrected: C--> S_g
 $S_g$, define $\mathfrak{X}_g^{\sigma}$ to be the fixed point locus of the action of $\sigma$ on $\mathfrak{X}_g$. 
%Typo corrected: now just "smooth and contractible"
It is smooth and contractible by \citep[Fact 4.1]{lochak}. Then the hyperelliptic locus in $\mathfrak{X}_g$ is the union
\begin{equation}
\mathfrak{X}_g^{hyp} = \coprod_{\sigma} \mathfrak{X}_g^{\sigma}
\end{equation}
%Typo corrected: isotopy instead of conjugacy
where the union ranges over the set isotopy classes of hyperelliptic involutions of $S_g$ (of which there are countably many) \citep[p.14]{spivey}. The image of $\mathfrak{X}_g^{hyp}$ in $M_g$ is equal to $\cHH_g$,
%Deleted sentence "Let $Y_g$ denote a particular component of $\mathfrak{X}_g^{\sigma}$." Merged the last two sentences.
which is therefore isomorphic to $\Delta_g\backslash \mathfrak{X}^{\sigma}_g$. 

\vspace{.1in}
%
%Brief discussion of level m structures added, and paragraph re-tooled somewhat (cosmetic changes). 
Let $m$ be a non-negative integer. A level $m$ structure on genus $g$ curve $C$ is a choice of ordered basis for $H_1(C,\ZZ/m\ZZ)$ that is symplectic with respect to the intersection pairing. The moduli space of genus $g$ curves with a level $0$ structure is known as Torelli space; it is the quotient space $T_g\backslash \mathfrak{X}_g$. Because $T_g$ is a torsion-free group, $\cT_g$ is a complex manifold. We remark that the moduli space of genus $g$ curves with level $m$ structure is, as a variety, isomorphic to the quotient $\Sp_g(\ZZ)[m]\backslash \cT_g$.
%fixed typo "cT_g^{c,hyp}--> cT_g^{hyp}" on the line below

\vspace{.1in}
The hyperelliptic locus $\cT_g^{hyp}$ in $\cT_g$ is the image of $\mathfrak{X}_g$ in $\cT_g$. This locus is the disjoint union of finitely many components, of which there are exactly $[\Sp_g(\ZZ):G_g]$, as is shown in 
%reference added
\citep[p.13]{hain2006finiteness}. 
%modified sentence 
By A'Campo's result, this number is equal to 
\begin{equation*}
\frac{|\Sp_g(\ZZ/2\ZZ)|}{|S_{2g+2}|} = \frac{2^{g^2}\prod_{k=1}^g(2^{2k}-1)}{(2g+2)!}
\end{equation*}
%Y_g --> mathfrak{X}_g^{\sigma} globally
(see, for example, \citep[p.13]{hain2006finiteness}).
Each component is isomorphic to the quotient $T\Delta_g\backslash \mathfrak{X}^{\sigma}_g$. This is the moduli space of hyperelliptic curves with level 0 structure. Because $\mathfrak{X}_g^{\sigma}$ is contractible and $T\Delta_g$ is a torsion-free group, $T\Delta_g\backslash \mathfrak{X}^{\sigma}_g$ is a $K(T\Delta_g,1)$. 

\vspace{.1in}
A curve of compact type is a stable nodal curve all of whose irreducible components are smooth and whose dual graph is a tree. By deformation theory, the $\cT_g$ can be enlarged to a complex manifold $\cT_g^c$ whose points parametrize genus $g$ curves of compact type with a level %changed homology framing to level 0 structure in the line below% 
$0$ structure (a homology framing). The $\Sp_g(\ZZ)$-action on $\cT_g$ extends to $\cT_g^c$, and the moduli space $M_g^c$ of curves of compact type is the quotient $\Sp_g(\ZZ)\backslash \cT_g^c$ \citep[p.5]{hain2006finiteness}. The hyperelliptic loci $\cHH_g^c$ and $\cT_g^{c,hyp}$ in $M_g^c$ and $\cT_g^c$, respectively, are the closures of $\cHH_g$ and $\cT_g^{hyp}$ inside of $M_g^c$ and $\cT_g^c$, respectively.  It was conjectured by Hain \cite{hain2006finiteness} and later proved by Brendle-Margalit-Putman \cite{brendle2014generators} that the irreducible components of $\cT_g^{c,hyp}$ are all simply-connected.

In what follows, $\cHH_g^c[0]$ will be used to denote a fixed component of $\cT_g^{c,hyp}$ associated with a chosen hyperelliptic involution. Then $\cHH_g[0]:= \cHH_g^c[0]\cap \cT_g$ is component of $\cT_g^{hyp}$. The hyperelliptic loci $\cHH^c_g$ and $\cHH_g$ are obtained, respectively, as quotients $G_g\backslash \cHH_g^c[0]$ and $G_g\backslash \cHH_g[0]$.
\subsection{Abelian Varieties}
%added main reference G&H 
The material in this section is classical, and most of it can be found in, for example, \citep[Ch 2. Sec. 6]{griffiths2014principles}.
%\\\\\indent 
A \emph{polarized abelian variety} $A$ is an abelian variety along with a cohomology class $\theta\in H^2(A,\ZZ)$  (called a polarization) represented by a  positive, integral, translation-invariant $(1,1)$ form . A \emph{principally polarized} abelian variety (ppav) is a polarized abelian variety whose polarization, viewed as a skew-symmetric bilinear form on $H_1(A,\ZZ)$, can be put into the form
\begin{equation*}
\left(\begin{array}{cc}0 & I_{g\times g} \\ -I_{g\times g}& 0\end{array}\right)
\end{equation*}
with respect to some integral basis for $H_1(A,\ZZ)$. A \emph{framing} on a ppav $(A,\theta)$ is a choice of basis for $H_1(A,\ZZ)$ which is symplectic with respect to $\theta$. The symplectic basis will be denoted $\cF$.

Given a framed ppav $(A,\theta, \cF)$ one can find a unique basis of holomorphic 1-forms for which the period matrix is of the form 
\begin{equation*}
\left(\begin{array}{cc}\Omega \\ I_{g\times g}\end{array}\right).
\end{equation*}
In this case, $\Omega$ is called the \emph{normalized period matrix} of $(A,\theta,\cF)$. 
%The period matrix $\Omega$ is symmetric and has positive-definite imaginary part. These are known as  \emph{Riemann's %conditions}.
%
%
%
%Added sentence on reconstruction of ppav from period matrix
A framed ppav can be recovered, up to isomorphism, as the quotient $A_{\Omega_0}:=\CC^g/\Lambda(\Omega_0)$ where $\Lambda(\Omega_0) = \ZZ^g + \ZZ^g\Omega\subset \CC^g$ along with its canonical polarization \citep[pp.306-307]{griffiths2014principles} %and homology framing.

The space of normalized period matrices $\Omega$ of $g$-dimensional ppav's is known as \emph{Siegel space of rank $g$}. It will be denoted by $\h_g$. It is the open subspace of $M_{g\times g}(\CC)\cong \CC^{g(g+1)/2}$
%added description of Siegel space here
consisting of symmetric matrices with positive-definite imaginary part \citep[Proposition 8.1.1]{birkenhake2004complex}. The symplectic group $\Sp_g(\ZZ)$ of integral symplectic $2g\times 2g$ matrices acts holomorphically on $\h_g$ by the following formula \citep[Proposition 8.2.2]{birkenhake2004complex}:
\begin{equation}
M\cdot \Omega = (A\Omega+B)(C\Omega+D)^{-1}\ \ \ \ \ \text{where} \ M = \left(\begin{array}{cc}A & B \\ C& D \end{array}\right)\in \Sp_g(\ZZ).
\end{equation}
The product of two ppav's $(A_1, \theta_1), (A_2, \theta_2)$ is the ppav whose underlying torus is $A_1\times A_2$ and whose polarization is the product polarization  $p_1^*\theta_1\oplus p_2^*\theta_2$, where $p_j : A_1\times A_2$ is projection onto the $j^{th}$ factor \citep[p.5]{hain2002rational}. A ppav which can be expressed as a product of two ppav's of positive dimension is called \emph{reducible}. The sublocus of $\h_g$ parametrizing reducible ppav's will be denoted $\h_g^{red}$.

\begin{proposition}$\left(\emph{Hain, \citep[p.4]{hain2006finiteness}}\right)$
There is an equality
$$\h_g^{red} = \bigcup_{j=1}^{[g/2]}\bigcup_{\phi\in \Sp_g(\ZZ)} \phi\left(\h_j\times \h_{g-j}\right).$$
\end{proposition}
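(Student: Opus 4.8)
The plan is to prove the two inclusions separately, passing back and forth between the analytic description of $\h_g$ and the moduli-theoretic description recalled above. First I would make explicit the embedding $\h_j\times\h_{g-j}\hookrightarrow\h_g$ that is implicit in the statement of the proposition: a pair $(\Omega_1,\Omega_2)$ with $\Omega_1\in\h_j$ and $\Omega_2\in\h_{g-j}$ is sent to the block-diagonal matrix $\left(\begin{smallmatrix}\Omega_1 & 0\\ 0 & \Omega_2\end{smallmatrix}\right)$, which is symmetric and has positive-definite imaginary part because each of its diagonal blocks does, hence lies in $\h_g$. All assertions below refer to this embedding.

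For the inclusion $\supseteq$, I would first check that the image of $\h_j\times\h_{g-j}$ is contained in $\h_g^{red}$. If $\Omega=\left(\begin{smallmatrix}\Omega_1 & 0\\ 0 & \Omega_2\end{smallmatrix}\right)$, then the lattice $\Lambda(\Omega)=\ZZ^g+\ZZ^g\Omega$ splits as $\Lambda(\Omega_1)\oplus\Lambda(\Omega_2)$, so $A_\Omega\cong A_{\Omega_1}\times A_{\Omega_2}$; moreover, since the intersection form attached to the block-diagonal period matrix is itself block-diagonal in the standard basis, the canonical polarization of $A_\Omega$ is the product of the canonical polarizations of the two factors, and the product of two principal polarizations is again principal. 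Thus $A_\Omega$ is reducible, i.e.\ $\Omega\in\h_g^{red}$. Finally, reducibility depends only on the isomorphism class of a ppav, and the $\Sp_g(\ZZ)$-action on $\h_g$ has orbits equal to those isomorphism classes, so $\h_g^{red}$ is $\Sp_g(\ZZ)$-stable; hence $\phi(\h_j\times\h_{g-j})\subseteq\h_g^{red}$ for every $j$ and every $\phi\in\Sp_g(\ZZ)$.

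For the inclusion $\subseteq$, start with $\Omega\in\h_g^{red}$, so that $A_\Omega\cong A_1\times A_2$ as ppav's with $\dim A_i>0$; after interchanging the factors if necessary we may assume $j:=\dim A_1\le g/2$, so $1\le j\le[g/2]$. Choosing framings $\cF_1,\cF_2$ of $A_1$ and $A_2$ produces normalized period matrices $\Omega_1\in\h_j$ and $\Omega_2\in\h_{g-j}$, and interleaving $\cF_1$ and $\cF_2$ into a symplectic basis of $H_1(A_1\times A_2,\ZZ)$ — all the $a$-cycles of both factors, then all the $b$-cycles — shows that the product ppav, so framed, has normalized period matrix $\Omega'=\left(\begin{smallmatrix}\Omega_1 & 0\\ 0 & \Omega_2\end{smallmatrix}\right)\in\h_j\times\h_{g-j}$. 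Since $A_{\Omega'}\cong A_1\times A_2\cong A_\Omega$ as ppav's, the matrices $\Omega$ and $\Omega'$ lie in the same $\Sp_g(\ZZ)$-orbit, so $\Omega=\phi\cdot\Omega'$ for some $\phi\in\Sp_g(\ZZ)$, whence $\Omega\in\phi(\h_j\times\h_{g-j})$. Combining the two inclusions gives the claimed equality.

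The only real obstacle I anticipate is the bookkeeping in the two ``block-diagonal versus product'' identifications: that a product of principally polarized abelian varieties, equipped with the interleaved symplectic basis of its factors, has a block-diagonal normalized period matrix (and conversely), together with the classical fact that the $\Sp_g(\ZZ)$-orbits on $\h_g$ are precisely the isomorphism classes of $g$-dimensional ppav's. Both are standard (see \citep[Ch.~8]{birkenhake2004complex} and \citep[Ch.~2, Sec.~6]{griffiths2014principles}), so the argument amounts to assembling them with care.
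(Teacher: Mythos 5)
Your proof is correct. Note that the paper itself gives no proof of this proposition --- it is stated with attribution to Hain and a citation to \citep[p.4]{hain2006finiteness} --- so there is no in-paper argument to compare against; what you have written is the standard argument one would expect to find behind that citation. Both inclusions are handled properly: the block-diagonal lattice splitting $\Lambda(\Omega_1\oplus\Omega_2)=\Lambda(\Omega_1)\oplus\Lambda(\Omega_2)$ together with the compatibility of the canonical polarizations gives $\supseteq$, and for $\subseteq$ you correctly reduce to the two classical facts that a product of framed ppav's, with the $a$-cycles and $b$-cycles of the factors interleaved into a symplectic basis, has block-diagonal normalized period matrix, and that $\Sp_g(\ZZ)$-orbits on $\h_g$ are exactly isomorphism classes of ppav's (so that $\h_g^{red}$ is $\Sp_g(\ZZ)$-stable and any reducible $\Omega$ is carried by some $\phi$ to a block-diagonal representative). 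The normalization $j\le [g/2]$ by swapping factors is the right way to match the stated range of the union.
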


\begin{remark}\label{remark11}

The following notation shall be used throughout the paper. If a framed (necessarily reducible) ppav has period matrix 
\begin{equation*}
\Omega = \left(\begin{array}{cc}\Omega_1 & 0 \\ 0& \Omega_2 \end{array}\right)
\end{equation*}
for matrices $\Omega_j \in \h_{g_j}$ with $g_1+g_2 = g$ and $g_j >0$, then we will write $\Omega = \Omega_1\oplus \Omega_2$.

\end{remark} 
\subsection{The Period Map}
There is a holomorphic map $\cT_g\rightarrow \h_g$ which sends the isomorphism class $[C;\cF]$ of a framed curve to 
%modify definition of period map 
the period matrix of its jacobian. It is called the \emph{period map}. The sharp form of the Torelli Theorem \citep[p.783]{mess1992torelli} states that the 
period map is a double branched cover of its image, and that it is injective along the locus of hyperelliptic curves. 

The period map extends to a proper holomorphic map $\cT_g^c\rightarrow \h_g$ (the \emph{extended period map}) that sends a framed curve of compact type to 
%modify definition of period map
the period matrix of its (generalized) jacobian (cf. \citep[p.6]{hain2002rational}). The Torelli Theorem no longer holds in this setting; the fiber over a point in $\h_g^{red}$ will have positive dimension \citep[p.6]{hain2006finiteness}.

The boundary $\cT_g^{c,red}: = \cT_g^c - \cT_g$ is equal to the preimage in $\cT_g^c$ of $\h_g^{red}$ under the period map.

\subsection{Theta Functions}
A \emph{theta function of characteristic $\delta$} is a holomorphic function $\vartheta_{\delta}: \h_g\times \CC^g\rightarrow \CC$ defined by the following series:
\begin{equation}\label{thetaseries}
\vartheta_{\delta}(\Omega,z) = \sum_{m\in \ZZ^g}\exp(\pi i\left( (m+\delta')\Omega (m+\delta')^T + 2(m+\delta')(z+\delta'')^T\right))
\end{equation}
where $\delta = (\delta', \delta'') \in \frac{1}{2}\ZZ^{g}/\ZZ^g\oplus \frac{1}{2}\ZZ^{g}/\ZZ^g = \frac{1}{2}\ZZ^{2g}/\ZZ^{2g}\cong \FF^{2g}_{2}$ (cf. \citep[p.3]{grushevsky}). 
%This sentence is redundant -- removing: The vector $\delta$ corresponds to a 2-torsion point of $A_{\Omega}$ and $\vartheta_{\delta}$ is called a theta function \emph{of characteristic $\delta$}. 

By differentiating the series (\ref{thetaseries}), one shows that $\vartheta_{\delta}$ satisfies the \emph{heat equation}:
\begin{equation}
2\pi i (1+\delta_{jk})\frac{\partial \vartheta_{\delta}}{\Omega_{jk}} = \frac{\partial ^2 \vartheta_{\delta}}{\partial z_j\partial z_k}
\end{equation}
where $\Omega = (\Omega_{jk})$ and $\delta_{jk}$ is the Kronecker delta, (cf. \citep[p.8]{grushevsky}).

The theta function $\vartheta_{\delta}$ is said to be \emph{even} (\emph{odd}) if for fixed $\Omega$ it is even (odd) as a function of $z$. Equivalently, $\vartheta_{\delta}$ is even (odd) if the number $\text{ord}_{z = 0}\vartheta_{\delta}(\Omega,z)$ is even (odd). If $\vartheta_{\delta}$ is even (odd), then the characteristic $\delta$ is also said to be even (odd). Equivalently, $\delta$ is even (odd) if and only if the number $\delta' (\delta'')^T\in \FF_2$ is equal to 0 (or 1) \citep[p.3]{grushevsky}. 
%sentence modified
It can be directly checked that $\vartheta_{\delta}(\Omega,z)$ is a non-zero multiple of $\vartheta_0(\Omega, z+ \delta' + \delta''\Omega)$.

If a characteristic $\delta\in \frac{1}{2}\ZZ^{2g}$ can be written in the form
\begin{equation}
\delta = \left((\delta_1',\ldots, \delta_n'), (\delta_1'',\ldots,\delta_n'')\right)
\end{equation}
 for some characteristics $\delta_j = (\delta_j',\delta_j'')\in \frac{1}{2}\ZZ^{2g_j}$ with $g_1+\cdots + g_n = g$, then we write $\delta = \delta_1\oplus\cdots \oplus\delta_n$.
 
%Sentence added
It follows directly from (\ref{thetaseries}) that if $\Omega$ is a block matrix of the form $\Omega_1\oplus\cdots \oplus \Omega_n$, with $\Omega_j\in\h_{g_j}$, then there are characteristics $\delta_j\in \frac{1}{2}\ZZ^{2g_j}$ such that $\delta = \delta_1\oplus\cdots \oplus \delta_n$ and 
\begin{equation}
\vartheta_{\delta}(\Omega,z) = \vartheta_{\delta_1\oplus\cdots \oplus\delta_n}(\Omega,z) = \prod_{1\leq j\leq n}\vartheta_{\delta_j}(\Omega_j, z(j))
\end{equation}
 where $z(j) = (z_{g_1+\cdots g_j+1},\ldots , z_{g_1+\cdots + g_{j+1}})$.
\\\\\indent 
The restriction of the theta function $\vartheta_{\delta}$ to $z = 0$ is the \emph{thetanull of characteristic $\delta$}. It is a holomorphic function
\begin{equation*}
\vartheta_{\delta}(-,0): \h_g\rightarrow \CC.
\end{equation*}
The thetanull $\vartheta_{\delta}(-,0)$ is called even or odd, respectively, if the corresponding theta function is. 
%Sentence modified
It follows from the definition that odd thetanulls vanish identically \citep[p.3]{grushevsky}. 

Theta functions enjoy special transformation properties with respect to the $\Sp_g(\ZZ)$-action on $\h_g\times \CC^g$ given by the formula
$M\cdot (\Omega, z) = \left(M\cdot \Omega,\  z\cdot (C\Omega+D)^{-1}\right)$. More specifically, we have the function $\vartheta_{\delta}(M\cdot \Omega,\ z\cdot (C\Omega+D)^{-1})$ is equal to $u\cdot \vartheta_{\delta'}(\Omega,z)$, where $u$ is a specific (very complicated) non-vanishing holomorphic function depending on $\delta, M, \Omega$ and $z$ and $\delta'$ is a characteristic depending on $\delta$ and $M$ given by another complicated formula. Precise formulas are given in \citep[p.227]{birkenhake2004complex}. Thetanulls obey similar transformation laws.

%Remark added
We remark here that, thanks to these transformation laws, given a fixed $\Omega_0\in \h_g$, the theta function $\vartheta_{\delta}(\Omega_0,z)$ defines a section of a holomorphic line bundle on the ppav $A_{\Omega_0}$ whose Chern class coincides with the polarization \citep[p.324]{birkenhake2004complex}.

%Now fix a period matrix $\Omega_0$. The theta function $\vartheta_{\delta}(\Omega_0, z)$ does not descend to a holomorphic function on $A_{\Omega_0}$; it is, however, a section of a positive holomorphic line bundle on $A_{\Omega_0}$. The divisor on a ppav  corresponding to a principal polarization is called a \emph{theta divisor}; these are unique up to translation \citep[p.321]{griffiths2014principles}. 
%A theta divisor is called \emph{symmetric} if it is stable under the involution $z\rightarrow -z$. The symmetric theta divisors are exactly the zero divisors of the theta functions $\vartheta_{\delta}(\Omega_0,z)$. Any theta divisor on a reducible ppav $A_1\times A_2$ is of the form $\Theta_1\times A_2\cup A_1\times \Theta_2$, where $\Theta_j$ is a theta divisor on $A_j$ \citep[Proposition 1]{mess1992torelli}. The zero divisor of $\vartheta_{\delta}(\Omega_0,z)$ is \emph{even} or \emph{odd} if $\text{ord}_{z = 0}\vartheta_{\delta}(\Omega_0,z)$ is even or odd, respectively.
%%
%The jacobian $\text{Jac}(C)$ of a smooth curve is canonically polarized by the cup product on $H_1(C,\ZZ)$. There is a canonical isomorphism $\text{Pic}^0(C)\rightarrow \text{Jac}(C)$; this is the net content of  Abel's Theorem and the Jacobi Inversion Theorem. The generalized jacobian of a curve of compact type with components $C_j$ is the product $\prod \text{Jac}(C_j)$. 
%Let $C$ be a smooth curve of genus $g$, and let $W_{g-1}\subset \text{Pic}^{g-1}(C)$ denote the codimension 1 subvariety consisting of classes of effective divisors on $C$ of degree $g-1$.

Let $C$ be a smooth curve of genus $g$. Choose a theta characteristic $\alpha$ on $C$, 
%adding some clarification here
i.e. the divisor class of a square root of the canonical bundle of $C$. The map $\text{Pic}^{g-1}(C)\rightarrow \text{Pic}^0(C)$ defined by $x\rightarrow x-\alpha$ sends $W_{g-1}$ to the locus $W_{g-1}-\alpha$ in $\text{Pic}^0(C)$. By Riemann-Roch, the divisor $W_{g-1}-\alpha$ is a symmetric theta divisor \citep[p.324]{birkenhake2004complex}. In other words, it is stable under the involution $x\rightarrow -x$ and its Chern class is equal to the polarization of $\text{Pic}^0(C)$ furnished by the cup product \citep[pp.327-328]{griffiths2014principles}.  This divisor will be denoted by $\Theta_{\alpha}$. This is a geometric formulation of \emph{Riemann's Theorem} \citep[Theorem 11.2.4]{birkenhake2004complex}.

The parity of $\Theta_{\alpha}$ matches that of the theta characteristic $\alpha$ \citep[Proposition 11.2.6]{birkenhake2004complex}. Let $\delta$ be a characteristic such that $\Theta_{\alpha}$ is the zero divisor of $\vartheta_{\delta}$. \emph{Riemann's Singularity Theorem} \citep[Theorem 11.2.5]{birkenhake2004complex} states that for $L\in \text{Pic}^{g-1}(C)$, 
\begin{equation*}
\text{mult}_{L}(W_{g-1}) = \text{mult}_{L-\alpha}(\Theta_{\alpha}) = \text{ord}_{L-\alpha}(\vartheta_{\delta}(\Omega_0,-)) = h^0(L).
\end{equation*}

\section{Homological Computations}
We will need the following two basic computations in order to carry out the calculations in this section. 
\begin{proposition}\label{kawazumi}\hspace{-.05in}$\left(\emph{\citep[p.1]{morifuji}}\right)$
For each $g\geq 2$ one has $H_j(\Delta_g,\QQ) \cong 0$ when $j = 1,2$.
\end{proposition}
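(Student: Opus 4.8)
\emph{Plan.} I would reduce the statement to a computation of the $S_m$-invariant rational cohomology of the moduli space $M_{0,m}$ of $m$ ordered points on $\PP^1$, with $m=2g+2$, and then finish with elementary representation theory of the symmetric group. Concretely: $\cHH_g\cong\Delta_g\backslash\fX_g^{\sigma}$ with $\fX_g^{\sigma}$ contractible and with finite isotropy groups, so the Cartan--Leray spectral sequence with $\QQ$-coefficients degenerates and $H_*(\Delta_g,\QQ)\cong H_*(\cHH_g,\QQ)$. The branch-point description of hyperelliptic curves (equivalently, the Birman--Hilden correspondence) identifies the coarse space $\cHH_g$ with $M_{0,2g+2}/S_{2g+2}$, so the transfer isomorphism gives $H_*(\Delta_g,\QQ)\cong H_*(M_{0,2g+2},\QQ)^{S_{2g+2}}$, and dually it suffices to prove $H^1(M_{0,m},\QQ)^{S_m}=0$ and $H^2(M_{0,m},\QQ)^{S_m}=0$ for $m=2g+2\geq 6$.

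\emph{Input on $M_{0,m}$.} Fixing three of the marked points at $0,1,\infty$ realizes $M_{0,m}$ as the complement of a hyperplane arrangement in $\CC^{m-3}$; it is thus a $K(\pi,1)$ whose rational cohomology ring is, by the Orlik--Solomon (Arnol'd) theorem, generated in degree $1$. Moreover it is classical that $H^1(M_{0,m},\QQ)$ is a nontrivial irreducible $S_m$-representation, namely the one indexed by the partition $(m-2,2)$, of dimension $m(m-3)/2$. (One sees the irreducibility, and that the trivial representation does not occur, by restricting along $S_{m-1}\hookrightarrow S_m$ and comparing with the $S_{m-1}$-module $H^1(\mathrm{Conf}_{m-1}(\CC),\QQ)$, the permutation module on $2$-element subsets.) In particular $H^1(M_{0,m},\QQ)^{S_m}=0$.

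\emph{Degree $2$.} Because $H^*(M_{0,m},\QQ)$ is generated in degree $1$, graded-commutativity produces a surjection of $S_m$-representations $\Lambda^2 H^1(M_{0,m},\QQ)\twoheadrightarrow H^2(M_{0,m},\QQ)$, so it is enough to check that $\Lambda^2 H^1(M_{0,m},\QQ)$ contains no copy of the trivial representation. Write $W:=H^1(M_{0,m},\QQ)$. Every irreducible $\QQ[S_m]$-module is absolutely irreducible and of orthogonal type, so the space of $S_m$-invariant bilinear forms on $W$ is one-dimensional and is spanned by a symmetric form; hence $(\Lambda^2 W)^{S_m}=0$, and therefore $H^2(M_{0,m},\QQ)^{S_m}=0$. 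Combined with the reduction above and $m=2g+2$, this gives $H_1(\Delta_g,\QQ)=H_2(\Delta_g,\QQ)=0$. (The same scheme in fact yields $H_j(\Delta_g,\QQ)=0$ for all $j\geq 1$, using the $S_{m-1}$-equivariant product decomposition $H^*(\mathrm{Conf}_{m-1}(\CC),\QQ)\cong H^*(\CC^{\times},\QQ)\otimes H^*(M_{0,m},\QQ)$ together with the vanishing of $H^{j}(B_{m-1},\QQ)$ for $j\geq 2$.)

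\emph{Main difficulty.} Once the ingredients are in place the argument is very short, so the real work lies in correctly importing the classical facts: the identification $\cHH_g\cong M_{0,2g+2}/S_{2g+2}$, the degree-$1$ generation of $H^*(M_{0,m},\QQ)$ (which is exactly what lets one control $H^2$ from $H^1$), and the description of $H^1(M_{0,m},\QQ)$ as the irreducible $S_m$-module indexed by $(m-2,2)$. The last of these is the point I would be most careful about; if one prefers to avoid citing it, one can instead extract $H^1$ and $H^2$ directly as $S_m$-modules from the Orlik--Solomon presentation and simply observe that the trivial representation is absent from both.
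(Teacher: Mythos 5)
Your argument is correct. Note that the paper does not prove this proposition at all: it is imported verbatim from Morifuji (where it is attributed to Kawazumi), so there is no in-paper proof to compare against. That said, your route is the standard one and dovetails with the machinery the paper does use elsewhere: the Birman--Hilden extension $1\to\ZZ/2\ZZ\to\Delta_g\to\mathrm{Mod}_{0,2g+2}\to 1$ together with the transfer for the pure subgroup $\pi_1(M_{0,2g+2})$ is exactly the reduction the author performs at level $2$ in the proof of Proposition 2.2; your extra step is to pass to $S_{2g+2}$-invariants rather than stay with the pure group. The inputs check out: $\dim S^{(m-2,2)}=m(m-3)/2$ matches the coefficient of $t$ in the Poincar\'e polynomial $(1+2t)\cdots(1+(m-2)t)$ quoted in the paper; the absence of the trivial constituent in $H^1(M_{0,m},\QQ)$ follows from your restriction to $S_{m-1}$ (the restriction is $S^{(m-2,1)}\oplus S^{(m-3,2)}$, which contains no copy of $S^{(m-1)}$); and the degree-$2$ step is sound because Orlik--Solomon generation in degree $1$ gives an equivariant surjection $\Lambda^2 H^1\twoheadrightarrow H^2$, while the orthogonal type of rational Specht modules kills $(\Lambda^2 W)^{S_m}$ for $W$ irreducible. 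The one point I would ask you to write out is the irreducibility (or at least multiplicity-freeness) of $W=H^1(M_{0,m},\QQ)$, since this is genuinely needed for the $\Lambda^2$ step: if $W$ contained a self-dual constituent with multiplicity two, $\Lambda^2 W$ would acquire invariants. Your branching sketch does deliver this for $m\geq 6$ --- the restriction has exactly two irreducible constituents, each with multiplicity one, and no $S_m$-irreducible restricts to either constituent alone --- but as written it is only a parenthetical, and it deserves a displayed argument.
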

\begin{proposition}\label{level2}
For each $g\geq 2$ there is an isomorphism $H_1(\Delta_g[2],\QQ)\cong \QQ^{g(2g+1)-1}$.
\end{proposition}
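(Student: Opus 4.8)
The plan is to replace $\Delta_g[2]$, up to a rationally invisible central $\ZZ/2\ZZ$, by the pure mapping class group of a marked sphere, and then to compute the abelianization of the latter by a standard capping argument. Write $n=2g+2$. First I would invoke the Birman--Hilden theorem: for $g\geq 2$ the hyperelliptic involution $\sigma$ is central in $\Delta_g$ and there is a central extension
\begin{equation*}
1\longrightarrow \langle\sigma\rangle\longrightarrow \Delta_g\longrightarrow \mathrm{Mod}(S_{0,n})\longrightarrow 1,
\end{equation*}
where $\mathrm{Mod}(S_{0,n})$ is the mapping class group of the sphere with $n$ marked points (the images of the $2g+2$ Weierstrass points). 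The composite $\Delta_g\to \mathrm{Mod}(S_{0,n})\to S_n$, where the second arrow records the permutation of the marked points, agrees with the surjection $\Delta_g\to \Sp_g(\ZZ/2\ZZ)\xrightarrow{\sim} S_{2g+2}$ of A'Campo \citep{acampo}. Since $\sigma$ acts as $-I$ on $H_1(S_g,\ZZ)$ it lies in $\Delta_g[2]$, and the discussion above identifies $\Delta_g[2]=\ker(\Delta_g\to\Sp_g(\ZZ/2\ZZ))$ with the preimage in $\Delta_g$ of the pure mapping class group $\mathrm{PMod}(S_{0,n})$. Hence there is a central extension $1\to\langle\sigma\rangle\to\Delta_g[2]\to\mathrm{PMod}(S_{0,n})\to1$, and since $H_q(\ZZ/2\ZZ,\QQ)=0$ for $q>0$ the Lyndon--Hochschild--Serre spectral sequence collapses to give $H_\ast(\Delta_g[2],\QQ)\cong H_\ast(\mathrm{PMod}(S_{0,n}),\QQ)$ for all $\ast$. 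So it suffices to prove $H_1(\mathrm{PMod}(S_{0,n}),\QQ)\cong\QQ^{g(2g+1)-1}$.

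For this I would realize the $n$-marked sphere by capping the boundary of the disk $D_{n-1}$ with $n-1$ marked points by a once-marked disk. The capping homomorphism then produces a central extension
\begin{equation*}
1\longrightarrow \langle T_\partial\rangle\longrightarrow P_{n-1}\longrightarrow \mathrm{PMod}(S_{0,n})\longrightarrow 1,
\end{equation*}
in which $P_{n-1}$ is the pure braid group on $n-1$ strands and $\langle T_\partial\rangle\cong\ZZ$ is generated by the Dehn twist about $\partial D_{n-1}$. It is classical that $H_1(P_{n-1},\ZZ)$ is free of rank $\binom{n-1}{2}$ on the standard generators $A_{ij}$, and that $T_\partial$, being the full twist $\Delta_{n-1}^2$, represents the nonzero class $\sum_{i<j}A_{ij}$. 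Right-exactness of the abelianization functor (equivalently, the five-term exact sequence) therefore yields $H_1(\mathrm{PMod}(S_{0,n}),\QQ)\cong H_1(P_{n-1},\QQ)\big/\QQ\!\cdot\![T_\partial]\cong\QQ^{\binom{n-1}{2}-1}$, and $\binom{n-1}{2}-1=\binom{2g+1}{2}-1=g(2g+1)-1$, as desired.

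The step I expect to require the most care is the first one: one must check that A'Campo's symmetric-group quotient of $\Delta_g$ really is the marked-point permutation coming from the Birman--Hilden identification (so that the two descriptions of the index $(2g+2)!$ subgroup genuinely coincide), and that $\sigma\in\Delta_g[2]$; everything afterward is bookkeeping about braid groups. As a consistency check, note that Proposition \ref{kawazumi} is equivalent to $H_1(\mathrm{Mod}(S_{0,n}),\QQ)=H_2(\mathrm{Mod}(S_{0,n}),\QQ)=0$, which via the transfer for the residual $S_n$-action forces the $S_n$-invariants of $H_1(\mathrm{PMod}(S_{0,n}),\QQ)$ to vanish; this is consistent with the computation above, since the $\QQ^{g(2g+1)-1}$ obtained there is the irreducible $S_n$-representation indexed by the partition $(n-2,2)$.
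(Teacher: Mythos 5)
Your argument is correct, and its first and decisive step is exactly the paper's: the Birman--Hilden central extension $1\to\langle\sigma\rangle\to\Delta_g[2]\to\mathrm{PMod}(S_{0,2g+2})\to 1$ (the paper writes the quotient as $\Gamma_{0,2g+2}=\pi_1(M_{0,2g+2})$, which is the same group), after which the central $\ZZ/2\ZZ$ is rationally invisible. Where you diverge is in computing $H_1$ of the genus-zero group: the paper quotes the Poincar\'e polynomial $(2t+1)(3t+1)\cdots(2gt+1)$ of the hyperplane complement $M_{0,2g+2}$ and reads off the coefficient of $t$, namely $2+3+\cdots+2g=g(2g+1)-1$; you instead cap off the boundary of a disk with $2g+1$ marked points, getting $1\to\langle T_\partial\rangle\to P_{2g+1}\to\mathrm{PMod}(S_{0,2g+2})\to 1$, and use the classical facts that $H_1(P_{2g+1},\ZZ)\cong\ZZ^{\binom{2g+1}{2}}$ on the generators $A_{ij}$ and that the full twist maps to $\sum_{i<j}A_{ij}\neq 0$. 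The two computations are of course two faces of the same Arnol'd/Orlik--Solomon circle of ideas, but yours is more self-contained, needing only the abelianization of the pure braid group rather than the full cohomology of $M_{0,n}$. Two small points of hygiene: the arrow $\Sp_g(\ZZ/2\ZZ)\xrightarrow{\sim}S_{2g+2}$ should really be an isomorphism of the \emph{image} of $\Delta_g$ in $\Sp_g(\ZZ/2\ZZ)$ with $S_{2g+2}$ (A'Campo's statement), since $\Sp_g(\ZZ/2\ZZ)$ itself is much larger for $g\geq 3$; and the identification of $\ker(\Delta_g\to\Sp_g(\ZZ/2\ZZ))$ with the preimage of $\mathrm{PMod}(S_{0,2g+2})$, which you correctly flag as the delicate step, is precisely the compatibility of A'Campo's symmetric quotient with the Weierstrass-point permutation action and is standard.
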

\begin{proof}
It follows from the Birman-Hilden Theorem \citep[p.269]{farb2011primer} that there is an extension 
$$1\rightarrow \ZZ/2\ZZ\rightarrow \Delta_g[2]\rightarrow \Gamma_{0,2g+2}\rightarrow 1$$
%two sentences below reorganized
where $\Gamma_{0,2g+2}$ is the fundamental group of the moduli space $M_{0,2g+2}$ of smooth genus 0 curves with $2g+2$ marked points. 
This implies that  $H_1(\Delta_g[2],\QQ)\cong H_1(\Gamma_{0,2g+2},\QQ)$. The space $M_{0,2g+2}$ known to be a hyperplane complement with Poincar\'{e} polynomial 
$$(2t+1)\cdots (2gt+1)$$
(cf. \citep[p.2]{bergstrombrown}).
Since the coefficient of $t$ in the polynomial is equal to $\sum_{j=2}^{2g}j = \left(g(2g+1)-1\right)$, we are done. 
\end{proof}

%references edited 
We will need to compute the low degree homology of certain finite-index subgroups of $\Sp_g(\ZZ)$. This is accomplished by applying the following two theorems, both of which are due to Borel \citep[p.604]{haininfinitesimal}.
\begin{theorem}[Borel Stability]\label{borel}
For $k< g$, the rational cohomology $H^k(\Sp_g(\ZZ),\QQ)$ agrees with the degree $k$ part of the graded algebra $\QQ[x_2,x_6, x_{10},\ldots ]$, where the generator $x_{4k+2}$ has weight $4k+2$.
\end{theorem}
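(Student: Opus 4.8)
The plan is to obtain this from the two cornerstones of Borel's theory of the stable cohomology of arithmetic groups: a comparison isomorphism, in a range of degrees growing linearly with $g$, between the rational cohomology of $\Sp_g(\ZZ)$ and the space of invariant differential forms on Siegel space; and the classical identification of that space of invariant forms with the de Rham cohomology of the compact dual symmetric space. Write $G = \Sp_g(\RR)$, $K = U(g)$, so $\h_g = G/K$, let $\mathfrak{g} = \mathfrak{k}\oplus\mathfrak{p}$ be the Cartan decomposition, and set $\mathcal{A}_g := \Sp_g(\ZZ)\backslash\h_g$, an orbifold which is a rational classifying space for $\Sp_g(\ZZ)$, so that $H^*(\Sp_g(\ZZ),\QQ)\cong H^*(\mathcal{A}_g,\QQ)$. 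Because $[\mathfrak{p},\mathfrak{p}]\subseteq\mathfrak{k}$, every $G$-invariant form on $\h_g$ is closed, so the invariant forms constitute a subcomplex with zero differential; descending to $\mathcal{A}_g$ gives a ring map
\begin{equation*}
j\colon (\Lambda\mathfrak{p}^*)^K \;\longrightarrow\; H^*(\mathcal{A}_g,\RR).
\end{equation*}

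First I would invoke Borel's theorem: $j$ is injective in all degrees and an isomorphism for $* < g$. I would treat this as a black box, but indicate its mechanism: one replaces $\mathcal{A}_g$ by its Borel--Serre bordification, a compact manifold with corners of the same homotopy type (so all cohomology in sight is finite-dimensional), then uses the Matsushima-type decomposition of $H^*(\mathcal{A}_g,\RR)$ over automorphic representations, observes that the trivial representation contributes exactly the invariant classes $(\Lambda\mathfrak{p}^*)^K$, and applies vanishing theorems for the $(\mathfrak{g},K)$-cohomology of the remaining cuspidal and Eisenstein contributions, whose cohomology is concentrated above the stability degree. The range $* < g$ is the form recorded in \citep[p.604]{haininfinitesimal}; the sharper constants available in the literature comfortably cover the degrees $\le 3$ actually used in this paper.

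Next I would identify the source of $j$. By a theorem of \'{E}.~Cartan relating a noncompact symmetric space to its compact dual, $(\Lambda\mathfrak{p}^*)^K$ is the de Rham cohomology $H^*(X_u;\RR)$ of $X_u = \Sp(g)/U(g)$, the compact Hermitian symmetric space of type $\mathrm{CI}$. Since $X_u$ is Hermitian symmetric it has a Bruhat cell decomposition with cells only in even dimension and Poincar\'{e} polynomial $\prod_{i=1}^{g}(1+t^{2i})$; the inclusions $X_u \hookrightarrow \Sp(g+1)/U(g+1)$ are highly connected, so in the range $* < g$ this computes the cohomology of the stable space $\Sp/U$, which by Bott periodicity is homotopy equivalent to a loop space, hence an $H$-space, and whose rational cohomology is therefore a free graded-commutative algebra --- necessarily polynomial, being concentrated in even degrees. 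Euler's identity $\prod_{i\ge 1}(1+t^{2i}) = \prod_{k\ge 0}(1-t^{4k+2})^{-1}$ then pins it down: $H^*(\Sp/U;\QQ) = \QQ[x_2,x_6,x_{10},\ldots]$ with $\deg x_{4k+2} = 4k+2$. Combined with the previous step this gives the ring statement; the weight grading follows upon realizing the generators concretely as the odd Chern classes $c_{2k+1}(\mathbb{E})$ of the rank-$g$ Hodge bundle $\mathbb{E}$ on $\mathcal{A}_g$ --- the even Chern classes vanishing rationally by Mumford's relation $c(\mathbb{E})\,c(\mathbb{E}^\vee) = 1$ --- which are algebraic of Hodge type $(2k+1,2k+1)$, hence of weight equal to degree.

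The main obstacle is squarely the first step: proving that invariant forms already exhaust the rational cohomology through degree $g-1$. This draws on reduction theory (the Borel--Serre compactification), the spectral decomposition of automorphic forms, and representation-theoretic vanishing results for $(\mathfrak{g},K)$-modules, none of which can be reproduced by elementary means; I would simply cite Borel. The remaining ingredients --- Cartan duality and the cohomology of $\Sp(g)/U(g)$ --- are classical, and the only real bookkeeping is to check that whichever version of Borel's bound one cites actually includes the asserted degrees $k<g$; for this paper's applications only $k\le 3$ is ever needed, so this is automatic.
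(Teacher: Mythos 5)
The paper does not prove this statement; it is quoted verbatim as a theorem of Borel with a citation to \citep[p.604]{haininfinitesimal}, so there is no proof of record to compare against. Your sketch is a correct account of the standard argument --- black-boxing Borel's comparison isomorphism between stable cohomology and invariant forms, then using Cartan duality, the Poincar\'{e} polynomial $\prod_{i=1}^{g}(1+t^{2i})$ of the compact dual $\Sp(g)/U(g)$, Euler's identity, and the Hodge-bundle Chern classes to pin down the ring and the weights --- and since the only non-elementary ingredient is the very theorem the paper itself cites, your treatment is consistent with (and more informative than) the paper's.
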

\begin{theorem}[Borel]\label{hainspg}
Suppose that V is an irreducible rational representation of the algebraic
group $\Sp_g$ and that $\Gamma$ is a finite index subgroup of $\Sp_g(\ZZ)$. If $k < g$, then
$H_k(\Gamma, V )$ vanishes when $V$ is non-trivial, and agrees with the stable cohomology of
$\Sp_g(\ZZ)$ when $V$ is the trivial representation.
\end{theorem}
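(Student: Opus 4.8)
The plan is to derive this from Borel's stability theorems for the real cohomology of arithmetic groups, isolating the passage to twisted coefficients and the passage to a finite-index subgroup as the two points that actually need attention.

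First I would replace homology by cohomology. Because $V$ is a finite-dimensional $\QQ$-vector space there is a natural duality isomorphism $H_k(\Gamma,V)\cong H^k(\Gamma,V^{\vee})^{\vee}$, and the contragredient $V^{\vee}$ of an irreducible rational representation of $\Sp_g$ is again irreducible and rational, and is trivial exactly when $V$ is. So it suffices to show, for $k<g$ and $W$ an irreducible rational $\Sp_g$-representation, that $H^k(\Gamma,W)=0$ when $W$ is nontrivial and $H^k(\Gamma,W)\cong H^k(\Sp_g(\ZZ),\QQ)$ when $W$ is trivial.

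The engine is Borel's theorem in its form with coefficients: if $\Gamma$ is an arithmetic subgroup of a semisimple $\QQ$-group $G$ and $W$ is a rational representation, then the natural comparison map from continuous cohomology $H^k_{ct}(G(\RR),W)\to H^k(\Gamma,W)$ is an isomorphism in a stable range $k\le c(G)$ that does not depend on $\Gamma$, and for $G=\Sp_{2g}$ that range contains $k<g$. By van Est's theorem the left-hand side is the relative Lie algebra cohomology $H^k(\mathfrak{sp}_{2g}(\RR),U(g);W)$. If $W$ is a nontrivial irreducible representation the Casimir operator acts on it by a nonzero scalar while acting by zero on cohomology, so this group vanishes in every degree; if $W$ is trivial it is the cohomology of the compact dual symmetric space, i.e.\ exactly the stable cohomology of Theorem \ref{borel}. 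Applying this with $\Gamma$ and then with $\Gamma=\Sp_g(\ZZ)$ yields both assertions simultaneously. As an independent check of the finite-index part one can note that the transfer shows $\mathrm{res}\colon H^k(\Sp_g(\ZZ),\QQ)\to H^k(\Gamma,\QQ)$ is split injective, and Borel stability makes the two sides equidimensional for $k<g$, so the restriction is an isomorphism there.

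The obstacle here is quantitative rather than conceptual. Everything rests on two external inputs: that Borel's stable range for $\Sp_{2g}$ really does reach degree $g-1$, and that $(\mathfrak{g},K)$-cohomology with nontrivial irreducible finite-dimensional coefficients vanishes in all degrees. For the first I would cite Borel's explicit range estimates together with their later sharpenings rather than reprove them; for the second I would invoke the standard Casimir argument. Given these, the theorem follows formally.
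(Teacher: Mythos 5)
The paper does not prove this statement at all: it is quoted verbatim as a theorem of Borel, with a pointer to Hain's \emph{Infinitesimal presentations of the Torelli groups} (p.~604), where it is likewise cited to Borel's stability papers. So there is no "paper proof" to compare against; what you have written is the standard argument underlying that citation, and it is essentially correct. Your reduction from homology to cohomology via $H_k(\Gamma,V)\cong H^k(\Gamma,V^{\vee})^{\vee}$ is fine (and for $\Sp_{2g}$ every irreducible rational representation is in fact self-dual, which makes that step even cleaner), the comparison with continuous cohomology in Borel's stable range, van Est, the Casimir/Wigner vanishing for nontrivial irreducible coefficients, and the identification with the cohomology of the compact dual for trivial coefficients are exactly the ingredients Borel's theorem packages. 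The one point that genuinely needs care is the one you flag: Borel's original 1974 stability range for $\Sp_{2g}$ does \emph{not} reach degree $g-1$; the range $k<g$ used here relies on the later sharpened bounds (Borel's second paper and subsequent improvements), and any written version should cite those explicitly rather than the first stability paper. Your transfer "independent check" is only a consistency remark, not independent (equidimensionality already presupposes Borel's theorem for both groups), but you present it as such. In short: the proposal is a correct expansion of a result the paper merely imports.
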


Theorems \ref{borel} and \ref{hainspg} combined immediately give the following result.
\begin{proposition}\label{proposition1}
For any finite-index subgroup $\Gamma\subset \Sp_g(\ZZ)$ with $g\geq 3$ we have 
\begin{equation*}
H_1(\Gamma, \QQ) = 0 \ \ \ \ \text{and} \ \ \ \ H_2(\Gamma,\QQ) = \QQ.
\end{equation*}
\end{proposition}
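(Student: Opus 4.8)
The plan is to read off both statements directly from the two Borel theorems just quoted, being careful about the shift between cohomology and homology and about the range of validity. First I would fix a finite-index subgroup $\Gamma \subset \Sp_g(\ZZ)$ with $g \geq 3$, and apply Theorem \ref{hainspg} with $V$ the trivial representation $\QQ$. Since $g \geq 3$, both $k = 1$ and $k = 2$ satisfy $k < g$, so Theorem \ref{hainspg} tells us that $H_1(\Gamma,\QQ)$ and $H_2(\Gamma,\QQ)$ agree with the corresponding stable cohomology groups of $\Sp_g(\ZZ)$ — that is, with $H^1(\Sp_g(\ZZ),\QQ)$ and $H^2(\Sp_g(\ZZ),\QQ)$ (homology and cohomology with $\QQ$ coefficients being dual, and these spaces being finite-dimensional in the stable range, the distinction is immaterial).

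Next I would compute those two stable cohomology groups using Theorem \ref{borel}. The stable rational cohomology ring of $\Sp_g(\ZZ)$ is the free graded-commutative algebra $\QQ[x_2, x_6, x_{10}, \ldots]$ on generators in degrees $4k+2$. In degree $1$ there are no generators and no products landing in degree $1$, so the degree-$1$ part is $0$, giving $H_1(\Gamma,\QQ) = 0$. In degree $2$ the only contribution is the single generator $x_2$ (the next generator $x_6$ is in too high a degree, and there are no degree-$1$ elements to multiply), so the degree-$2$ part is one-dimensional, giving $H_2(\Gamma,\QQ) = \QQ$. Assembling these two computations yields the proposition.

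There is essentially no obstacle here: the proposition is a formal consequence of the two cited Borel theorems once one checks that $g \geq 3$ places degrees $1$ and $2$ inside the stability range $k < g$, and that the low-degree part of $\QQ[x_2, x_6, \ldots]$ is as claimed. If anything, the only point requiring a word of care is the passage between $H_k(\Gamma,\QQ)$ as stated in Theorem \ref{hainspg} and $H^k(\Sp_g(\ZZ),\QQ)$ as computed by Theorem \ref{borel}; I would note that over $\QQ$ these group (co)homology spaces are finite-dimensional in this range and mutually dual, so identifying their dimensions suffices for the statement. Accordingly I expect the proof to be only a couple of lines.
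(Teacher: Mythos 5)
Your argument is exactly the paper's: the author simply states that Theorems \ref{borel} and \ref{hainspg} "combined immediately give" the proposition, and your write-up supplies precisely the details that are left implicit (checking $1,2 < g$, reading off the degree-$1$ and degree-$2$ parts of $\QQ[x_2,x_6,\ldots]$, and noting the harmless homology/cohomology duality over $\QQ$). The proposal is correct and follows the same route.
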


With the aid of (\ref{extension}) and Propositions \ref{kawazumi} and \ref{level2} we will prove the following result.
\begin{proposition}\label{coinvariants}
For each $g\geq 3$ there are isomorphisms
\begin{equation*}
H_1(T\Delta_g,\QQ)_{\Sp_g(\ZZ)[2]} \cong \QQ^{g(2g+1)}\ \ \ \ \ \text{and}\ \ \ \ \ H_1(T\Delta_g,\QQ)_{G_g}\cong \QQ.
\end{equation*}
\end{proposition}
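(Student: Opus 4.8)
The plan is to extract both isomorphisms from the extension (\ref{extension}) with $m = 1$, namely $1\to T\Delta_g \to \Delta_g[2] \to \Sp_g(\ZZ)[2] \to 1$, via the five-term exact sequence of the associated Lyndon--Hochschild--Serre spectral sequence in rational homology. For the first isomorphism, I would write the five-term sequence
\[
H_2(\Delta_g[2],\QQ) \to H_2(\Sp_g(\ZZ)[2],\QQ) \to H_1(T\Delta_g,\QQ)_{\Sp_g(\ZZ)[2]} \to H_1(\Delta_g[2],\QQ) \to H_1(\Sp_g(\ZZ)[2],\QQ) \to 0.
\]
By Proposition \ref{proposition1} (applied to the finite-index subgroup $\Sp_g(\ZZ)[2]$ with $g\geq 3$) we have $H_1(\Sp_g(\ZZ)[2],\QQ) = 0$ and $H_2(\Sp_g(\ZZ)[2],\QQ) = \QQ$. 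The key additional input is that the map $H_2(\Delta_g[2],\QQ) \to H_2(\Sp_g(\ZZ)[2],\QQ)$ is surjective; granting this, the sequence collapses to a short exact sequence $0 \to H_1(T\Delta_g,\QQ)_{\Sp_g(\ZZ)[2]} \to H_1(\Delta_g[2],\QQ) \to 0$, i.e. an isomorphism onto $H_1(\Delta_g[2],\QQ)$, which by Proposition \ref{level2} is $\QQ^{g(2g+1)-1}$.

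This does not immediately match the claimed $\QQ^{g(2g+1)}$; the discrepancy of one dimension means I expect the correct reading is that the map $H_2(\Delta_g[2],\QQ)\to H_2(\Sp_g(\ZZ)[2],\QQ)=\QQ$ is in fact \emph{zero}, not surjective — indeed, since $H_2(\Delta_g,\QQ) = 0$ by Proposition \ref{kawazumi} and $\Delta_g[2]$ has finite index in $\Delta_g$, a transfer argument shows the composite $H_2(\Delta_g[2],\QQ)\to H_2(\Delta_g,\QQ)=0$ detects the image, and one checks that the class in $H_2(\Sp_g(\ZZ)[2],\QQ)$ does not survive. Then the five-term sequence gives $0 \to \QQ \to H_1(T\Delta_g,\QQ)_{\Sp_g(\ZZ)[2]} \to H_1(\Delta_g[2],\QQ) \to 0$, hence $\dim H_1(T\Delta_g,\QQ)_{\Sp_g(\ZZ)[2]} = 1 + (g(2g+1)-1) = g(2g+1)$, as claimed. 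So the main obstacle, and the step needing genuine care, is pinning down the image of the transgression/inflation map $H_2(\Sp_g(\ZZ)[2],\QQ)\to H_1(T\Delta_g,\QQ)_{\Sp_g(\ZZ)[2]}$ and verifying it is injective — equivalently, that the generator of $H_2(\Sp_g(\ZZ)[2],\QQ)$ does not lift to $H_2(\Delta_g[2],\QQ)$. I would establish this by comparing with the analogous extension $1\to T\Delta_g \to \Delta_g \to G_g \to 1$ and using that $G_g$ also has finite index in $\Sp_g(\ZZ)$ (so Proposition \ref{proposition1} applies, giving $H_2(G_g,\QQ)=\QQ$), together with naturality of the five-term sequence under the inclusions of finite-index subgroups and Proposition \ref{kawazumi}.

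For the second isomorphism, I would run the same five-term sequence for $1\to T\Delta_g \to \Delta_g \to G_g \to 1$:
\[
H_2(\Delta_g,\QQ) \to H_2(G_g,\QQ) \to H_1(T\Delta_g,\QQ)_{G_g} \to H_1(\Delta_g,\QQ) \to H_1(G_g,\QQ) \to 0.
\]
Here Proposition \ref{kawazumi} gives $H_1(\Delta_g,\QQ) = H_2(\Delta_g,\QQ) = 0$, and Proposition \ref{proposition1} applied to $G_g$ gives $H_2(G_g,\QQ) = \QQ$ (and $H_1(G_g,\QQ)=0$). Feeding these in, the sequence becomes $0 \to \QQ \to H_1(T\Delta_g,\QQ)_{G_g} \to 0 \to 0$, so $H_1(T\Delta_g,\QQ)_{G_g} \cong \QQ$. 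This case is clean: the vanishing of $H_2(\Delta_g,\QQ)$ forces the edge map into $H_2(G_g,\QQ)$ to be zero and $H_1(\Delta_g,\QQ)=0$ kills the other end, so no delicate surjectivity or injectivity claim is needed beyond what Propositions \ref{kawazumi} and \ref{proposition1} supply directly.

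Finally, I would double-check consistency between the two computations: the quotient $G_g/\Sp_g(\ZZ)[2] \cong S_{2g+2}$ acts on $H_1(T\Delta_g,\QQ)_{\Sp_g(\ZZ)[2]} \cong \QQ^{g(2g+1)}$, and taking further coinvariants must yield the one-dimensional space $H_1(T\Delta_g,\QQ)_{G_g}$; this is an internal sanity check that the $\QQ^{g(2g+1)}$ figure (rather than $\QQ^{g(2g+1)-1}$) is the right one, since $g(2g+1) = \dim \Sp_g$ is exactly the dimension of the standard representation $\mathrm{Sym}^2$ of $\Sp_g$ restricted appropriately, whose $S_{2g+2}$-coinvariants (interpreting this via the $2g+2$ Weierstrass points, cf. the branch locus description in Section \ref{moduliofhyperellipticcurves}) are one-dimensional. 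I would present this representation-theoretic identification of $H_1(T\Delta_g,\QQ)_{\Sp_g(\ZZ)[2]}$ as a $\QQ[S_{2g+2}]$-module as the load-bearing remark that reconciles the two counts.
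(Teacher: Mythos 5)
Your proposal is correct and follows essentially the same route as the paper: both isomorphisms are read off from the five-term sequences of the extensions $1\to T\Delta_g\to \Delta_g[2]\to \Sp_g(\ZZ)[2]\to 1$ and $1\to T\Delta_g\to \Delta_g\to G_g\to 1$, using Propositions \ref{kawazumi}, \ref{level2}, and \ref{proposition1}. Your corrected identification of the key point --- that the image of $H_2(\Delta_g[2],\QQ)\to H_2(\Sp_g(\ZZ)[2],\QQ)$ is zero, deduced from $H_2(\Delta_g,\QQ)=0$ via naturality and a finite-index transfer argument --- is exactly the step the paper describes as ``readily checked.''
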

\begin{proof}
We will consider the 5-term exact sequences in homology associated to the extensions
\begin{align*}
1\rightarrow T\Delta_g\rightarrow &\Delta_g\rightarrow G_g\rightarrow 1\\
1\rightarrow T\Delta_g\rightarrow \Delta_g[2]&\rightarrow \Sp_g(\ZZ)[2]\rightarrow 1.
\end{align*}
%explanation of method here
 Since $H_2(\Delta_g,\QQ)$ vanishes by Proposition \ref{kawazumi}, it is ready checked that the image of the $H_2(\Delta_g[2],\QQ)\rightarrow H_2(\Sp_g(\ZZ)[2],\QQ)$ is trivial. It follows that we have an exact sequence
\begin{equation*}
0\rightarrow H_2(\Sp_g(\ZZ)[2],\QQ)\rightarrow H_1(T\Delta_g,\QQ)_{\Sp_g(\ZZ)[2]}\rightarrow H_1(\Delta_g[2],\QQ)\rightarrow 0.
\end{equation*}
The term on the left has dimension 1 and the term on the right has dimension $g(2g+1)-1$. Thus $H_1(T\Delta_g,\QQ)_{\Sp_g(\ZZ)[2]}$ has the claimed dimension. 

On the other hand, by Proposition \ref{kawazumi} we obtain an exact sequence
\begin{equation*}
0\rightarrow H_2(G_g,\QQ)\rightarrow H_1(T\Delta_g,\QQ)_{G_g}\rightarrow 0.
\end{equation*}
Since $H_2(G_g,\QQ)\cong \QQ$, we see that $H_1(T\Delta_g,\QQ)_{G_g}$ also has the claimed dimension.
\end{proof}

\section{The Combinatorics of The Boundary}
\begin{remark}
In this section, all homology groups are taken with $\QQ$-coefficients.
\end{remark}

%*****The moduli space $\overline{\cHH}_g$ of stable hyperelliptic curves of genus $g$ is the closure of $H_g$ inside the Deligne-Mumford compactification $\overline{M}_g$ of $M_g$. The boundary of $\overline{\cHH}_g$ is equal to its intersection with the boundary divisors of $\overline{M}_g$. Precisely $\floor*{\frac{g}{2}}$ of these boundary components $\delta_i$ correspond to the nodal hyperelliptic curves of compact type, while the remaining components comprise a divisor $\delta_{irr}$ whose generic points correspond to irreducible nodal hyperelliptic curves \citep[p.390]{arbarello2011geometry}. The the moduli space of genus $g$ hyperelliptic curves of compact type $\cHH_g^c$ is an open subvariety of $\overline{\cHH}_g$; it is equal to $\overline{\cHH}_g-\delta_{irr}$. 

The boundary $\cHH_g^c - \cHH_g$ of $\cHH_g^c$ is a divisor in $\cHH_g^c$ with $\floor*{\frac{g}{2}}$ boundary components. 
% sentence added
This follows quite readily from the discussion on p.390 of \cite{arbarello2011geometry}. 
These boundary components correspond to partitions of the form $g = g_1 + g_2$ with $g_1,g_2\geq 1$. Let $D_i$ denote the boundary component corresponding to the partition given by $g = i + (g-i)$, where $i = 1,\ldots, \floor*{\frac{g}{2}}$. Each $D_i$ is an irreducible variety. A generic point of $D_i$ corresponds to a curve obtained by identifying two smooth hyperelliptic curves of genus $i$ and $g-i$, respectively, at Weierstrass points \citep[p.390]{arbarello2011geometry}.

The preimage of $D_i$  in $\cHH_g^c[0]$ under the map $\cHH_g^c[0]\rightarrow \cHH_g^c$ is a countably infinite union of its irreducible components. Since $\cHH_g^c[0]$ is a smooth analytic variety, $H_2(\cHH_g^c[0], \cHH_g[0])$, is freely generated by the irreducible components of $\cHH_g^{c,red}[0]$. These components are permuted by the group $G_g$. Since $G_g$ acts on the pair $(\cHH_g^c[0], \cHH_g[0])$, we are able to prove the following.
\begin{proposition}\label{orbits}
For each $g\geq 2$ we have a $G_g$-module isomorphism
$$H_2(\cHH_g^c[0], \cHH_g[0])\cong \bigoplus_{j=1}^{\floor*{\frac{g}{2}}} M_j$$
where $M_j$ is a non-empty direct sum of induced representations of the form $\emph{Ind}^{G_g}_{H}\QQ$ for a subgroup $H$ of $G_g$. 
%\cong \bigoplus_{j=1}^{[g/2]} \emph{Ind}_{\emph{Stab}_j}^{G_g}\QQ.$$
%where $\emph{Stab}_j$ denotes the stabilizer of $D_j$ in $G_g$.
\end{proposition}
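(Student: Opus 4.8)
The plan is to recognize $H_2(\cHH_g^c[0],\cHH_g[0])$ as a permutation $\QQ[G_g]$-module and then decompose it along $G_g$-orbits of boundary components. First I would fix the group action. The quotient presentations $\cHH_g^c=G_g\backslash\cHH_g^c[0]$ and $\cHH_g=G_g\backslash\cHH_g[0]$ recorded above say in particular that $G_g$ acts on $\cHH_g^c[0]$ by biholomorphisms preserving the open submanifold $\cHH_g[0]$, hence acts on the pair $(\cHH_g^c[0],\cHH_g[0])$; moreover, for the quotient map $\pi\colon\cHH_g^c[0]\to\cHH_g^c$ one has $\cHH_g^{c,red}[0]=\pi^{-1}\bigl(\bigcup_{i=1}^{\lfloor g/2\rfloor}D_i\bigr)$. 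Since biholomorphisms permute irreducible components, $G_g$ acts on the (countably infinite) set $\mathcal C$ of irreducible components of the divisor $\cHH_g^{c,red}[0]$.

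Next I would invoke the identification already recorded above --- that $H_2(\cHH_g^c[0],\cHH_g[0])$ is freely generated as a $\QQ$-vector space by $\mathcal C$ --- and observe that this identification is natural in the pair (it comes from the tubular-neighborhood, i.e.\ Thom/Gysin, description of the pair near the smooth divisor, which is functorial with respect to biholomorphisms preserving the divisor), hence $G_g$-equivariant. Thus, as a $G_g$-module, $H_2(\cHH_g^c[0],\cHH_g[0])\cong\QQ[\mathcal C]$, the permutation module on the $G_g$-set $\mathcal C$.

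Then I would split $\mathcal C$ according to which boundary stratum a component maps onto. For $Z\in\mathcal C$, the image $\pi(Z)$ is an irreducible analytic subvariety of $\cHH_g^c$ of codimension one contained in $\bigcup_i D_i$; since the $D_i$ are exactly the codimension-one irreducible components of the boundary, a dimension count forces $\pi(Z)=D_i$ for a unique $i$. Because $\pi$ is $G_g$-invariant this produces a $G_g$-invariant partition $\mathcal C=\coprod_{i=1}^{\lfloor g/2\rfloor}\mathcal C_i$ with $\mathcal C_i=\{Z\in\mathcal C:\pi(Z)=D_i\}$, and each $\mathcal C_i\neq\varnothing$ since $\pi$ is surjective and $D_i\neq\varnothing$. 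Setting $M_i:=\QQ[\mathcal C_i]$ gives a $G_g$-module decomposition $H_2(\cHH_g^c[0],\cHH_g[0])\cong\bigoplus_{i=1}^{\lfloor g/2\rfloor}M_i$ with each $M_i\neq0$.

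Finally, for each $i$ I would decompose the $G_g$-set $\mathcal C_i$ into $G_g$-orbits; choosing a component $Z$ in an orbit $O$ and putting $H_O:=\mathrm{Stab}_{G_g}(Z)$ yields a $G_g$-equivariant bijection $O\cong G_g/H_O$, whence $\QQ[O]\cong\Ind^{G_g}_{H_O}\QQ$. Summing over the (non-empty) set of orbits of $\mathcal C_i$ exhibits $M_i\cong\bigoplus_O\Ind^{G_g}_{H_O}\QQ$ as a non-empty direct sum of induced representations of the claimed form. The only step that is not completely formal is the $G_g$-equivariance of the combinatorial description of $H_2$ of the pair; once that functoriality is granted, the rest is the orbit--stabilizer theorem together with the bookkeeping of which components lie over which $D_i$. (I would also remark that the substance of the proposition --- as opposed to its statement --- rests on the fact, established elsewhere in the paper, that each $\mathcal C_i$ is infinite, which is what forces each $M_i$ to be infinite-dimensional.)
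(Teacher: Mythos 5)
Your proposal is correct and follows essentially the same route as the paper, which justifies the proposition by the preceding observation that $H_2(\cHH_g^c[0],\cHH_g[0])$ is the permutation module on the irreducible components of $\cHH_g^{c,red}[0]$, sorted by which boundary divisor $D_i$ they lie over and then decomposed into $G_g$-orbits via orbit--stabilizer. Your added details (equivariance of the identification, the dimension count forcing $\pi(Z)=D_i$ for a unique $i$) only make explicit what the paper leaves implicit.
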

\begin{corollary}
For each $g\geq 2$ we have $\emph{dim}_{\QQ}H_2(\cHH_g^c[0],\cHH_g[0])_{G_g} \geq \floor*{\frac{g}{2}}$.
\end{corollary}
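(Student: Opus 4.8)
The plan is to push the decomposition supplied by Proposition \ref{orbits} through the $G_g$-coinvariants functor, using that this functor interacts well with both direct sums and induction. Recall that $G_g$-coinvariants is the functor $V\mapsto V_{G_g}=\QQ\otimes_{\QQ[G_g]}V$. Being a tensor product (hence a colimit), it commutes with arbitrary direct sums, so from the isomorphism
\[
H_2(\cHH_g^c[0],\cHH_g[0])\cong \bigoplus_{j=1}^{\floor*{g/2}} M_j
\]
we obtain immediately
\[
H_2(\cHH_g^c[0],\cHH_g[0])_{G_g}\cong \bigoplus_{j=1}^{\floor*{g/2}} (M_j)_{G_g}.
\]

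Next I would compute $(M_j)_{G_g}$ for each $j$. By Proposition \ref{orbits}, each $M_j$ is a non-empty direct sum of modules of the form $\Ind^{G_g}_{H}\QQ$, so by the same remark it is enough to compute the coinvariants of a single such induced module. By associativity of the tensor product (equivalently, the degree-zero case of Shapiro's Lemma),
\[
\left(\Ind^{G_g}_{H}\QQ\right)_{G_g}=\QQ\otimes_{\QQ[G_g]}\left(\QQ[G_g]\otimes_{\QQ[H]}\QQ\right)\cong \QQ\otimes_{\QQ[H]}\QQ\cong \QQ.
\]
Hence $(M_j)_{G_g}$ is a direct sum of copies of $\QQ$ indexed by the same non-empty index set that indexes the summands of $M_j$, and in particular $\dim_{\QQ}(M_j)_{G_g}\geq 1$. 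Combining the two steps,
\[
\dim_{\QQ}H_2(\cHH_g^c[0],\cHH_g[0])_{G_g}=\sum_{j=1}^{\floor*{g/2}}\dim_{\QQ}(M_j)_{G_g}\geq \floor*{\frac{g}{2}},
\]
which is the assertion.

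There is no serious obstacle here: all of the geometric and combinatorial content is already encapsulated in Proposition \ref{orbits}, and what remains is a formal manipulation. The only point that deserves a moment's care is that $G_g$ is infinite and the relevant subgroups $H$ may be of infinite index, so the modules $\Ind^{G_g}_{H}\QQ$ are infinite-dimensional; one must therefore work with genuine coinvariants and the bona fide (co)homological Shapiro isomorphism rather than any finite-index averaging trick, and in particular one should not substitute invariants for coinvariants, since over an infinite group these need not coincide. Both ingredients are entirely standard, so the corollary drops out at once.
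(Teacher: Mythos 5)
Your argument is correct and is exactly the formal deduction the paper leaves implicit after Proposition \ref{orbits}: coinvariants commute with direct sums, $\left(\Ind^{G_g}_{H}\QQ\right)_{G_g}\cong \QQ_H\cong\QQ$ by the degree-zero case of Shapiro's Lemma, and each of the $\floor*{\frac{g}{2}}$ non-empty summands $M_j$ therefore contributes at least one dimension to the coinvariants. Your caution about the subgroups $H$ having infinite index (so that one must use genuine coinvariants and induction, not invariants or an averaging argument) is well placed, since the relevant stabilizers of components of $\cHH_g^{c,red}[0]$ do have infinite index in $G_g$.
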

%
%
%
%Better explanation of method
Since $\cHH_g^c[0]$ is simply connected, the long exact sequence of homology for the pair $(\cHH_g^c[0], \cHH_g[0])$ has a segment 
\begin{equation}
H_2(\cHH_g^c[0])\rightarrow H_2(\cHH_g^c[0], \cHH_g[0])\rightarrow H_1(\cHH_g[0])\rightarrow 0.
\end{equation}
\begin{proposition}
The map $H_2(\cHH_g^c[0])\rightarrow H_2(\cHH_g^c[0], \cHH_g[0])$ is non-zero. 
\end{proposition}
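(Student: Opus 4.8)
The plan is to argue by contradiction: suppose the map $H_2(\cHH_g^c[0])\rightarrow H_2(\cHH_g^c[0], \cHH_g[0])$ were zero. Then the exact sequence (the displayed segment just above) would collapse to an isomorphism
\begin{equation*}
H_2(\cHH_g^c[0], \cHH_g[0])\cong H_1(\cHH_g[0]).
\end{equation*}
Both of these are naturally $G_g$-modules, and the isomorphism above is $G_g$-equivariant, so it would descend to an isomorphism of $G_g$-coinvariants
\begin{equation*}
H_2(\cHH_g^c[0], \cHH_g[0])_{G_g}\cong H_1(\cHH_g[0])_{G_g}.
\end{equation*}
I would then compute the right-hand side. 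Since $\cHH_g[0]$ is a $K(T\Delta_g,1)$, we have $H_1(\cHH_g[0],\QQ)\cong H_1(T\Delta_g,\QQ)$ as $G_g$-modules, and the $G_g$-action is exactly the conjugation action coming from the extension $1\rightarrow T\Delta_g\rightarrow \Delta_g\rightarrow G_g\rightarrow 1$. By Proposition \ref{coinvariants}, $H_1(T\Delta_g,\QQ)_{G_g}\cong\QQ$, so the right-hand side is one-dimensional.

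On the other hand, the Corollary following Proposition \ref{orbits} gives $\dim_\QQ H_2(\cHH_g^c[0],\cHH_g[0])_{G_g}\geq \lfloor g/2\rfloor$. For $g\geq 4$ this already exceeds $1$, producing the contradiction immediately. The case $g=3$ is the borderline case, where $\lfloor g/2\rfloor = 1$ and the crude dimension count is not enough; here I would need to use the finer structure of Proposition \ref{orbits} more carefully — for instance, tracking that $M_j$ is a \emph{non-empty} sum of induced representations $\mathrm{Ind}^{G_g}_H\QQ$ and that the coinvariants $(\mathrm{Ind}^{G_g}_H\QQ)_{G_g}\cong\QQ$ is one-dimensional per orbit, so that the inequality is genuinely strict whenever the boundary divisor $D_1$ contributes more than one $G_g$-orbit of components. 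Alternatively, for $g=3$ one can combine this with the 5-term sequence for $1\rightarrow T\Delta_g\rightarrow\Delta_g[2]\rightarrow\Sp_g(\ZZ)[2]\rightarrow 1$ and the $(2g+1)g$-dimensional computation in Proposition \ref{coinvariants} by passing to $\Sp_g(\ZZ)[2]$-coinvariants instead of $G_g$-coinvariants; if the analogous lower bound for $H_2(\cHH_g^c[0],\cHH_g[0])_{\Sp_g(\ZZ)[2]}$ (counting $\Sp_g(\ZZ)[2]$-orbits of boundary components, which is larger) beats $g(2g+1)$, the same contradiction goes through.

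The main obstacle I anticipate is precisely this genus-$3$ borderline: making the orbit count on the boundary side strictly larger than the dimension $1$ of $H_1(T\Delta_g,\QQ)_{G_g}$, which requires knowing that $D_1$ breaks into at least two $G_3$-orbits of components in $\cHH_3^c[0]$ (equivalently, that $G_3$ does not act transitively on the set of irreducible components of the preimage of $D_1$). This should follow from the description of a generic point of $D_1$ as a genus $1$ curve glued to a genus $2$ curve at Weierstrass points, together with the fact that the level structure / homology framing distinguishes inequivalent such configurations — but it is a combinatorial point about the $G_g$-action on the boundary components that must be verified rather than quoted. Everything else (the equivariance of the long exact sequence, the identification $H_1(\cHH_g[0])\cong H_1(T\Delta_g)$ as $G_g$-modules, exactness of coinvariants on the relevant short exact sequence) is routine.
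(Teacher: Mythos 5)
For $g\geq 4$ your argument is correct and is essentially the paper's: take $G_g$-coinvariants of the exact sequence, note that the right-hand term $H_1(\cHH_g[0])_{G_g}\cong H_1(T\Delta_g,\QQ)_{G_g}\cong\QQ$ is one-dimensional while the middle term has dimension at least $\floor*{\frac{g}{2}}\geq 2$, and conclude that the image of $H_2(\cHH_g^c[0])_{G_g}$ is non-zero.

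The genuine gap is exactly where you anticipate it, at $g=3$, and you do not close it. Your first proposed fix --- showing that the preimage of $D_1$ breaks into at least two $G_3$-orbits of components --- is doubtful as stated: by A'Campo's theorem $G_3$ surjects onto $S_8\subset\Sp_3(\ZZ/2\ZZ)$, acting as the full symmetric group on the $8$ Weierstrass points, and $S_8$ permutes the $5+3$ partitions transitively, so there is no evident obstruction to $G_3$ acting transitively on the relevant combinatorial types; at the very least transitivity must be ruled out, and you give no argument. Your second suggestion is the route the paper actually takes, but you leave its decisive input unverified. The paper passes to $\Sp_3(\ZZ)[2]$-coinvariants, so that by Proposition \ref{coinvariants} the right-hand term has dimension $g(2g+1)=21$, and then invokes a result of Hain that the irreducible components of $\cHH_3^{c,red}[0]$ fall into exactly $56=\binom{8}{3}$ types, indexed by partitions of the $8$ Weierstrass points into subsets of sizes $5$ and $3$, with components of distinct types never exchanged by $\Sp_3(\ZZ)[2]$. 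Hence $H_2(\cHH_3^c[0],\cHH_3[0])_{\Sp_3(\ZZ)[2]}$ has dimension at least $56>21$, forcing the map to be non-zero. Without this count (or some substitute for it) your proof does not cover genus $3$, which is the lowest genus to which Theorem \ref{theorem1} applies.
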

\begin{proof}
When $g\geq 4$, we take $G_g$-coinvariants in this sequence to obtain another exact sequence
\begin{equation*}
H_2(\cHH_g^c[0])_{G_g}\rightarrow H_2(\cHH_g^c[0], \cHH_g[0])_{G_g}\rightarrow H_1(\cHH_g[0])_{G_g}\rightarrow 0.
\end{equation*}
%better explanation of method
By Propositions and \ref{coinvariants} and \ref{orbits}, the image of $H_2(\cHH_g^c[0])_{G_g}$ in $H_2(\cHH_g^c[0], \cHH_g[0])_{G_g}$ must have dimension at least $\floor*{\frac{g}{2}}-1$. It follows that the image of $H_2(\cHH_g^c[0])$ in $H_2(\cHH_g^c[0], \cHH_g[0])_{G_g}$ is non-zero when $g\geq 4$. When $g = 3$ we consider instead the sequence
\begin{equation*}
H_2(\cHH_3^c[0])_{\Sp_3(\ZZ)[2]}\rightarrow H_2(\cHH_3^c[0], \cHH_3[0])_{\Sp_3(\ZZ)[2]}\rightarrow H_1(\cHH_3[0])_{\Sp_3(\ZZ)[2]}\rightarrow 0.
\end{equation*}
By Proposition \ref{coinvariants}, the rightmost term has dimension 21. 
A recent result of Hain \cite{hainpersonal} implies that the components of $\cHH_3^{c,red}[0]$ fall into exactly 56 types. These types are in bijection with the number of ways of partitioning the 8 Weierstrass points on a hyperelliptic genus 3 curve into subsets of 5 and 3. Two divisors of distinct type are not exchanged by the $\Sp_3(\ZZ)[2]$-action. It follows that $H_2(\cHH_3^c[0], \cHH_3[0])_{\Sp_3(\ZZ)[2]}$ has dimension at least 56 and that the map $H_2(\cHH_3^c[0])_{\Sp_3(\ZZ)[2]}\rightarrow H_2(\cHH_3^c[0], \cHH_3[0])_{\Sp_3(\ZZ)[2]}$ is non-zero.
\end{proof}
\begin{proposition}\label{proporbits}
For each $g\geq 2$, the vector space $H_2(\cHH_g^c[0],\cHH_g[0])$ has the property that the $G_g$-orbit of any irreducible component of $\cHH_g^{c,red}[0]$ spans an infinite-dimensional subspace.
\end{proposition}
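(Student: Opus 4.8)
The plan is to show that a single $G_g$-orbit of an irreducible component $D$ of $\cHH_g^{c,red}[0]$ is already an infinite set, and that the corresponding classes in $H_2(\cHH_g^c[0],\cHH_g[0])$ are linearly independent; independence is automatic here, since by the preceding discussion this relative homology group is \emph{freely} generated by the (distinct) irreducible components of $\cHH_g^{c,red}[0]$, so it suffices to produce infinitely many distinct components in one orbit. Equivalently, I must show that the stabilizer in $G_g$ of $D$ has infinite index. I would first identify the component $D$: it lies over one of the boundary divisors $D_i \subset \cHH_g^c$ corresponding to a partition $g = i + (g-i)$, and a generic point of $D$ parametrizes a nodal curve $C_1 \cup_p C_2$ with $C_j$ smooth hyperelliptic of genus $g_j$ glued at a Weierstrass point, \emph{together with a homology framing}. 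The framing is the key extra datum that the boundary divisor downstairs forgets.

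The second step is to pin down the stabilizer. An element of $G_g \subset \Sp_g(\ZZ)$ fixing $D$ must preserve the splitting $H_1 = H_1(C_1) \oplus H_1(C_2)$ determined by the node, hence lies in (the $G_g$-part of) $\Sp_{g_1}(\ZZ) \times \Sp_{g_2}(\ZZ)$, a subgroup of infinite index in $\Sp_g(\ZZ)$. Since $G_g$ contains $\Sp_g(\ZZ)[2]$ (A'Campo), which is itself of finite index, $G_g$ too has infinite index intersection with any such block-diagonal subgroup — concretely, the symplectic "shear" elements mixing a factor from $H_1(C_1)$ with one from $H_1(C_2)$ give infinitely many cosets, all realized inside $\Sp_g(\ZZ)[2] \subset G_g$ by taking even shears. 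Thus the $G_g$-orbit of $D$ is infinite, giving infinitely many distinct free generators of $H_2(\cHH_g^c[0],\cHH_g[0])$ inside that orbit, which therefore spans an infinite-dimensional subspace. The same argument applies verbatim to \emph{any} irreducible component, since each lies over some $D_i$ and its stabilizer is contained in an analogous block-diagonal subgroup determined by the partition of Weierstrass points induced by its node(s).

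The step I expect to require the most care is verifying that the geometric stabilizer of the component $D$ in $G_g$ genuinely lands in a proper (block-respecting) subgroup — i.e. that the homology framing rigidifies enough that a symplectic transformation moving $D$ to itself must preserve the node-induced decomposition of $H_1$. This uses that $\cHH_g^c[0]$ is a genuine moduli space of framed objects (so that two framed curves of compact type are isomorphic as points of $\cHH_g^c[0]$ only when there is an isomorphism of curves carrying one framing to the other), together with the fact that the dual-graph/partition data is an isomorphism invariant; an element of $G_g$ fixing $D$ acts on framings and must therefore respect the vanishing-cycle/decomposition data, forcing it into the block subgroup. Once that is in hand, the infinitude of the orbit — and hence the infinite-dimensionality of its span — is just the infinite index of a proper parabolic-type subgroup inside a finite-index subgroup of $\Sp_g(\ZZ)$, which follows already from Proposition~\ref{proposition1}-style finiteness considerations applied in reverse, or simply by exhibiting the even shears explicitly.
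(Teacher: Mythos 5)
Your proposal is correct and is in essence the paper's argument: both reduce to the facts that the relative $H_2$ is freely generated by the irreducible components and that the stabilizer of a component sits inside a block subgroup $\Sp_{g_1}(\ZZ)\times\Sp_{g_2}(\ZZ)$ of infinite index, so the $G_g$-orbit (with $G_g$ of finite index in $\Sp_g(\ZZ)$) is infinite. The only cosmetic difference is that the paper establishes the stabilizer/orbit claim by pushing each component forward under the $G_g$-equivariant period map to a unique irreducible component of $\h_g^{red}$, whose infinite $\Sp_g(\ZZ)$-orbit is already known, which neatly sidesteps the "delicate step" you flag about the framing rigidifying the node-induced splitting of $H_1$.
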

\begin{proof}
By the uniqueness of the decomposition of an analytic variety into its irreducible components, every irreducible component $x$ of $\cHH_g^{c,red}[0]$ maps via the period map into a unique irreducible component $\xi$ of $\h_g^{red}$. Since the $\Sp_g(\ZZ)$-orbit of $\xi$ in the set of components of $\h_g^{red}$ is infinite and because $G_g$ is a finite-index subgroup of $\Sp_g(\ZZ)$, it follows that the $G_g$-orbit of $\xi$ is also infinite. Since the period map is $G_g$-equivariant, it follows that the $G_g$-orbit of $x$ in $H_2(\cHH_g^c[0], \cHH_g[0])$ must be infinite-dimensional. 
\end{proof}
\begin{proof}[Proof of Theorem \ref{theorem1}]
It follows from Proposition \ref{proporbits} that the $G_g$-orbit of any non-zero vector $v\in H_2(\cHH_g^c[0],\cHH_g[0])$ spans a subspace of infinite-dimension. Now assume that $v$ lies in the image of the map $H_2(\cHH_g^c[0])\rightarrow H_2(\cHH_g^c[0],\cHH_g[0])$. Since this map is $G_g$-equivariant, it follows that there is a subspace of $H_2(\cHH_g^c[0])$ that maps onto an infinite-dimensional subspace of $H_2(\cHH_g^c[0],\cHH_g[0])$. Thus $H_2(\cHH_g^c[0])$ has infinite dimension.
\end{proof}
\section{The Genus 3 Picture}

The hyperelliptic locus $\h_3^{hyp}$ in $\h_3$ is the image of $\cT_3^{c,hyp}$ under the period map $\cT_3^c\rightarrow \h_3$. The period map restricts to an injective immersion along each component of $\cT_3^{c,hyp}$. Later in this section, we will argue that the image of each component is the zero locus of exactly one of the 36 even genus 3 thetanulls. Each of these is a smooth analytic variety. Thus the period map induces an isomorphism of $\cHH_3^c[0]$ with the zero locus of an even thetanull. 
%sentence added
Our strategy is to exploit this relatively concrete geometric description of $\cHH_3^c[0]$ in order to study its topology, which we do in Section 6.

In this section we study the geometry of the hyperelliptic locus $\h_3^{hyp}$ in $\h_3$. We will in particular focus on the geometry of the irreducible components of $\h_3^{hyp}$ in order to extract information about their topology, and therefore the topology of $\cHH_3^c[0]$. Many of these results are due to Hain and can be found in the unpublished manuscript \cite{hain2015torelli}. 
%Clarification made here
However, to the best of my knowledge, the proofs of Corollary 5.4 and Lemma 5.6 presented here are original. 

\subsection{\texorpdfstring{The Geometry of $\h_3$.}{The Geometry of h3}}
Let $\h_3^{red}$ denote the locus of period matrices of reducible framed principally polarized abelian 3-folds. It is a closed subvariety of $\h_3$ of codimension 2. The action of $\Sp_3(\ZZ)$ on $\h_3^{red}$ permutes the irreducible components of $\h_3^{red}$ transitively. Each component is a translate of the component $\h_2\times \h_1\subset \h_3$, which is embedded via
\begin{equation*}
(\Omega, \tau)\rightarrow  \left(\begin{array}{cc}\Omega & 0 \\0 & \tau\end{array}\right) = \Omega\oplus \tau.
\end{equation*}
The stabilizer of $\h_2\times \h_1$ in $\Sp_3(\ZZ)$ is equal to $\Sp_2(\ZZ)\times \Sp_1(\ZZ)$. 
\\\\\indent 
The singular locus $\h_3^{red,sing}$ of $\h_3^{red}$ is precisely the locus parametrizing reducible framed ppav's which are a product of three genus 1 curves. It is a closed subvariety of codimension 3. Again the $\Sp_3(\ZZ)$-action permutes the components of $\h_3^{red,sing}$ transitively. Each component is a translate of the component $\h_1^3 = \h_1\times \h_1\times \h_1\subset \h_3$ which is embedded via
\begin{equation*}
(\tau_1, \tau_2, \tau_3)\rightarrow  \left(\begin{array}{ccc}\tau_1 & 0 &0 \\0 & \tau_2&0 \\ 0 & 0& \tau_3\end{array}\right). 
\end{equation*}
The stabilizer of $\h_1\times \h_1\times \h_1$ in $\Sp_3(\ZZ)$ equals $\Sp_1(\ZZ)^3 \rtimes S_3$, where $S_3$ denotes the symmetric group on three letters and $\Sp_1(\ZZ)^3 = \Sp_1(\ZZ)\times \Sp_1(\ZZ)\times \Sp_1(\ZZ)$.
\\\\\indent 
Hain has shown \citep[Lemma 11]{hain2002rational} that if $p\in \h_3$ lies on a component of $\h_3^{red,sing}$, then exactly three components of $\h_3^{red}$ intersect at $p$.  In fact, in a neighborhood of $p$ there are holomorphic coordinates 
\begin{equation*}
\left(\begin{array}{ccc}\tau_1 & z_1 & z_3 \\\ z_1 & \tau_2 & z_3 \\z_2 & z_3 & \tau_3\end{array}\right)
\end{equation*}
for $\h_3$ such that locally around $p$ these components are cut out by the equations
\begin{equation*}
z_1 = z_2 = 0 \ \ \ \ \ \ z_1 = z_3 = 0 \ \ \ \ \ \ z_2 = z_3 = 0.
\end{equation*}

\subsection{The Hyperelliptic Locus and the Period Map}
%%In this section, we summarize some of what is known about the topology of the period map $P^c:\cT_3^c\rightarrow \h_3$. We also give some description of the geometry and topology of the components of $\h_3^{hyp}$. 
%%
%%By the Torelli Theorem, the period map $P:\cT_3\rightarrow \h_3$ is 2-1 onto its image and is ramified along the locus $\cT_3^{hyp}$ of hyperelliptic curves. It turns out that the image of $P^c$ is precisely $\h_3-\h_3^{red}$. 
%
%Each component of $\h_3^{hyp}$ is the zero locus of a unique even thetanull $\vartheta_{\alpha}(-,0)$. With the aid of the heat equation and the Riemann Singularity Theorem, one shows that these components are smooth. Henceforth, the component $Z(\vartheta_{\alpha}(-,0))$ of $\h_3^{hyp}$ shall be denoted by $D_{\alpha}$. The singular locus of $\h_3^{hyp}$ is exactly $\h_3^{red}$. The open subset $D_{\alpha}' = D_{\alpha}-D_{\alpha}\cap \h_3^{red}$ is a $K(T\Delta_3,1)$-space. The topology of the pair $(D_{\alpha},D_{\alpha}')$ is highly intertwined with the group theory of $T\Delta_3$.
%
%
%Brendle-Margalit-Putman showed in \cite{brendle2014generators}, that each $D_{\alpha}$ is simply-connected. 
We recall some facts about the period map from Section \ref{background}. The period map $\cT_3\rightarrow \h_3$ is generically 2-1 onto its image and is branched over the locus of jacobians of smooth hyperelliptic curves. Its image is exactly $\h_3-\h_3^{red}$ \cite{hain2015torelli}. The period map extends to a proper, surjective, holomorphic map %
%%%Removed the P from the period map notation
$\cT_3^c\rightarrow \h_3$, which will also be referred to as the period map, as the meaning will be clear from the context. Note that, though the fibers of $P$ over $\h_3-\h_3^{red}$ are finite, its fibers over $\h_3^{red}$ are 1-dimensional.
\\\\\indent 
Let $\cT_3^{c,hyp}$ denote the hyperelliptic locus in $\cT_3^c$. Standard arguments from deformation theory (c.f. \cite{arbarello2011geometry}, p.211) imply that $\cT_3^{c,hyp}$ is a smooth subvariety of $\cT_3^c$ of dimension 5. We record this as
\begin{lemma}\label{lemma45}
The hyperelliptic locus $\cT_3^{c,hyp}$ is a smooth divisor in $\cT_3^c$.
\end{lemma}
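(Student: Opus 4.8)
The plan is to show that $\cT_3^{c,hyp}$ is locally cut out by a single holomorphic equation and that, together with the smoothness already recorded above, this identifies it as a smooth divisor. First I would recall from deformation theory (Arbarello–Cornalba–Griffiths, as cited) that at a point $[C]\in\cT_3^{c,hyp}$ the deformation space of the hyperelliptic curve $C$ inside the deformation space of $C$ as a curve of compact type has codimension one: the Kodaira–Spencer map identifies the ambient tangent space with $H^1(C,T_C)$ and the tangent space to the hyperelliptic locus with the $\sigma$-invariant part $H^1(C,T_C)^\sigma$, where $\sigma$ is the hyperelliptic involution, and a dimension count ($3g-3 = 6$ versus $2g-1 = 5$ for $g=3$) shows the invariant part has codimension one. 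Since $\cT_3^c$ is a complex manifold of dimension $6$ and $\cT_3^{c,hyp}$ is (by the quoted deformation-theoretic argument) a smooth subvariety of dimension $5$, a smooth subvariety of codimension one in a complex manifold is by definition a smooth divisor, and there is essentially nothing more to prove.

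The one point that deserves care — and which I expect to be the main obstacle — is explaining why $\cT_3^{c,hyp}$ meets the boundary $\cT_3^{c,red}$ transversally, or at least why it remains smooth and of pure codimension one there, so that ``divisor'' is the right word globally and not just on the open dense part $\cHH_3[0]$. At a boundary point the curve $C$ is nodal of compact type, and one must check that the $\sigma$-equivariant deformations still form a hyperplane in the full deformation space, including the smoothing directions of the nodes. Here I would use the local description of $\h_3^{red}$ and $\h_3^{red,sing}$ recalled just above, together with the fact that the extended period map is a proper holomorphic map whose fibers over $\h_3^{red}$ are one-dimensional, to transfer the divisor statement from the concrete geometry in $\h_3$ back to $\cT_3^c$. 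In practice, though, the cleanest route is simply to invoke the deformation-theoretic computation of Arbarello–Cornalba–Griffiths directly at the boundary point, which already handles the nodal case uniformly.

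Concretely, the steps are: (1) quote that $\dim\cT_3^c = 6$ and that $\cT_3^{c,hyp}$ is a smooth $5$-dimensional subvariety (stated in the excerpt, following ACG p.~211); (2) observe that any smooth closed analytic subvariety of pure codimension one in a smooth complex manifold is a smooth divisor — locally it is the zero locus of a single holomorphic function with nonvanishing differential, by the implicit function theorem applied to local coordinates adapted to the submanifold; (3) conclude. Since the smoothness and codimension are already granted by the preceding discussion, the lemma is essentially a repackaging: the phrase ``smooth divisor'' adds only the local defining-equation statement, which is immediate from smoothness plus codimension one. I would therefore keep the proof to two or three sentences, emphasizing that the content lies in the deformation-theoretic input already cited and that the rest is the standard equivalence between smooth codimension-one subvarieties and smooth divisors.
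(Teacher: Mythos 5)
Your proposal is correct and follows essentially the same route as the paper, which likewise simply invokes the deformation-theoretic fact (Arbarello--Cornalba--Griffiths, p.~211) that $\cT_3^{c,hyp}$ is a smooth $5$-dimensional subvariety of the $6$-dimensional manifold $\cT_3^c$ and records the resulting codimension-one statement as the lemma. Your additional care about the boundary and the $\sigma$-invariant part of $H^1(C,T_C)$ is a reasonable elaboration of what the citation is carrying, but it does not change the argument.
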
 
Let $\h_3^{hyp}$ denote the image of $\cT_3^{c,hyp}$ in $\h_3$. It is the closure in $\h_3$ of the locus of period matrices of smooth hyperelliptic curves. 
\begin{lemma}[Hain, \cite{hain2015torelli}]
The hyperelliptic locus $\h_3^{hyp}\subset \h_3$ is the union of the zero loci of the 36 even thetanulls.
\end{lemma}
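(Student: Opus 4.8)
The plan is to prove the equality $\h_3^{hyp} = \bigcup_{\delta\text{ even}} Z_\delta$, where $Z_\delta = \{\Omega\in\h_3 : \vartheta_\delta(\Omega,0)=0\}$, by checking it separately on the closed subvariety $\h_3^{red}$ and on its dense open complement. For the complement I would argue as follows. Since the extended period map $P\colon\cT_3^c\to\h_3$ is proper and $\cT_3^{c,hyp}$ is closed, $\h_3^{hyp}=P(\cT_3^{c,hyp})$ is closed; moreover a point of $\cT_3^c$ lying over $\h_3\setminus\h_3^{red}$ parametrizes a compact-type curve with indecomposable jacobian, hence an irreducible, i.e.\ smooth, curve, and conversely a smooth genus $3$ curve has period matrix in $\h_3\setminus\h_3^{red}$ (Section~\ref{background}). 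Since $\cT_3^{c,hyp}\cap\cT_3=\cT_3^{hyp}$, it follows that $\h_3^{hyp}\cap(\h_3\setminus\h_3^{red})$ is precisely the set of period matrices $\Omega$ of smooth genus $3$ hyperelliptic curves $C$. The genus $3$ input is then the classical equivalence between being hyperelliptic and carrying a vanishing even thetanull: by Riemann's Singularity Theorem as recorded in Section~\ref{background}, $\mathrm{ord}_0\,\vartheta_\delta(\Omega,-)=h^0(\alpha)$ whenever $\Theta_\alpha$ is the zero divisor of $\vartheta_\delta$ and $\alpha$ is the associated theta characteristic, so some even thetanull vanishes at $\Omega$ if and only if $C$ admits an even theta characteristic $\alpha$ with $h^0(\alpha)\ge1$, hence $h^0(\alpha)=2$ by parity (and by Clifford's bound). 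Since $\deg\alpha=g-1=2$ for $g=3$, such an $\alpha$ is a $g^1_2$ and $C$ is hyperelliptic; conversely, if $C$ is hyperelliptic with hyperelliptic class $D$, then $K_C\sim 2D$ (Riemann--Roch), so $D$ is an even theta characteristic with $h^0(D)\ge 2$ and the associated even thetanull vanishes at $\Omega$. This identifies the two loci on $\h_3\setminus\h_3^{red}$.

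Next I would show $\h_3^{red}\subseteq\bigcup_\delta Z_\delta$. On the block-diagonal component $\h_2\times\h_1\subset\h_3$, the product formula for theta functions from Section~\ref{background} gives $\vartheta_{\delta_1\oplus\delta_2}(\Omega_1\oplus\Omega_2,0)=\vartheta_{\delta_1}(\Omega_1,0)\,\vartheta_{\delta_2}(\Omega_2,0)$. Choosing $\delta_1$ to be one of the six odd genus $2$ characteristics and $\delta_2$ the unique odd genus $1$ characteristic makes $\delta_1\oplus\delta_2$ even, since its parity is the sum of the two odd parities, while $\vartheta_{\delta_1}(\Omega_1,0)\equiv 0$ because odd thetanulls vanish identically; hence this even thetanull vanishes on all of $\h_2\times\h_1$. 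Because the $\Sp_3(\ZZ)$-transformation law permutes the even thetanulls up to nowhere-vanishing holomorphic factors, the set $\bigcup_\delta Z_\delta$ is $\Sp_3(\ZZ)$-invariant, and since $\Sp_3(\ZZ)$ acts transitively on the components of $\h_3^{red}$, every component of $\h_3^{red}$ lies in some $Z_\delta$.

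It remains to show $\h_3^{red}\subseteq\h_3^{hyp}$. A point $\Omega_1\oplus\Omega_2$ of $\h_2\times\h_1$ is the period matrix of $\jac(C_1)\times\jac(C_2)$ for a (necessarily hyperelliptic) smooth genus $2$ curve $C_1$ and an elliptic curve $C_2$; gluing $C_1$ to $C_2$ at a Weierstrass point of $C_1$ and a $2$-torsion point of $C_2$ yields a compact-type curve $C=C_1\cup C_2$ on which the two hyperelliptic involutions agree at the node and glue to a hyperelliptic involution, with generalized jacobian $\jac(C_1)\times\jac(C_2)$ and period matrix $\Omega_1\oplus\Omega_2$ with respect to the concatenated symplectic basis. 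Thus $[C;\cF]\in\cT_3^{c,hyp}$ maps to $\Omega_1\oplus\Omega_2$, so $\h_2\times\h_1\subseteq\h_3^{hyp}$, and by $\Sp_3(\ZZ)$-equivariance of $P$ and transitivity on components, $\h_3^{red}\subseteq\h_3^{hyp}$. (Alternatively one may invoke the description of the generic point of the boundary divisor $D_1$ from \citep[p.390]{arbarello2011geometry} together with the closedness of $\h_3^{hyp}$ and the irreducibility of the components of $\h_3^{red}$.)

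Combining the three steps yields $\h_3^{hyp}=\bigcup_{\delta\text{ even}}Z_\delta$, and there are $2^{g-1}(2^g+1)=36$ even characteristics when $g=3$. I expect the boundary analysis to be the main obstacle: the delicate points are checking that every reducible framed ppav in $\h_3$ is genuinely realized as the jacobian of a compact-type hyperelliptic curve carrying the prescribed framing (the gluing-of-involutions argument), and confirming that points of $\cT_3^c$ over $\h_3\setminus\h_3^{red}$ have smooth underlying curves so that the Riemann Singularity Theorem computation applies verbatim. The smooth-locus equivalence itself is classical and, as indicated above, is a short Riemann--Roch argument once the singularity theorem is in hand; note that it is here, and only here, that $g=3$ is genuinely used.
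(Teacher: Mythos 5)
Your proposal is correct and follows essentially the same route as the paper: split into $\h_3-\h_3^{red}$ (where one uses the classical fact that a smooth genus $3$ curve is hyperelliptic if and only if it has a vanishing even theta characteristic) and $\h_3^{red}$ (where one checks both that every point is the period matrix of a framed compact-type hyperelliptic curve and that some even thetanull vanishes there, via the product formula with two odd factors). The only difference is that you supply the Riemann Singularity Theorem/Riemann--Roch derivation of the classical equivalence and the gluing construction explicitly, where the paper simply cites references and asserts these facts.
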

\begin{proof}
The subset $\h_3^{hyp}\cap (\h_3-\h_3^{red})\subset \h_3$ is the union of the zero loci of the even thetanulls $\vartheta_{\delta}(-,0)$ restricted to $\h_3-\h_3^{red}$. This is a consequence of the fact that a smooth curve of genus 3 is hyperelliptic if and only if it possesses a vanishing even theta characteristic, which is necessarily unique, %added missing reference here% 
cf. \citep[Proposition 1.4.6]{nart} and \citep[Proposition 5.3]{hartshorne1977algebraic}. On the other hand, every point of $\h_3^{red}$ is the period matrix of a framed hyperelliptic curve, and
%Since the $\Sp_3(\ZZ)$-action on $\h_3$ transitively permutes the zero loci of the even thetanulls, it follows that 
every element of $\h_3^{red}$ is contained in the zero locus of some even thetanull. 
\end{proof}

It should be noted that $\Sp_3(\ZZ)$ acts transitively on the set of zero loci of the even thetanulls and therefore also on the components of $\h_3^{hyp}$. Thus the filtration
\begin{equation*}
\h_3\supset \h_3^{hyp}\supset \h_3^{red}\supset \h_3^{red,sing}
\end{equation*}
is stable under the action of $\Sp_3(\ZZ)$.
%Combined with the results of the previous section, we see that this action preserves the following natural filtration of $\h_3$:
%\begin{equation}
%\h_3\supset \h_3^{hyp}\supset \h_3^{red}\supset \h_3^{red,sing}.
%\end{equation}
\\\\
\indent Henceforth, we shall denote by $D_{\alpha}$ the sublocus of $\h_3^{hyp}$ which is the zero locus of the even thetanull $\vartheta_{\delta_{\alpha}}(-,0)$. The next result will imply that the $D_{\alpha}$ are precisely the irreducible components of $\h_3^{hyp}$. 
\begin{proposition}[Hain, \cite{hain2015torelli}]
The zero locus $D_{\alpha}$ of $\vartheta_{\delta_{\alpha}}(-,0)$ is non-singular.
\end{proposition}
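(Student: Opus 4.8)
The plan is to show that the zero locus $D_\alpha = \{\vartheta_{\delta_\alpha}(-,0) = 0\}$ is smooth by computing, at each point $\Omega_0 \in D_\alpha$, the rank of the differential of $\vartheta_{\delta_\alpha}(-,0)$ as a function on $\h_3$ and verifying it is nonzero. The key tool is the \emph{heat equation} recorded in Section \ref{background}: for an even thetanull, the first-order partials $\partial \vartheta_{\delta_\alpha}/\partial \Omega_{jk}$ at $z=0$ are, up to the nonzero constant $2\pi i(1+\delta_{jk})$, equal to the second-order $z$-derivatives $\partial^2\vartheta_{\delta_\alpha}/\partial z_j\partial z_k$ evaluated at $z=0$. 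So the differential of the thetanull at $\Omega_0$ vanishes if and only if \emph{all} the second $z$-derivatives of $\vartheta_{\delta_\alpha}(\Omega_0, z)$ vanish at the origin, i.e. the symmetric theta divisor $\Theta_{\delta_\alpha}$ on $A_{\Omega_0}$ has a point of multiplicity $\geq 3$ at $0$ (recall that, the characteristic being even, $\vartheta_{\delta_\alpha}(\Omega_0,\cdot)$ already vanishes to order $\geq 2$ at the origin precisely when the thetanull vanishes). Thus smoothness of $D_\alpha$ reduces to: \emph{no point of $D_\alpha$ corresponds to a ppav whose symmetric theta divisor has a triple point at the origin.}

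Next I would split the argument according to whether $\Omega_0 \in \h_3^{red}$ or not. On the open part $D_\alpha \cap (\h_3 - \h_3^{red})$ the point is a jacobian of a smooth hyperelliptic curve $C$, and $\Theta_{\delta_\alpha}$ is (a translate of) the Riemann theta divisor. By Riemann's Singularity Theorem, quoted at the end of Section \ref{background}, the multiplicity of $\Theta_\alpha$ at the image of $L \in \operatorname{Pic}^{g-1}(C)$ equals $h^0(L)$; for $g=3$ a line bundle of degree $2$ with $h^0 = 2$ is forced to be the hyperelliptic $g^1_2$, which is unique, and $h^0$ of a degree-$2$ bundle is at most $2$. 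Hence the multiplicity at the vanishing even theta characteristic is exactly $2$, never $3$, so the thetanull is a submersion there. For the boundary part, where $\Omega_0 \in \h_3^{red}$, I would use the product decomposition: writing $\Omega_0 = \Omega_1 \oplus \Omega_2$ with $\Omega_1 \in \h_2$, $\Omega_2 \in \h_1$ (or, on the deeper stratum, $\tau_1\oplus\tau_2\oplus\tau_3$), the theta function factors as $\vartheta_{\delta_\alpha}(\Omega_0, z) = \vartheta_{\delta_1}(\Omega_1, z(1))\,\vartheta_{\delta_2}(\Omega_2, z(2))$ by the product formula in Section \ref{background}. The even thetanull on $\h_3$ vanishes at such a point iff one of the factor thetanulls vanishes; since all genus-$1$ thetanulls and the even genus-$2$ thetanulls vanishing at an $\Omega_1$ correspond to curves/ppavs whose theta divisors are smooth or have only ordinary double points, one reads off from the factorization that the product theta divisor on $A_{\Omega_0}$ has multiplicity exactly $2$ at the origin, again giving a nonvanishing first-order partial and hence a submersion. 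One also has to note $\Omega_0$ cannot lie on \emph{two} of the coordinate hyperplanes simultaneously in a way that pushes the order up to $3$, but the explicit local coordinates for $\h_3^{red}$ near $\h_3^{red,sing}$ from Hain's lemma quoted above let one check the relevant vanishing orders directly.

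The main obstacle I expect is the boundary analysis on the reducible locus: one must verify that for \emph{every} even characteristic $\delta_\alpha$ and \emph{every} point of $\h_3^{red}$ (including the triple-product stratum $\h_3^{red,sing}$) lying on $D_\alpha$, the factorization genuinely produces a theta divisor with an ordinary double point rather than a triple point — equivalently, that the even genus-$1$ and genus-$2$ thetanulls appearing as factors never vanish to order higher than forced. In genus $1$ this is classical (the theta divisor is a single point), and in genus $2$ it follows because the even genus-$2$ thetanulls cut out the reducible locus $\h_2^{red}$, whose theta divisors are products of genus-$1$ theta divisors and so again have only ordinary double points. Assembling these low-genus facts carefully, combined with the $\Sp_3(\ZZ)$-equivariance (which lets one reduce to the standardly-embedded $\h_2 \times \h_1$ and $\h_1^3$) and the heat-equation reduction, gives the nonsingularity of $D_\alpha$ on all of $\h_3^{hyp}$.
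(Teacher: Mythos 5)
Your proposal is correct and follows essentially the same route as the paper: the heat equation converts smoothness of $D_{\alpha}$ into the statement that $\vartheta_{\delta_{\alpha}}(\Omega,\cdot)$ vanishes to order exactly $2$ at $z=0$, which is verified on $\h_3-\h_3^{red}$ via Riemann's Singularity Theorem ($h^0(\alpha)=2$) and on $\h_3^{red}$ (including $\h_3^{red,\mathrm{sing}}$) via the factorization of the theta function and the parity/vanishing analysis of the genus $1$ and $2$ factors. The only cosmetic difference is that the paper organizes the reducible case into two explicit subcases ($\Omega_0\in\h_2-\h_2^{red}$ with both factor characteristics odd, and $\Omega\in\h_1^3$ with exactly two odd genus-$1$ factors), which is the same bookkeeping you describe.
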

\begin{proof}
This is a straightforward application of the heat equation for theta functions and the Riemann Singularity Theorem.  Suppose first that $\Omega\in D_{\alpha}\cap (\h_3-\h_3^{red})$. Then $\Omega$ is the period matrix of a non-singular framed hyperelliptic curve $C$ and the theta divisor $\Theta_{\alpha}$ satisfies
\begin{equation*}
\text{mult}_0\Theta_{\alpha} = h^0(\alpha) = 2.
\end{equation*}
By the heat equation, at least one of the numbers
\begin{equation*}
 \frac{\partial^2 \vartheta_{\delta_{\alpha}}}{\partial z_j\partial z_k}(\Omega,0)
 = 2\pi i(1+\delta_{jk}) \frac{\partial \vartheta_{\delta_{\alpha}}}{\partial \Omega_{jk}}(\Omega,0)
\end{equation*}
is non-zero. Thus $D_{\alpha}$ is non-singular at $\Omega$. 
%%%%%%%%%%%%%%%%%%%%%%%%%%%%%%%%%

$\hspace{.1in}$ If $\Omega\in D_{\alpha}\cap \h_3^{red}$, the situation is somewhat more complicated. By the transformation law for thetanulls, we may assume that $\Omega = \Omega_0\oplus \tau_0\in \h_2\times \h_1$ and that $\vartheta_{\delta_{\alpha}}(\Omega,0) = 0$. We will argue that $\vartheta_{\delta_{\alpha}}(-,0)$ vanishes to first order at $\Omega$. There are two cases to consider. The first is that $\Omega_0\in \h_2-\h_2^{red}$ and the second is that $\Omega_0\in \h_2^{red}$. Assume that $\Omega_0\in \h_2-\h_2^{red}$.
Since $\Omega$ is in block form, we have 
$$\vartheta_{\delta_{\alpha}}(\Omega,z) = \vartheta_{\delta_{\alpha_1}}(\Omega_0,z')\vartheta_{\delta_{\alpha_2}}(\tau_0,z'')$$
where $z' = (z_1,z_2)$ and $z'' = z_3$.
Since $\delta_{\alpha}$ is even and vanishing, it must be the case that $\delta_{\alpha_1}$ and $\delta_{\alpha_2}$ are either both even or both odd and that at least one is vanishing. 
If $\delta_{\alpha_1}$ and $\delta_{\alpha_2}$ are both even, then neither $\vartheta_{\delta_{\alpha_1}}(\Omega_0,z')$ nor $\vartheta_{\delta{\alpha_2}}(\tau_0,z'')$ vanish at $0$. 
It follows that $\delta_{\alpha_1}$ and $\delta_{\alpha_2}$ must both be odd. 
Since odd thetanulls vanish identically, it follows that $\vartheta_{\delta_{\alpha_1}}(\Omega_0,z')$ and $\vartheta_{\delta{\alpha_2}}(\tau_0,z'')$ both vanish at $0$. 
Since the theta divisor on the jacobian of a curve of genus 1 or 2 is smooth, it follows that both of these functions vanish to first order at $0$. Thus $\text{ord}_0\Theta_{\alpha} = 2$, and by the heat equation $D_{\alpha}$ is smooth at $\Omega$.

$\hspace{.25in}$The second case to consider is that in which $\Omega\in \h_1^3$. As above, we are free to assume that $\Omega = \tau_1\oplus\tau_2\oplus \tau_3$ and that $\vartheta_{\delta_{\alpha}}(\Omega,0) = 0$. Under this assumption, we have
\begin{equation*}
\vartheta_{\delta{\alpha}}(\Omega,z) = \vartheta_{\delta_{\alpha_1}}(\tau_1,z_1)\vartheta_{\delta_{\alpha_2}}(\tau_2,z_2)\vartheta_{\delta_{\alpha_3}}(\tau_3,z_3).
\end{equation*}
Since $\delta_{\alpha}$ is even, then either all three of the $\delta_{\alpha_j}$ are even or two are odd and one is even. In the first case, none of the factors vanishes at 0. So it must be that two of the $\alpha_j$ are odd and one is even. Thus it follows that $\vartheta_{\delta_{\alpha}}(\Omega,z)$ vanishes to second order at $z = 0$. Again by the heat equation, $D_{\alpha}$ is smooth at $\Omega$.
\end{proof}
We are now in a position to describe the singular locus  $\h_3^{hyp,sing}$ of $\h_3^{hyp}$. 
\begin{corollary}[Hain, \cite{hain2015torelli}]
The singular locus $\h_3^{hyp,sing}$ of $\h_3^{hyp}$ is equal to $\h_3^{red}$.
\end{corollary}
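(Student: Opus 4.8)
The plan is to use the fact, established in the preceding proposition, that $\h_3^{hyp}$ is the union of the $36$ smooth divisors $D_\alpha$, and to identify its singular locus with the set of points lying on at least two of them. The underlying elementary observation is that if $Y$ is a complex manifold and $Y=\bigcup_i X_i$ is a finite union of smooth hypersurfaces, then $Y$ is singular at a point $p$ precisely when $p$ lies on two distinct $X_i$: a smooth point has an irreducible germ, whereas at a point of $X_i\cap X_j$ with $X_i\neq X_j$ the germ of the union is reducible, since near $p$ neither $X_i$ nor $X_j$ is contained in the other. Applying this to $\h_3^{hyp}=\bigcup_\alpha D_\alpha$, it suffices to show that $\Omega\in\h_3$ lies on at least two of the $D_\alpha$ if and only if $\Omega\in\h_3^{red}$.

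For the inclusion $\h_3^{hyp,sing}\subseteq\h_3^{red}$: if $\Omega\in\h_3^{hyp}-\h_3^{red}$, then, the image of the period map being $\h_3-\h_3^{red}$, the matrix $\Omega$ is the period matrix of a smooth framed curve $C$ of genus $3$, and membership of $\Omega$ in some $D_\alpha$ forces $C$ to carry a vanishing even thetanull, hence to be hyperelliptic. A smooth hyperelliptic curve of genus $3$ has a \emph{unique} vanishing even theta characteristic, namely the class of the hyperelliptic pencil (whose double is the canonical class); equivalently, $\Omega$ lies on exactly one $D_\alpha$. By the elementary observation above, $\Omega$ is then a smooth point of $\h_3^{hyp}$, so $\h_3^{hyp,sing}$ is disjoint from $\h_3^{hyp}-\h_3^{red}$.

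For the reverse inclusion $\h_3^{red}\subseteq\h_3^{hyp,sing}$, I would show that every point of $\h_3^{red}$ lies on at least two $D_\alpha$. Since the filtration $\h_3\supset\h_3^{hyp}\supset\h_3^{red}\supset\h_3^{red,sing}$ and the set $\{D_\alpha\}$ of even-thetanull loci are all stable under $\Sp_3(\ZZ)$, and since $\Sp_3(\ZZ)$ acts transitively on the irreducible components of $\h_3^{red}$ and of $\h_3^{red,sing}$, it is enough to treat the two representative cases $\Omega=\Omega_0\oplus\tau_0$ with $\Omega_0\in\h_2-\h_2^{red}$ (the case $\Omega_0\in\h_2^{red}$ being absorbed into the next), and $\Omega=\tau_1\oplus\tau_2\oplus\tau_3\in\h_1^3$. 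In each case, factoring the thetanull exactly as in the proof of the previous proposition, $\Omega\in D_\alpha$ if and only if the corresponding block decomposition $\delta_\alpha=\delta_{\alpha_1}\oplus\delta_{\alpha_2}$, respectively $\delta_{\alpha_1}\oplus\delta_{\alpha_2}\oplus\delta_{\alpha_3}$, has both factors odd, respectively exactly two odd factors; these are the only configurations in which the relevant factors vanish identically, using that even genus-$1$ thetanulls are nowhere zero and that a smooth genus-$2$ curve has no vanishing even thetanull. Since parity is additive under $\oplus$, there are $6$ even characteristics of the first type and $9$ of the second, in particular at least two in either case, so every point of $\h_3^{red}$ lies on at least two $D_\alpha$ and is therefore singular in $\h_3^{hyp}$.

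The main obstacle here is not a hard estimate but the clean handling of the two ingredients: verifying the germ-irreducibility criterion for unions of smooth hypersurfaces, and—on the geometric side—invoking the uniqueness of the vanishing even theta characteristic of a hyperelliptic genus-$3$ curve, which is precisely what makes the generic points of $\h_3^{hyp}$ smooth. Everything else reduces, via $\Sp_3(\ZZ)$-equivariance and the parity bookkeeping of Section \ref{background}, to the thetanull factorizations already used in the previous proposition.
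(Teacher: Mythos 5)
Your proposal is correct and follows essentially the same route as the paper: smoothness of each $D_\alpha$ reduces the question to locating points on at least two of them; uniqueness of the vanishing even theta characteristic on a smooth hyperelliptic genus $3$ curve gives $\h_3^{hyp,sing}\subseteq\h_3^{red}$; and the thetanull factorization over $\h_2\times\h_1$ (with odd factors vanishing identically) gives the reverse inclusion. The paper handles the reverse inclusion with the single case $p=\Omega_0\oplus\tau_0\in\h_2\times\h_1$, since the six choices of odd genus-$2$ characteristic already produce at least two even thetanulls vanishing there regardless of whether $\Omega_0$ is reducible; your separate treatment of $\h_1^3$ is harmless but not needed.
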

\begin{proof}
Because the components of $\h_3^{hyp}$ are all smooth, the singular locus of $\h_3^{hyp}$ consists of those points $p$ which lie in some intersection $D_{\alpha}\cap D_{\beta}$ where $\alpha\neq \beta$. 

Since a smooth hyperelliptic curve $C$ has a unique vanishing even theta characteristic, any period matrix for $C$ lies in exactly one of the $D_{\alpha}$. Thus, any point $p\in D_{\alpha}\cap D_{\beta}$ must be the period matrix of a singular curve and $p\in \h_3^{red}$. 

On the other hand, if $p\in \h_3^{red}$ then $p$ is contained in the intersection of two distinct components of $\h_3^{hyp}$. In order to demonstrate this, we may first assume that $p\in \h_2\times \h_1$. Then $p = \Omega_0\oplus \tau_0$ for some $\Omega_0\in \h_2$ and $\tau_0\in \h_1$. Then for any choice of odd characteristic $\delta_1$ of genus 2, the characteristic $\delta = \delta_1\oplus (1/2,1/2)$ is even and 
\begin{equation*}
\vartheta_{\delta}(\Omega_0\oplus \tau_0,0) = \vartheta_{\delta_1}(\Omega_0,0)\vartheta_{(1/2,1/2)}(\tau_0,0) = 0\cdot 0 = 0.
\end{equation*}
This finishes the proof.
\end{proof}
\indent  Define $D_{\alpha}^{red}: = D_{\alpha}\cap \h_3^{red}$. It is a closed analytic subset of $\h_3$. We will need to make use of the following lemma in later sections. 
\begin{lemma}
The locus $D_{\alpha}^{red}\subset D_{\alpha}$ is a union of irreducible components of $\h_3^{red}$. 
\end{lemma}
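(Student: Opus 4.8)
The plan is to show that $D_\alpha^{red} = D_\alpha \cap \h_3^{red}$ is a \emph{union} of whole irreducible components of $\h_3^{red}$ (rather than, say, a codimension-one analytic subset of some component) by analyzing the local structure of $D_\alpha$ along $\h_3^{red}$, using the dimension count together with the smoothness of $D_\alpha$ established in the preceding proposition. Recall that $\h_3^{red}$ has pure codimension $2$ in $\h_3$, each of its components being an $\Sp_3(\ZZ)$-translate of $\h_2 \times \h_1$, so each component has dimension $5$; meanwhile $D_\alpha$ is a smooth divisor, hence of pure dimension $5$ as well. So the statement amounts to: every irreducible component of $\h_3^{red}$ meeting $D_\alpha$ is entirely contained in $D_\alpha$, and $D_\alpha^{red}$ is exactly the union of such components.

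First I would reduce, via the $\Sp_3(\ZZ)$-transformation law for thetanulls (which permutes the $D_\alpha$ and preserves $\h_3^{red}$ and its components), to checking the claim in a neighborhood of a point $p$ lying on the standard component $\h_2 \times \h_1$, writing $p = \Omega_0 \oplus \tau_0$. Next I would split into the two cases already isolated in the smoothness proof: either $\Omega_0 \in \h_2 - \h_2^{red}$ (so $p$ lies on a single component of $\h_3^{red}$), or $\Omega_0 \in \h_2^{red}$ (so $p \in \h_1^3$ lies on the singular locus, where three components of $\h_3^{red}$ meet). In the first case, using the product formula $\vartheta_{\delta_\alpha}(\Omega_0 \oplus \tau_0, z) = \vartheta_{\delta_{\alpha_1}}(\Omega_0, z')\,\vartheta_{\delta_{\alpha_2}}(\tau_0, z'')$ and the fact (from the smoothness proof) that $p \in D_\alpha$ forces $\delta_{\alpha_1}, \delta_{\alpha_2}$ both odd, one sees that $\vartheta_{\delta_{\alpha_1}}(\Omega_0,0) = 0$ and $\vartheta_{\delta_{\alpha_2}}(\tau_0,0) = 0$ hold \emph{identically} in $\Omega_0 \in \h_2$ and $\tau_0 \in \h_1$ (odd thetanulls vanish identically), so the \emph{entire} translate of $\h_2\times\h_1$ through $p$ lies in $D_\alpha$. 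In the second case the same argument with the triple product formula shows the relevant component of $\h_1^3$ lies in $D_\alpha$; combined with the first case applied at nearby points, one gets that all three components of $\h_3^{red}$ through $p$ are contained in $D_\alpha$ whenever $p$ itself is.

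Having shown that $D_\alpha$ contains every component of $\h_3^{red}$ that it meets, I would conclude: $D_\alpha^{red}$ is a closed analytic subset of $\h_3$ which is a union of irreducible components of $\h_3^{red}$, namely precisely those components $\xi$ for which the generic point of $\xi$ (equivalently, any point of $\xi$) satisfies the vanishing parity condition on the induced characteristics $\delta_{\alpha_1} \oplus \delta_{\alpha_2}$ (resp.\ $\delta_{\alpha_1}\oplus\delta_{\alpha_2}\oplus\delta_{\alpha_3}$) forced by evenness and vanishing of $\delta_\alpha$. The main obstacle I anticipate is bookkeeping at the deepest stratum $\h_1^3$: one must verify that containment of $D_\alpha$ in one component of $\h_3^{red}$ near a generic point of that component propagates correctly to all three components meeting at a point of $\h_1^3$, and that no spurious lower-dimensional pieces of $D_\alpha^{red}$ arise off of $\h_3^{red}$'s components — but this is controlled by the purity of dimension ($\dim D_\alpha = \dim(\text{component of }\h_3^{red}) = 5$) together with the identical vanishing of odd thetanulls, which upgrades "vanishes at $p$" to "vanishes along the whole component."
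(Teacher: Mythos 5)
There is a genuine error in your proposal. You assert that the lemma "amounts to: every irreducible component of $\h_3^{red}$ meeting $D_\alpha$ is entirely contained in $D_\alpha$," and in your second case you claim that "all three components of $\h_3^{red}$ through $p$ are contained in $D_\alpha$ whenever $p$ itself is." This is false, and the paper proves the opposite (Corollary 5.10): at a point $p\in D_\alpha\cap\h_3^{red,sing}$, exactly \emph{two} of the three components of $\h_3^{red}$ through $p$ lie in $D_\alpha$. Concretely, if $p\in\h_1^3$ and $\delta_\alpha=\delta_{\alpha_1}\oplus\delta_{\alpha_2}\oplus\delta_{\alpha_3}$ with $\delta_{\alpha_1},\delta_{\alpha_3}$ odd and $\delta_{\alpha_2}$ even, then on the third component (the one grouping indices $1$ and $3$ into a genus-$2$ block) the factorization is $\vartheta_{\delta_{\alpha_2}}(\tau,0)\,\vartheta_{\delta_{\alpha_1}\oplus\delta_{\alpha_3}}(\Omega,0)$, a product of an even genus-$1$ thetanull (nowhere zero) and an even genus-$2$ thetanull (not identically zero); so $D_\alpha$ meets that component only in a proper analytic subset. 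Thus "$D_\alpha$ meets a component" does not upgrade to "$D_\alpha$ contains it," and your deduction of the lemma from that premise does not go through as written. Your argument is salvageable: every point of $D_\alpha^{red}$ off $\h_3^{red,sing}$ lies on a unique component of $\h_3^{red}$, which your first case correctly shows is contained in $D_\alpha$ (both induced characteristics are forced to be odd, and odd thetanulls vanish identically); and every point of $D_\alpha^{red}\cap\h_3^{red,sing}$ lies on at least two components contained in $D_\alpha$ by the computation in the paper's Lemma 5.9. That already gives $D_\alpha^{red}\subseteq\bigcup\{\text{components of }\h_3^{red}\text{ contained in }D_\alpha\}\subseteq D_\alpha^{red}$, which is the lemma — but you must drop the "all three" claim.

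Two further remarks. First, your dimension count is off: $\dim\h_3=6$, so a component of $\h_3^{red}$ (a translate of $\h_2\times\h_1$) has dimension $4$, not $5$; $D_\alpha$ has dimension $5$, so these components are divisors in $D_\alpha$, and the closing "purity of dimension" remark needs this correction to make sense. Second, the paper's own proof is entirely different and much shorter: since $\h_3^{red}\cap D_\alpha=\bigcup_{\beta\neq\alpha}D_\alpha\cap D_\beta$, each component of $D_\alpha^{red}$ is a component of the zero divisor of some $\vartheta_{\delta_\beta}(-,0)$ restricted to $D_\alpha$, hence is an irreducible closed analytic set of pure dimension $4$ sitting inside $\h_3^{red}$, whose irreducible components are also $4$-dimensional; so it must equal one of them. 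That argument avoids all the characteristic bookkeeping, at the cost of invoking the preceding identification of $\h_3^{hyp,sing}$ with $\h_3^{red}$; your explicit computation, once repaired, has the merit of identifying \emph{which} components of $\h_3^{red}$ occur.
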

\begin{proof}
Each component of $D_{\alpha}^{red}$ is a component of the zero locus of the restriction of a thetanull $\vartheta_{\delta_{\beta}}(-,0)$ to $D_{\alpha}$, where $\alpha\neq \beta$. Thus each component of $D_{\alpha}^{red}$ is a closed analytic subset of $\h_3$ of pure dimension 4. Let $X$ denote any one of these components. Then $X$ is contained in $\h_3^{red}$, the irreducible components of which are also 4-dimensional. It follows that $X$ is a component of $\h_3^{red}$.
\end{proof}

%Regarding $D_{\alpha}^{red}$ as a divisor in $D_{\alpha}$, it follows from Hain's description of the components of $\h_3^{red}$ 
%that if two components of $D_{\alpha}^{red}$ meet, then they do so transversely. We will later show that, in fact, $D_{\alpha}^{red}$ is a divisor with simple normal crossings in $D_{\alpha}$. This will have important implications for computation of the homology groups via the PL Gysin sequence. 
%Before we can prove this, 
\subsection{\texorpdfstring{The Combinatorics of $\h_3^{hyp}$}{The Combinatorics of h3hyp}}
We will need to collect several facts which describe the combinatorics of the ways in which the components of $\h_3^{hyp}$ intersect eachother. 

\begin{lemma}[Hain, \cite{hain2015torelli}]
\begin{enumerate}
\item[]
\item If $p\in \h_3^{red}-\h_3^{red,sing}$, then exactly six components of $\h_3^{hyp}$ intersect at $p$. These six components meet pairwise transversely.
\item If $p\in \h_3^{red,sing}$, then exactly nine components of $\h_3^{hyp}$ intersect at $p$.
\end{enumerate}
\end{lemma}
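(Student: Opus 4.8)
The plan is to reduce to a block-diagonal period matrix using the $\Sp_3(\ZZ)$-transformation law for thetanulls, and then to count the vanishing even thetanulls at $p$ by means of the product formula for theta functions of a block matrix. Throughout I use that the components of $\h_3^{hyp}$ are exactly the $36$ zero loci $D_\alpha$, that $D_\alpha\neq D_\beta$ when $\alpha\neq\beta$ (by the preceding Corollary they can agree only inside $\h_3^{red}$, while each $D_\alpha$ contains period matrices of smooth hyperelliptic curves), and that $\Sp_3(\ZZ)$ permutes the $36$ even characteristics transitively and compatibly with its action on $\h_3$. Hence the number of components of $\h_3^{hyp}$ through a point $p$ equals $\#\{\delta\ \text{even}:\vartheta_\delta(p,0)=0\}$, which is $\Sp_3(\ZZ)$-invariant. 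So for (1) I may take $p=\Omega_0\oplus\tau_0$ with $\Omega_0\in\h_2-\h_2^{red}$ and $\tau_0\in\h_1$ (if $\Omega_0$ were reducible then $p\in\h_3^{red,sing}$), and for (2) I may take $p=\tau_1\oplus\tau_2\oplus\tau_3$ with $\tau_i\in\h_1$.

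Next I would run the counts with the product formula. In case (1), write an even $\delta=\delta_1\oplus\delta_2$ with $\delta_1$ of genus $2$ and $\delta_2$ of genus $1$; then $\vartheta_\delta(p,0)=\vartheta_{\delta_1}(\Omega_0,0)\,\vartheta_{\delta_2}(\tau_0,0)$, and $\delta$ being even forces $\delta_1,\delta_2$ to have equal parity. Since even thetanulls of genus $\le 2$ are nonvanishing (the fact already used to show each $D_\alpha$ is smooth) while odd thetanulls vanish identically, $\vartheta_\delta(p,0)=0$ iff $\delta_1,\delta_2$ are both odd; with $6$ odd genus-$2$ and $1$ odd genus-$1$ characteristics this gives $6$ components through $p$. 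In case (2), write $\delta=\delta_1\oplus\delta_2\oplus\delta_3$ with each $\delta_i$ of genus $1$; then $\vartheta_\delta(p,0)=\prod_i\vartheta_{\delta_i}(\tau_i,0)$ vanishes iff some $\delta_i$ is odd, while $\delta$ is even iff an even number of the $\delta_i$ are odd; so $\vartheta_\delta(p,0)=0$ and $\delta$ even iff exactly two of the $\delta_i$ are odd. That leaves $\binom{3}{2}=3$ choices of which two positions are odd and $3$ choices for the remaining even $\delta_i$, hence $9$ components through $p$.

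The hard part will be the transversality assertion in (1). I would work in the local coordinates $(\omega_{11},\omega_{12},\omega_{22},z_1,z_2,\tau)$ on $\h_3$ near $p$, in which the period matrix reads $\left(\begin{smallmatrix}\omega_{11}&\omega_{12}&z_1\\ \omega_{12}&\omega_{22}&z_2\\ z_1&z_2&\tau\end{smallmatrix}\right)$ and $\h_3^{red}$ is locally cut out by $z_1=z_2=0$. Moving along the $\omega$- and $\tau$-directions keeps the matrix block-diagonal, so by the product formula the partials $\partial\vartheta_\delta(-,0)/\partial\omega_{jk}$ and $\partial\vartheta_\delta(-,0)/\partial\tau$ vanish at $p$, each carrying a factor $\vartheta_{\delta_1}(\Omega_0,0)=0$ or $\vartheta_{\delta_2}(\tau_0,0)=0$. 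By the heat equation, the partial $\partial\vartheta_\delta(-,0)/\partial z_i$ at $p$ is a nonzero multiple of $\frac{\partial^2\vartheta_\delta}{\partial z_i\,\partial z_3}(p,0)=\partial_{z_i}\vartheta_{\delta_1}(\Omega_0,0)\cdot\partial_{z_3}\vartheta_{\delta_2}(\tau_0,0)$ for $i=1,2$, and the genus-$1$ factor is nonzero because the genus-$1$ theta divisor is a single reduced point. Hence the conormal line of $D_\delta$ at $p$ is spanned by $\partial_{z_1}\vartheta_{\delta_1}(\Omega_0,0)\,dz_1+\partial_{z_2}\vartheta_{\delta_1}(\Omega_0,0)\,dz_2$, i.e. by the gradient at the origin of the odd genus-$2$ thetanull $\vartheta_{\delta_1}$ on $J=\jac(C_0)$, where $C_0$ is the genus-$2$ curve with period matrix $\Omega_0$. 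Two of them, $D_\delta$ and $D_{\delta'}$ with $\delta_1\neq\delta_1'$, therefore meet transversely at $p$ iff these two gradients are linearly independent, equivalently iff the translated theta divisors $\Theta_{\delta_1}$ and $\Theta_{\delta_1'}$ have distinct tangent lines at $0$. By Riemann's Theorem each of these is the Abel--Jacobi image $W_1\cong C_0$, shifted so as to pass through $0$ at the Weierstrass point corresponding to the odd characteristic, and its tangent direction at $0$ is the image of that Weierstrass point under the canonical (= Gauss) map $C_0\to\PP^1$. Distinct odd characteristics correspond to distinct Weierstrass points, and these map to distinct points since they are precisely the branch points of $C_0\to\PP^1$; so the six components meet pairwise transversely.

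In short, everything but the transversality in (1) reduces to bookkeeping with the product formula and the parities of characteristics. The one real obstacle is the transversality step: it forces a computation of the conormal directions of the $D_\delta$ at a reducible point — using the heat equation to trade $\Omega$-derivatives for $z$-derivatives and the product formula on the block-diagonal locus — followed by the recognition that the resulting linear-independence question is exactly the classical statement that the Gauss map of a genus-$2$ theta divisor is the canonical map and separates its Weierstrass points.
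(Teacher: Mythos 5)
Your counting argument for both parts is essentially identical to the paper's: reduce to a block-diagonal period matrix via the $\Sp_3(\ZZ)$-action, apply the product formula for thetanulls, and use the parity constraints to see that the vanishing even characteristics are exactly the (odd, odd) splittings in case (1), giving $6\cdot 1=6$, and the (two odd, one even) splittings in case (2), giving $3\cdot 3=9$. The one substantive difference is that the paper's displayed proof stops at the count and never verifies the pairwise transversality asserted in part (1), whereas you supply it: using the heat equation to trade the off-diagonal $\Omega$-derivatives for $z$-derivatives, you identify the conormal of $D_\delta$ at $p$ with the gradient at the origin of the odd genus-$2$ thetanull $\vartheta_{\delta_1}$, and then reduce linear independence of these conormals to the classical fact that the Gauss map of the genus-$2$ theta divisor is the canonical map $C_0\to \PP^1$ and separates the six Weierstrass points. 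That argument is correct and actually closes a gap the paper leaves open; the rest of your write-up (in particular the observation that distinct even characteristics give distinct components because a smooth hyperelliptic period matrix lies on exactly one $D_\alpha$) is sound.
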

%\newpage
\begin{proof}
\begin{enumerate}
\item[]
\item \hspace{.1in} We may assume that $p = \Omega_0\oplus \tau_0\in (\h_2-\h_2^{red})\times \h_1$. Let $\delta_{\alpha}$ be any even characteristic. First observe that for all $\Omega_1\oplus\Omega_2\in \h_2\times \h_1$ we have
\begin{equation*}
\vartheta_{\delta_{\alpha}}(\Omega_1\oplus\Omega_2,0) = \vartheta_{\delta_{\alpha_1}}(\Omega_1,0)\vartheta_{\delta_{\alpha_2}}(\Omega_2,0)
\end{equation*}
where $\delta_{\alpha_1},\delta_{\alpha_2}$ are characteristics such that $\delta_{\alpha} = \delta_{\alpha_1}\oplus\delta_{\alpha_2}$. Since $\delta_{\alpha}$ is even, it must be the case that either both of the $\delta_{\alpha_j}$ are odd, or that both are even. 

\hspace{.1in} Now suppose that $\vartheta_{\delta_{\alpha}}(\Omega_0\oplus\tau_0,0) = 0$. It must be that both $\delta_{\alpha_1}$ and $\delta_{\alpha_2}$ are odd, since if  both were even then both $\vartheta_{\delta_{\alpha_1}}(\Omega_0,0)$ and $\vartheta_{\delta_{\alpha_2}}(\tau_0,0)$ would be non-zero.
On the other hand, there is exactly one odd characteristic $(1/2,1/2)$ in genus 1 and there are exactly six odd characteristics $\delta_{\alpha_1}$ in genus 2. For any choice of odd characteristic $\delta_{\alpha_1}$, the thetanull $\vartheta_{\delta_{\alpha}}(-,0)$ vanishes at $p$, where $\delta_{\alpha} = \delta_{\alpha_1}\oplus (1/2,1/2)$. It follows that there are precisely six even thetanulls which vanish at $p$. In other words, exactly six components of $\h_3^{hyp}$ intersect at $p$.
\item 
\hspace{.1in} We may assume that $p = \tau_1\oplus\tau_2\oplus \tau_3 \in \h_1^3$. Let $\delta_{\alpha}$ be any even characteristic. For all $\Omega_1\oplus\Omega_2\oplus \Omega_3\in \h_1^3$ we have
\begin{equation*}
\vartheta_{\delta_{\alpha}}(\Omega_1\oplus\Omega_2\oplus\Omega_3,0) = \vartheta_{\delta_{\alpha_1}}(\Omega_1,0)\vartheta_{\delta_{\alpha_2}}(\Omega_2,0)\vartheta_{\delta_{\alpha_3}}(\Omega_3,0)
\end{equation*}
where the $\delta_{\alpha_j}$ are all genus 1 characteristics such that $\delta_{\alpha} = \delta_{\alpha_1}\oplus\delta_{\alpha_2}\oplus\delta_{\alpha_3}$. Since $\delta_{\alpha}$ is even, it must be the case that either all of the $\delta_{\alpha_j}$ are even or that two of the $\delta_{\alpha_j}$ are odd and that the remaining one is even.

\hspace{.1in} Now suppose that $\vartheta_{\delta_{\alpha}}(\Omega_0\oplus\tau_0,0) = 0$. Then it must be that two of the $\delta_{\alpha_j}$ are odd and that the remaining one is even. To see why, notice that if all of the $\delta_{\alpha_j}$ are even, then all of the numbers $\vartheta_{\delta_{\alpha_j}}(\tau_j,0)$ are non-zero. On the other hand, when the genus is 1, there are three even characteristics and one odd characteristic. Given a fixed choice of two odd characteristics $\delta_{\alpha_{j_1}},\delta_{\alpha_{j_2}}$ there are then three choices of even characteristic $\delta_{\alpha_{j_3}}$ which produce an even characteristic $\delta_{\alpha} = \delta_{\alpha_1}\oplus\delta_{\alpha_2}\oplus\delta_{\alpha_3}$.  For any such choices the resulting thetanull $\vartheta_{\delta_{\alpha}}(-,0)$ vanishes at $p$. It follows that exactly nine components of $\h_3^{hyp}$ intersect at $p$.

\end{enumerate}
\end{proof}
Note that if a thetanull $\vartheta_{\delta_{\alpha}}(-,0)$ vanishes at a point $p\in \h_3^{red}-\h_3^{red,sing}$, then it vanishes identically along the component of $\h_3^{red}$ which contains $p$. Via the $\Sp_3(\ZZ)$ symmetry of the filtration on $\h_3$, this is a consequence of the fact that if a thetanull $\vartheta_{\delta}(-,0)$ vanishes at a point $p\in (\h_2-\h_2^{red})\times \h_1$, then $\vartheta_{\delta}(-,0)$ vanishes identically on $\h_2\times \h_1$. This statement is readily verified by explicit computation with thetanulls, much as is done above.
\begin{lemma}
Suppose that $p\in D_{\alpha}\cap \h_3^{red,sing}$. Then $D_{\alpha}$ contains at least two of the three components of $\h_3^{red}$ which intersect at $p$.
\end{lemma}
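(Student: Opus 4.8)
The plan is to normalize $p$ using the $\Sp_3(\ZZ)$-symmetry and then read off the conclusion from a parity count, via the product formula for thetanulls on block-diagonal period matrices together with the fact that odd thetanulls vanish identically.

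First I would reduce to the case $p = \tau_1\oplus\tau_2\oplus\tau_3\in\h_1^3$. Since $\Sp_3(\ZZ)$ preserves the filtration $\h_3\supset\h_3^{hyp}\supset\h_3^{red}\supset\h_3^{red,sing}$, permutes the components $D_\alpha$ of $\h_3^{hyp}$ as well as the components of $\h_3^{red}$, and acts transitively on the components of $\h_3^{red,sing}$, a suitable $\phi\in\Sp_3(\ZZ)$ carries $p$ into $\h_1^3$; by the transformation law for thetanulls, which preserves parity, $\phi$ sends $D_\alpha$ to some $D_{\alpha'}$ with $\delta_{\alpha'}$ still even and sends the three components of $\h_3^{red}$ through $p$ to the three through $\phi(p)$. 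So assume $p = \tau_1\oplus\tau_2\oplus\tau_3$. By \citep[Lemma 11]{hain2002rational} exactly three components of $\h_3^{red}$ pass through $p$, and these are the three copies of $\h_2\times\h_1$ obtained by placing the genus-$2$ block in one of the three principal $2\times 2$ sub-blocks of a period matrix; denote them $C_{12}, C_{13}, C_{23}$, indexed by the slots occupied by the genus-$2$ block, and write $\delta_\alpha = \delta_{\alpha_1}\oplus\delta_{\alpha_2}\oplus\delta_{\alpha_3}$ with each $\delta_{\alpha_j}$ a genus-$1$ characteristic.

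Next, since $\vartheta_{\delta_\alpha}(p,0) = 0$ and the even genus-$1$ thetanulls have no zeros on $\h_1$, the reasoning in the proof of the preceding Lemma forces exactly two of $\delta_{\alpha_1},\delta_{\alpha_2},\delta_{\alpha_3}$ to be odd; say $\delta_{\alpha_1},\delta_{\alpha_2}$ are odd and $\delta_{\alpha_3}$ is even. Restricting $\vartheta_{\delta_\alpha}(-,0)$ to $C_{13}$ and permuting coordinates so that slots $1,3$ come first, the restriction is $(\Omega,\tau)\mapsto\vartheta_{\delta_{\alpha_1}\oplus\delta_{\alpha_3}}(\Omega,0)\,\vartheta_{\delta_{\alpha_2}}(\tau,0)$ on all of $\h_2\times\h_1$. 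Parity is additive under $\oplus$, so $\delta_{\alpha_1}\oplus\delta_{\alpha_3}$ is odd and $\vartheta_{\delta_{\alpha_1}\oplus\delta_{\alpha_3}}(-,0)$ vanishes identically on $\h_2$; hence $\vartheta_{\delta_\alpha}(-,0)$ vanishes identically on $C_{13}$, that is $C_{13}\subseteq D_\alpha$. The same computation with the pair $2,3$ gives $C_{23}\subseteq D_\alpha$, which proves the lemma. (On $C_{12}$ the relevant genus-$2$ sub-characteristic is $\delta_{\alpha_1}\oplus\delta_{\alpha_2}$, which is even and hence not identically vanishing, so the argument gives no information about $C_{12}$; generically $C_{12}\not\subseteq D_\alpha$, so ``at least two'' is best possible.)

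The only step needing real care is the passage from ``$\vartheta_{\delta_\alpha}$ vanishes at the point $p$'' to ``$\vartheta_{\delta_\alpha}$ vanishes identically along a $4$-dimensional component of $\h_3^{red}$'': this is not a pointwise assertion, and it genuinely relies on the vanishing of odd thetanulls, applied to the genus-$2$ sub-characteristics $\delta_{\alpha_1}\oplus\delta_{\alpha_3}$ and $\delta_{\alpha_2}\oplus\delta_{\alpha_3}$, whose oddness comes precisely from pairing the unique even factor $\delta_{\alpha_3}$ with an odd one. The rest is routine bookkeeping with the product formula and with parities.
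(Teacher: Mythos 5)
Your proof is correct and is essentially the paper's own argument: normalize $p$ into $\h_1^3$, write $\delta_\alpha$ as a sum of three genus-$1$ characteristics of which exactly two must be odd, and use the product formula together with the identical vanishing of odd thetanulls to see that the two components of $\h_3^{red}$ pairing the even factor with an odd one lie in $D_\alpha$. The only difference from the paper is the labeling of which slot carries the even characteristic.
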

\begin{proof}
We may assume without loss that $p\in \h_1^3$ and that $\alpha$ is a sum of genus 1 characteristics of the form $\alpha_1\oplus\alpha_2\oplus \alpha_3$ with $\alpha_1,\alpha_3$ odd characteristics and $\alpha_2$ an even characteristic. Then it is enough to show that $D_{\alpha}$ contains both of the components $\h_2\times\h_1$ and $\h_1\times \h_2$ of $\h_3^{red}$. But this is clearly true because for any elements $\Omega_0\in \h_2$ and $\tau_0\in \h_1$, we have
\begin{align*}
\vartheta_{\delta{\alpha}}(\Omega_0\oplus \tau_0, 0) = \vartheta_{\delta_{\alpha_1}\oplus\delta_{\alpha_2}}(\Omega_0,0)\vartheta_{\delta_{\alpha_3}}(\tau_0,0) = 0\\
\vartheta_{\delta_{\alpha}}(\tau_0\oplus\Omega_0, 0) = \vartheta_{\delta_{\alpha_1}}(\tau_0,0)\vartheta_{\delta_{\alpha_2}\oplus\delta_{\alpha_3}}(\Omega_0,0) = 0
\end{align*}
since $\alpha_1$ and $\alpha_3$ are odd.
\end{proof}
\begin{corollary}[Hain, \cite{hain2015torelli}]\label{corollary51}
Let $p\in D_{\alpha}\cap \h_3^{red,sing}$. Then $D_{\alpha}$ contains exactly two of the three components of $\h_3^{red}$ which intersect at $p$ inside of $\h_3$.
\end{corollary}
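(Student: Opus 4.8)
The plan is to build on the preceding lemma, which already shows that $D_\alpha$ contains at least two of the three components of $\h_3^{red}$ meeting at $p$; it then remains only to rule out the possibility that $D_\alpha$ contains all three, and this I would do by a short parity computation of exactly the type already carried out repeatedly in this section.

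First I would pass to the normal form used in the preceding lemmas. Using the $\Sp_3(\ZZ)$-transformation law for thetanulls together with the $\Sp_3(\ZZ)$-stability of the filtration $\h_3\supset\h_3^{hyp}\supset\h_3^{red}\supset\h_3^{red,sing}$, we may assume $p = \tau_1\oplus\tau_2\oplus\tau_3\in\h_1^3$ and, since $\vartheta_{\delta_\alpha}(p,0) = \prod_j\vartheta_{\delta_{\alpha_j}}(\tau_j,0) = 0$ while even genus-$1$ thetanulls are nowhere zero on $\h_1$ and $\delta_\alpha$ is even, that $\delta_\alpha = \delta_{\alpha_1}\oplus\delta_{\alpha_2}\oplus\delta_{\alpha_3}$ with exactly two of the $\delta_{\alpha_j}$ odd, say $\delta_{\alpha_1}$ and $\delta_{\alpha_3}$, and $\delta_{\alpha_2}$ even. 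In these coordinates the three components of $\h_3^{red}$ through $p$ are the three translates of $\h_2\times\h_1$ obtained by grouping the elliptic factors into a pair and a singleton; after the appropriate $S_3\subset\Sp_3(\ZZ)$ reordering of the blocks, grouping the pair $\{i,j\}$ and keeping $k$ separate corresponds to writing $\delta_\alpha$ in the block form $(\delta_{\alpha_i}\oplus\delta_{\alpha_j})\oplus\delta_{\alpha_k}$, the parity of a block sum of characteristics being the sum of the parities (immediate from additivity of $\delta'(\delta'')^T$ over blocks).

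Then I would simply evaluate on each of the three components. For the pairing $\{1,2\}$ the singleton characteristic is $\delta_{\alpha_3}$, which is odd, so $\vartheta_{\delta_{\alpha_3}}(-,0)$ vanishes identically and hence $\vartheta_{\delta_\alpha}(-,0)$ vanishes identically on that component; likewise for the pairing $\{2,3\}$, using that $\delta_{\alpha_1}$ is odd — this is the content of the preceding lemma. For the remaining pairing $\{1,3\}$, however, the genus-$2$ block $\delta_{\alpha_1}\oplus\delta_{\alpha_3}$ has parity $1+1 = 0$, hence is even, so $\vartheta_{\delta_{\alpha_1}\oplus\delta_{\alpha_3}}(-,0)$ is not identically zero on $\h_2$; and the singleton $\delta_{\alpha_2}$ is even, so $\vartheta_{\delta_{\alpha_2}}(-,0)$ is nowhere zero on $\h_1$. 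Thus the restriction of $\vartheta_{\delta_\alpha}(-,0)$ to this component is a product of two thetanulls neither of which is identically zero, so it is not identically zero; hence this component is not contained in $D_\alpha$. Combined with the preceding lemma, $D_\alpha$ contains exactly two of the three components, as claimed. The only step I expect to need real care is the bookkeeping in the second paragraph — making precise the dictionary between the three components through $p$ and the three block decompositions of $\delta_\alpha$, including the effect of the $S_3$-reordering on the characteristic — after which everything is the same product-of-thetanulls computation run one more time.
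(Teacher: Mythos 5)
Your proof is correct, but it takes a genuinely different route from the paper's. The paper deduces the corollary from a pure incidence count: three components of $\h_3^{red}$ and six components of $\h_3^{hyp}$ meet along each local branch, giving $3\cdot 6=18$ incidences at $p$ distributed among the $9$ distinct components of $\h_3^{hyp}$ through $p$; since the preceding lemma forces each of the nine to contain at least two of the three components of $\h_3^{red}$, the count $9\cdot 2=18$ leaves no room for any of them to contain all three. Your argument instead finishes the explicit characteristic computation: with $\delta_\alpha=\delta_{\alpha_1}\oplus\delta_{\alpha_2}\oplus\delta_{\alpha_3}$ having exactly two odd genus-$1$ blocks, the component of $\h_3^{red}$ through $p$ that pairs the two odd blocks has restriction $\vartheta_{\delta_{\alpha_1}\oplus\delta_{\alpha_3}}(-,0)\cdot\vartheta_{\delta_{\alpha_2}}(-,0)$, a product of an even genus-$2$ thetanull and a nowhere-vanishing even genus-$1$ thetanull, hence not identically zero; so that component is excluded, and the other two are already in $D_\alpha$ by the preceding lemma. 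The paper's count is shorter and needs no further theta computation, but it leans on the earlier enumerations ($6$ components at generic points of $\h_3^{red}$, $9$ at points of $\h_3^{red,sing}$) and on the remark that a thetanull vanishing at a generic point of a component of $\h_3^{red}$ vanishes along the whole component. Your computation is more self-contained and yields strictly more information --- it identifies \emph{which} two components lie in $D_\alpha$, namely those whose genus-$1$ singleton characteristic is odd --- at the cost of the $S_3$-bookkeeping relating the three components through $p$ to the three block decompositions of $\delta_\alpha$, which you handle correctly via the additivity of the parity $\delta'(\delta'')^T$ over blocks and the transformation law for the permutation matrices in $\Sp_3(\ZZ)$.
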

\begin{proof}
Three components of $\h_3^{red}$ intersect at any point of $\h_3^{red,sing}$, and 6 components of $\h_3^{hyp}$ intersect at any point of $\h_3^{red}-\h_3^{red,sing}$. This implies that 18 components of $\h_3^{hyp}$ intersect at any point of $\h_3^{red,sing}$. However, as we have shown, only 9 distinct components of $\h_3^{hyp}$ intersect at any point of $\h_3^{red,sing}$. This means that some of the components of $\h_3^{hyp}$ containing $p$ contain more than one of the components of $\h_3^{red}$ that contain $p$. 

Now let $p\in \h_3^{red,sing}$. Then any component of $\h_3^{hyp}$ containing $p$ also contains at least two of the three components of $\h_3^{red}$ which contain $p$, by the preceding lemma.  However, none of the components of $\h_3^{hyp}$ can contain three of the components of $\h_3^{red}$ containing $p$ because, in that case, there would be more than 18 components of $\h_3^{hyp}$ meeting at $p$ (counting multiplicity). This all implies that $D_{\alpha}$ contains exactly two of the components of $\h_3^{red}$ which intersect at $p$.
\end{proof}
\begin{proposition}
The divisor $D_{\alpha}^{red}\subset D_{\alpha}$ has simple normal crossings.
\end{proposition}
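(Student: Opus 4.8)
The plan is to verify the normal crossings condition locally at each point of $D_{\alpha}^{red}$, treating separately the points that lie on $\h_3^{red,sing}$ and those that do not. By the preceding lemma $D_{\alpha}^{red}$ is a union of irreducible components of $\h_3^{red}$; each such component is a translate of $\h_2\times\h_1$, hence smooth of dimension $4$, so it is a reduced divisor in the smooth $5$-fold $D_{\alpha}$, and these translates are precisely the irreducible components of $D_{\alpha}^{red}$. It therefore suffices to show that wherever several of them meet, they meet like coordinate hyperplanes.

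First I would dispose of a point $p\in D_{\alpha}^{red}\setminus\h_3^{red,sing}$. Since $\h_3^{red,sing}$ is by definition the singular locus of $\h_3^{red}$, the variety $\h_3^{red}$ is smooth at $p$, so exactly one of its irreducible components passes through $p$; that component is then the only component of $D_{\alpha}^{red}$ through $p$, so $D_{\alpha}^{red}$ is smooth near $p$ and has a single branch there. There is nothing to check in this case.

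The substantive case is $p\in D_{\alpha}^{red}\cap\h_3^{red,sing}$. Here I would use Hain's local model: there are holomorphic coordinates $(\tau_1,\tau_2,\tau_3,z_1,z_2,z_3)$ on $\h_3$ near $p$ in which the three irreducible components of $\h_3^{red}$ through $p$ are the coordinate subspaces $\{z_1=z_2=0\}$, $\{z_1=z_3=0\}$ and $\{z_2=z_3=0\}$. By Corollary \ref{corollary51} exactly two of these, say $V$ and $V'$, are contained in $D_{\alpha}$, and since the components of $D_{\alpha}^{red}$ through $p$ are exactly the components of $\h_3^{red}$ through $p$ that lie in $D_{\alpha}$, we have $D_{\alpha}^{red}=V\cup V'$ near $p$. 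Because $D_{\alpha}$ is smooth of dimension $5$ (as shown above), $T_pD_{\alpha}$ is a $5$-dimensional subspace of $T_p\h_3\cong\CC^6$ containing both $T_pV$ and $T_pV'$. Any two of the index pairs $\{1,2\}$, $\{1,3\}$, $\{2,3\}$ share exactly one index, so whichever two components occur one gets $T_pV\cap T_pV'=\{dz_1=dz_2=dz_3=0\}$, a subspace of dimension $3$; hence $\dim(T_pV+T_pV')=4+4-3=5=\dim T_pD_{\alpha}$, which forces $T_pV+T_pV'=T_pD_{\alpha}$. Thus $V$ and $V'$ are smooth divisors in $D_{\alpha}$ meeting transversely at $p$, so choosing local equations $w_1,w_2$ for $V,V'$ in $D_{\alpha}$ (their differentials at $p$ are independent) and completing to a holomorphic coordinate system $(w_1,\dots,w_5)$ on $D_{\alpha}$ presents $D_{\alpha}^{red}$ near $p$ as $\{w_1w_2=0\}$, which is the simple normal crossings condition.

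I do not expect a genuine obstacle here: the content has been front-loaded into Corollary \ref{corollary51} and Hain's coordinate description of $\h_3^{red}$ near $\h_3^{red,sing}$. Indeed, the fact that at most two of the three branches of $\h_3^{red}$ through $p$ can lie in $D_{\alpha}$ is already forced by the smoothness of $D_{\alpha}$, since $T_pV_1+T_pV_2+T_pV_3=T_p\h_3$; Corollary \ref{corollary51} is what upgrades this to exactly two and thereby fixes the number of branches of $D_{\alpha}^{red}$ at $p$. The only step requiring real care is the transversality count above, which uses the precise form of Hain's coordinates.
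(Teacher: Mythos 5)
Your proof is correct and follows essentially the same route as the paper: smoothness of each branch comes from the preceding lemma identifying the components of $D_{\alpha}^{red}$ with components of $\h_3^{red}$, and the normal crossings condition at points of $\h_3^{red,sing}$ comes from Corollary \ref{corollary51} plus transversality of the two branches. Your explicit tangent-space dimension count in Hain's local coordinates is in fact a more careful justification of the transversality than the paper's one-line assertion that no two components of $\h_3^{red}$ are tangent in $\h_3$.
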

\begin{proof}
Since the components of $D_{\alpha}^{red}$ are components of $\h_3^{red}$, each component of $D_{\alpha}^{red}$ is non-singular. Now we need only show that $D_{\alpha}^{red}$ has normal crossings. 

Suppose that $p\in D_{\alpha}^{red}$ is a singular point. Then $p\in \h_3^{red,sing}$ and by the preceding proposition exactly two of the three components of $\h_3^{red}$ which intersect at $p$ are contained in $D_{\alpha}$. Since no two of these components are tangent in $\h_3$, this implies that the two components which lie in $D_{\alpha}$ intersect transversely. 
This completes the proof.
\end{proof}
\section{\texorpdfstring{The Integral Homology of the Components of $\h_3^{hyp}$}{The Homology of the components of h3hyp}}
\begin{remark}
In this section, all homology groups are taken with $\ZZ$-coefficients.
\end{remark}
In this section we carry out a computation of the groups $H_k(D_{\alpha})$ for $k\geq 3$ of an arbitrary component $D_{\alpha}$ of $\h_3^{hyp}$. Since there is a biholomorphism $D_{\alpha}\cong \cHH_3^c[0]$, this computation leads directly to the proof of Theorem \ref{1.3}. Unfortunately, a successful computation of $H_2(D_{\alpha})$ has remained elusive. 

The primary tool in this calculation is the PL Gysin sequence from \cite{hain2006finiteness}. Specifically, we will apply the long exact sequence
\begin{equation*}
\cdots \rightarrow H_c^{10-k-1}(D_{\alpha}^{red})\rightarrow H_k(D_{\alpha})\rightarrow H_k(D_{\alpha}-D_{\alpha}^{red})\rightarrow \cdots 
\end{equation*}
to the computation of the homology groups of $D_{\alpha}$. In order to carry this out, we will need to pin down the compactly supported cohomology of $D_{\alpha}^{red}$ and the homology of $D_{\alpha}-D_{\alpha}^{red}$. Fortunately, using the spectral sequence for compactly supported cohomology appearing in \cite{hain2006finiteness}, we will be able to compute the groups $H_c^k(D_{\alpha}^{red})$ exactly. Since $D_{\alpha}-D_{\alpha}^{red}$ is a $K(T\Delta_3,1)$ (c.f. \cite{brendle2014generators}, \cite{hain2006finiteness}), much of its homology can be computed using \citep[Main Theorem 1]{brendle2013cohomology}. Their result implies that the homology groups $H_k(T\Delta_3)$ all vanish when $k\geq 3$ and that $H_2(T\Delta_3)$ is a free abelian group.

\vspace{.1in}
%%%%%Spectral sequence discussion added in revisions. In the original version, it was my intention to refer to the spectral sequence described in Section 3.2 (proposition 7) of Hain's paper "Finiteness and Torelli spaces". In order to make this paper more self-contained, I have decided to give a very brief exposition of this spectral sequence. %%%%%
Suppose that $X$ is a PL manifold, that $Y$ is a closed PL subspace that is a locally finite union 
$Y = \bigcup_{i\in I}Y_i$
of closed PL subspaces $X$, and that $I$ is a partially ordered set. Define
\begin{equation}
Y_{(i_0,\ldots, i_k)} = Y_{i_0}\cap\cdots \cap Y_{i_k}.
\end{equation}
and set 
\begin{equation}
\cY_k = \coprod_{i_0<\cdots <i_k}Y_{(i_0,\ldots, i_k)}.
\end{equation}
Then it is shown in \citep[Proposition 7]{hain2006finiteness} that there is a spectral sequence 
\begin{equation}\label{hainspectral}
E_1^{s,t} = H_c^t(\cY_s)\implies H_c^{s+t}(Y)
\end{equation}
where $\cY_s = \displaystyle \coprod_{\alpha_0<\cdots <\alpha_s} Y_{\alpha_0}\cap\cdots \cap Y_{\alpha_s}$.
%%%%%%%%%%%%%%%%%%%%%%%

\begin{lemma}\label{vanishinglemma}
The groups $H_c^k(D_{\alpha}^{red})$ vanish unless $k=7$ or $k=8$, in which case they are free abelian.%%%wording edited here
\end{lemma}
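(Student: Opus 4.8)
The plan is to feed the covering of $D_\alpha^{red}$ by its irreducible components into the compactly supported spectral sequence (\ref{hainspectral}). By the earlier lemma identifying the components of $D_\alpha^{red}$ with irreducible components of $\h_3^{red}$, these components are $\Sp_3(\ZZ)$-translates of $\h_2\times\h_1$; enumerate them as $\{Y_i\}_{i\in I}$ for a countable, totally ordered index set $I$. Each $Y_i$ is biholomorphic to $\h_2\times\h_1$, hence diffeomorphic to $\RR^8$, and the family $\{Y_i\}$ is locally finite since at most three components of $\h_3^{red}$ pass through any point of $\h_3$ \cite{hain2002rational}. So the hypotheses of (\ref{hainspectral}) are met with $X=D_\alpha$ and $Y=D_\alpha^{red}=\bigcup_i Y_i$.

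The first real step is to pin down the spaces $\cY_s$. I would first show $\cY_s=\emptyset$ for $s\geq 2$: a point of a triple intersection $Y_i\cap Y_j\cap Y_k$ (with $i,j,k$ distinct) lies on more than one component of $\h_3^{red}$, hence on $\h_3^{red,sing}$, where exactly three components of $\h_3^{red}$ meet \cite{hain2002rational}; those three would then be $Y_i,Y_j,Y_k$, all contained in $D_\alpha$, contradicting Corollary \ref{corollary51}. So only the columns $s=0,1$ survive, with $\cY_0=\coprod_i Y_i$ a disjoint union of copies of $\RR^8$. For $\cY_1$: each nonempty $Y_i\cap Y_j$ lies in $\h_3^{red,sing}$, and by Hain's local normal form for the three sheets of $\h_3^{red}$ near a point of $\h_3^{red,sing}$ \cite{hain2002rational} it is everywhere locally a single smooth $6$-dimensional coordinate subspace; hence it is a smooth $6$-manifold, and being closed and contained in $\h_3^{red,sing}$ it is a disjoint union of irreducible components of $\h_3^{red,sing}$, each an $\Sp_3(\ZZ)$-translate of $\h_1^3$ and thus diffeomorphic to $\RR^6$. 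Corollary \ref{corollary51} also shows that each component $Z$ of $D_\alpha\cap\h_3^{red,sing}$ lies in exactly one pair $Y_i\cap Y_j$, so $\cY_1$ is a disjoint union of copies of $\RR^6$, one for each such $Z$.

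The computation then falls out. Using $H_c^k(\RR^n)=\ZZ$ for $k=n$ and $0$ otherwise, the $E_1$-page has $E_1^{0,8}=\bigoplus_{i\in I}\ZZ$, $E_1^{1,6}=\bigoplus_Z\ZZ$ (the sum over components $Z$ of $D_\alpha\cap\h_3^{red,sing}$), and all other $E_1^{s,t}=0$. Since the only nonzero entries sit in bidegrees $(0,8)$ and $(1,6)$, every differential on every page is forced to vanish, so $E_\infty=E_1$, and each total degree carries at most one nonzero graded piece, leaving no extension problem. Hence $H_c^8(D_\alpha^{red})\cong\bigoplus_{i\in I}\ZZ$, the group $H_c^7(D_\alpha^{red})$ is free abelian on the components of $D_\alpha\cap\h_3^{red,sing}$, and $H_c^k(D_\alpha^{red})=0$ for $k\neq 7,8$.

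The main obstacle is the geometric bookkeeping of the second paragraph: establishing that the nerve of the covering is a graph (no triple overlaps) and that each edge-space $Y_i\cap Y_j$ is a disjoint union of copies of $\RR^6$. This is where all of the structural input about $\h_3^{red}$, $\h_3^{red,sing}$, and — decisively — Corollary \ref{corollary51} is consumed; once that combinatorial picture is secured, the homotopy-theoretic input (compactly supported cohomology of Euclidean space) and the degeneration of the spectral sequence are routine.
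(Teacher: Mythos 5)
Your proposal is correct and follows essentially the same route as the paper: it feeds the decomposition of $D_{\alpha}^{red}$ into irreducible components (translates of $\h_2\times\h_1\cong\RR^8$, with pairwise intersections being components of $\h_3^{red,sing}\cong\RR^6$ and no triple intersections by Corollary \ref{corollary51}) into the spectral sequence (\ref{hainspectral}), observes that the only nonzero entries sit in bidegrees $(0,8)$ and $(1,6)$ so all differentials vanish, and reads off the answer. The only difference is that you spell out in more detail the geometric bookkeeping (local finiteness, emptiness of triple overlaps, the structure of $\cY_1$) that the paper compresses into the phrase ``simple normal crossings,'' which is a welcome elaboration rather than a deviation.
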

\begin{proof}
We apply the spectral sequence (\ref{hainspectral}) with $X= D_{\alpha}$ and $Y = D_{\alpha}^{red}$ and an arbitrary ordering on the set $I$ of components of $Y$. Since $D_{\alpha}^{red}$ is a simple normal crossings divisor in $D_{\alpha}$, the subsets $\cY_s$ are all empty when $s\geq 2$ by Corollary \ref{corollary51}. Consequently, the $E_1$ page of the spectral sequence
\begin{equation}
E_1^{s,t} = H_c^t(\cY_s)\implies H_c^{s+t}(D_{\alpha}^{red})
\end{equation}
is very sparse. Since the components of $D_{\alpha}^{red}$ and $D_{\alpha}^{red,sing}$ are diffeomorphic to $\h_2\times \h_1\cong \RR^8$ and $\h_1^3\cong \RR^6$, respectively, the only non-trivial portion of the $E_1$ page is
\begin{equation*}
\begin{tabular}{l|r}
 & \xymatrix{
  H^8_c(\cY_0)&0  \\
 0 &0 \\
  0&H_c^6(\cY_1) \\
  \vdots& \vdots    \\
  0 & 0
  }\\
\hline\\
\end{tabular}
\end{equation*}
Because of the shape of the $E_1$ page, the differentials in this spectral sequence all vanish. The claim now follows immediately. 
\end{proof}
\begin{proposition}\label{endprop}
The groups $H_k(D_{\alpha})$ vanish for all $k\geq 4$ and $H_3(D_{\alpha})$ is free abelian.
\end{proposition}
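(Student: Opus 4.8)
The plan is to run the PL Gysin sequence of \cite{hain2006finiteness} for the pair $D_{\alpha}^{red}\subset D_{\alpha}$ and feed into it the two computations already in hand: Lemma \ref{vanishinglemma}, which says that $H_c^k(D_{\alpha}^{red})$ vanishes outside $k\in\{7,8\}$ and is free abelian there, and the Brendle--Childers--Margalit result \cite{brendle2013cohomology} that $H_k(T\Delta_3)=0$ for $k\geq 3$ while $H_2(T\Delta_3)$ is free abelian. All of the geometric input has already been assembled: $D_{\alpha}$ is a smooth complex manifold of complex dimension $5$, hence an oriented PL manifold of real dimension $10$, and its open subset $D_{\alpha}-D_{\alpha}^{red}$ is biholomorphic to $\cHH_3[0]$, which is a $K(T\Delta_3,1)$ \cite{brendle2014generators}, so $H_k(D_{\alpha}-D_{\alpha}^{red})\cong H_k(T\Delta_3)$.

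Concretely, I would write down the relevant stretch of the sequence. Poincar\'e--Lefschetz duality on the $10$-manifold $D_{\alpha}$ converts the long exact sequence of compactly supported cohomology for the decomposition $D_{\alpha}=(D_{\alpha}-D_{\alpha}^{red})\amalg D_{\alpha}^{red}$ into
\begin{equation*}
\cdots \to H_c^{9-k}(D_{\alpha}^{red}) \to H_k(D_{\alpha}-D_{\alpha}^{red}) \to H_k(D_{\alpha}) \to H_c^{10-k}(D_{\alpha}^{red}) \to H_{k-1}(D_{\alpha}-D_{\alpha}^{red}) \to \cdots,
\end{equation*}
and the rest is a degree count. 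For $k\geq 4$ the left neighbour $H_k(D_{\alpha}-D_{\alpha}^{red})=H_k(T\Delta_3)$ vanishes since $k\geq 3$, and the right neighbour $H_c^{10-k}(D_{\alpha}^{red})$ vanishes since $10-k\leq 6$ lies outside $\{7,8\}$; exactness then pinches $H_k(D_{\alpha})$ to $0$. For $k=3$ the term $H_3(D_{\alpha}-D_{\alpha}^{red})=H_3(T\Delta_3)$ vanishes, so the sequence realizes $H_3(D_{\alpha})$ as a subgroup of $H_c^{7}(D_{\alpha}^{red})$; the latter is free abelian by Lemma \ref{vanishinglemma}, and a subgroup of a free abelian group is free abelian, whence $H_3(D_{\alpha})$ is free abelian. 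Since $D_{\alpha}\cong\cHH_3^c[0]$, this also gives Theorem \ref{1.3}.

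I do not expect a genuine obstacle here: the substantive content is Lemma \ref{vanishinglemma} (whose hypotheses are the simple normal crossings structure of $D_{\alpha}^{red}$ coming from Corollary \ref{corollary51}, together with the compactly supported cohomology of the Euclidean strata), and that is already proved. What remains needs care, not ideas: getting the degree shifts in the Gysin sequence exactly right, keeping track of the real dimension $10$ of $D_{\alpha}$, checking that every group $H_c^{\bullet}(D_{\alpha}^{red})$ adjacent to $H_k(D_{\alpha})$ lands outside degrees $7$ and $8$ except for the single group $H_c^7$ in the case $k=3$, and invoking the structure theorem for subgroups of free abelian groups. It is worth emphasizing what the method does \emph{not} yield: nothing about $H_2(D_{\alpha})$, and nothing about $H_3(D_{\alpha})$ beyond freeness; the missing rank-$2$ integral statement (and hence an answer to Question \ref{question2}) would require a genuinely new ingredient.
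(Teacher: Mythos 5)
Your proof is correct and takes essentially the same route as the paper: the paper likewise runs the PL Gysin sequence $H_c^{10-k-1}(D_{\alpha}^{red})\to H_k(D_{\alpha}-D_{\alpha}^{red})\to H_k(D_{\alpha})\to H_c^{10-k}(D_{\alpha}^{red})$, kills the outer terms for $k\geq 4$ using Lemma \ref{vanishinglemma} and the vanishing of $H_k(T\Delta_3)$ for $k\geq 3$, and for $k=3$ obtains the injection $H_3(D_{\alpha})\hookrightarrow H_c^7(D_{\alpha}^{red})$ into a free abelian group. Your degree bookkeeping and the concluding appeal to the fact that subgroups of free abelian groups are free abelian match the paper exactly.
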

\begin{proof}
For each $k\geq 0$ the PL Gysin sequence has segments
\begin{equation*}
H_c^{10-k-1}(D_{\alpha}^{red})\rightarrow H_k(D_{\alpha}-D_{\alpha}^{red})\rightarrow H_k(D_{\alpha})\rightarrow H_c^{10-k}(D_{\alpha}^{red}).
\end{equation*}
Combined with the fact that $H_k(D_{\alpha}) = 0$ for $k\geq 3$ and the result of Lemma \ref{vanishinglemma}, we immediately find that $H_k(D_{\alpha}) = 0$ as long as $k\geq 4$. When $k = 3$, we get an injection
\begin{equation*}
0\rightarrow H_3(D_{\alpha})\rightarrow H_c^7(D_{\alpha}^{red}).
\end{equation*}
Since $H_c^7(D_{\alpha}^{red})$ is free abelian by the spectral sequence computation, $H_3(D_{\alpha})$ is also free abelian.
\end{proof}
%Added proof of Theorem 1.3
\begin{proof}[Proof of Theorem 1.3]
The claim follows at once from Proposition \ref{endprop} and the fact that $D_{\alpha}\cong \cHH_3^c[0]$ is simply-connected. 
\end{proof}

%Explanation of the concluding remarks
\vspace{.1in}
At this point, we would like to address Question 2 and briefly discuss a necessary and sufficient condition for $\cHH^c_3[0]$ to be homotopy equivalent to a bouquet of spheres. In view of the isomorphisms $H_k(D_{\alpha}-D_{\alpha}^{red})\cong H_k(\cHH_3[0]) \cong H_k(T\Delta_3)$ and the fact that $D_{\alpha}$ is simply-connected, the PL Gysin sequence has a segment
\begin{align*}
0\rightarrow H_3(D_{\alpha})\rightarrow  H_c^7(D^{red}_{\alpha}) \rightarrow &H_2(T\Delta_3)\\
& \rightarrow H_2(D_{\alpha})\rightarrow H_c^8(D_{\alpha}^{red})\rightarrow H_1(T\Delta_3)\rightarrow 0
\end{align*}
Since $H_2(T\Delta_3)$ is free abelian, if it were true that the image of $H_c^7(D_{\alpha})$ in $H_2(T\Delta_3)$ were a direct summand, then it would follow that $H_2(D_{\alpha})$ is also free abelian. Since $H_3(D_{\alpha})$ is free abelian, it follows from the Hurewicz Theorem that, if this did occur, there would be a continuous map
$$
\bigvee_{I_2} S^2\vee \bigvee_{I_3}S^3 \rightarrow D_{\alpha}\ \ \ \ \ \ I_2,I_3\ \text{some index sets with $I_2$ infinite}
$$
inducing isomorphisms on integral homology groups. By the homological form of Whitehead's Theorem, such a map would necessarily be a homotopy equivalence. On the other hand, if such a homotopy equivalence were to exist, the image of the boundary map $H_c^7(D_{\alpha}^{red})\rightarrow H_2(T\Delta_3)$ would have to be a direct summand, since then its cokernel would have to be a subgroup of the free abelian group $H_2(D_{\alpha})$. We summarize this discussion as follows.
%Changed lemma to proposition%
\begin{proposition}
The component $D_{\alpha}$ is homotopy equivalent to a bouquet of 2-spheres and 3-spheres if and only if the image of $H_c^7(D_{\alpha})$ in $H_2(T\Delta_3)$ is a direct summand.
\end{proposition}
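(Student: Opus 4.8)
The plan is to read everything off the PL Gysin segment already displayed in the discussion,
\begin{align*}
0\to H_3(D_\alpha)\to H^7_c(D^{red}_\alpha)\xrightarrow{\beta} &H_2(T\Delta_3)\\
&\xrightarrow{\gamma} H_2(D_\alpha)\xrightarrow{\delta} H^8_c(D^{red}_\alpha)\to H_1(T\Delta_3)\to 0,
\end{align*}
together with the facts in hand: $D_\alpha\cong\cHH_3^c[0]$ is a simply connected smooth manifold (hence has the homotopy type of a CW complex), $H_k(D_\alpha)=0$ for $k\geq 4$, the groups $H_3(D_\alpha)$, $H^7_c(D^{red}_\alpha)$, $H^8_c(D^{red}_\alpha)$ are free abelian (Proposition \ref{endprop}, Lemma \ref{vanishinglemma}), and $H_2(T\Delta_3)$ is free abelian (Brendle--Childers--Margalit). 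Both implications will be reduced to the single statement that $H_2(D_\alpha)$ is free abelian.

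For the ``only if'' direction, assume $D_\alpha$ is homotopy equivalent to $\bigvee_{I_2}S^2\vee\bigvee_{I_3}S^3$; then $H_2(D_\alpha)$ is free abelian. Exactness at $H_2(T\Delta_3)$ shows that $\gamma$ induces an injection $H_2(T\Delta_3)/\operatorname{im}\beta\hookrightarrow H_2(D_\alpha)$, so $H_2(T\Delta_3)/\operatorname{im}\beta$ is a subgroup of a free abelian group, hence itself free abelian. Therefore the extension $0\to\operatorname{im}\beta\to H_2(T\Delta_3)\to H_2(T\Delta_3)/\operatorname{im}\beta\to 0$ splits, and $\operatorname{im}\beta$ is a direct summand of $H_2(T\Delta_3)$.

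For the ``if'' direction, assume $H_2(T\Delta_3)=\operatorname{im}\beta\oplus C$ with $C$ free abelian. Then $\gamma$ restricts to an isomorphism $C\xrightarrow{\sim}\operatorname{im}\gamma=\ker\delta\subseteq H_2(D_\alpha)$, while $H_2(D_\alpha)/\ker\delta\cong\operatorname{im}\delta$ is a subgroup of the free abelian group $H^8_c(D^{red}_\alpha)$, hence free abelian; so $H_2(D_\alpha)$ is an extension of a free abelian group by a free abelian group, hence free abelian. Now build a homology equivalence out of a bouquet. By Hurewicz choose maps $f_i\colon S^2\to D_\alpha$ representing a basis of $\pi_2(D_\alpha)\cong H_2(D_\alpha)$; since $D_\alpha$ is simply connected the Hurewicz homomorphism $\pi_3(D_\alpha)\to H_3(D_\alpha)$ is surjective (for instance, the mapping cone of $\bigvee_{I_2}S^2\to D_\alpha$ is $2$-connected, so $\pi_3(D_\alpha)$ surjects onto $H_3$ of this cone, which is $H_3(D_\alpha)$), so a basis of the free abelian group $H_3(D_\alpha)$ is represented by maps $g_j\colon S^3\to D_\alpha$. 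The resulting map $g\colon\bigvee_{I_2}S^2\vee\bigvee_{j}S^3\to D_\alpha$ induces an isomorphism on $H_2$ (via the $f_i$, since the $3$-spheres contribute nothing to $H_2$) and on $H_3$ (via the $g_j$, since $H_3(\bigvee S^2)=0$), and trivially in all other degrees because $H_k(D_\alpha)=0$ for $k\geq 4$ and both spaces are simply connected. As both spaces are simply connected, the homological form of Whitehead's Theorem \citep[Corollary 4.33]{hatcher2002algebraic} makes $g$ a homotopy equivalence.

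The bulk of the argument is the elementary homological algebra with the Gysin sequence (using only that subgroups of, and extensions by, free abelian groups stay free abelian). The one point requiring care — and the reason the direct-summand condition is not merely sufficient but exactly the obstruction — is the surjectivity of the degree-$3$ Hurewicz map for the simply connected space $D_\alpha$: this is what lets us realize a full basis of $H_3(D_\alpha)$ by genuine maps out of $S^3$, so that the freeness of $H_2(D_\alpha)$ is the only remaining ingredient needed to assemble the wedge.
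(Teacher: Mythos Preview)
Your proof is correct and follows essentially the same route as the paper's discussion preceding the proposition: both directions reduce to whether $H_2(D_\alpha)$ is free abelian, and the homotopy equivalence is then built from Hurewicz plus the homological Whitehead theorem. Your write-up is in fact slightly more careful than the paper's, which invokes ``the Hurewicz Theorem'' to produce the map out of the bouquet without explaining why $\pi_3(D_\alpha)\to H_3(D_\alpha)$ is surjective; your mapping-cone argument fills exactly that gap.
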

%
%
%
%
%
%%%%%BIBLIOGRAPHY%%%%%

\vspace{.15in}
\noindent Kevin Kordek\\
Department of Mathematics\\
Mailstop 3368\\
Texas A$\&$M University\\
College Station, TX 77843-3368\\
 E-mail: \sf{kordek@math.tamu.edu}

\begin{thebibliography}{8}

\bibitem{acampo}
Norbert A'Campo: 
\emph{Tresses, monodromie et le groupe symplectique},
Comment. Math. Helv. 54
(1979), 318Ð327

\bibitem{arbarello1985geometry}
Enrico Arbarello, Maurizio Cornalba, Phillip Griffiths, and Joseph Daniel Harris:
\emph{Geometry of Algebraic Curves} Grundlehren der Mathematischen Wissenschaften no. 276,
1985,
Springer-Verlag, Berlin 

\bibitem{arbarello2011geometry}
Enrico Arbarello, Maurizio Cornalba, Phillip Griffiths, and Joseph Daniel Harris:
\emph{Geometry of Algebraic Curves: Volume II with a contribution by Joseph Daniel Harris},
2011,
Springer Science \& Business Media

\bibitem{bergstrombrown}
Jonas Bergstr\"{o}m and Francis Brown:
\emph{Inversion of series and the cohomology of the moduli spaces $\cM_{0,n}^{\delta}$},
In \emph{Motives, quantum field theory, and pseudodifferential operators},
Clay Math. Proc. 12, (2010), 119-126, Amer. Math. Soc., Providence.


\bibitem{birkenhake2004complex}
Christina Birkenhake and Herbert Lange:
\emph{Complex \text{Abelian} varieties},
2004,
Springer Science \& Business Media

\bibitem{borel2008intersection}
Armand Borel:
\emph{Intersection \text{Cohomology}},
2008,
Springer Science \& Business Media


\bibitem{brendle2013cohomology}
Tara Brendle, Leah Childers, and Dan Margalit:
\emph{Cohomology of the hyperelliptic Torelli group},
Israel Journal of Mathematics,
195,
2,
613--630,
2013,
Springer


\bibitem{brendle2014generators}
Tara Brendle, Dan Margalit, and Andrew Putman:
\emph{Generators for the hyperelliptic Torelli group and the kernel of the Burau representation at $t=-1$},
Inventiones Mathematicae,
1--48,
2014,
Springer

\bibitem{brown1982cohomology}
Kenneth S. Brown:
\emph{Cohomology of groups},
87,
1982,
Springer Science \& Business Media

%\bibitem{cohen}
%F.R. Cohen: 
%\emph{Homology of mapping class groups for surfaces of low genus},
% Contemp. Math.
%58 (1987), 21Ð30
%


\bibitem{farb2011primer}
Benson Farb and Dan Margalit:
\emph{A Primer on Mapping Class Groups} (PMS-49),
2011,
Princeton University Press


\bibitem{fischer1976complex}
Gerd Fischer:  \emph{Complex Analytic Geometry},
1976, Springer-Verlag Berlin-Heidelberg-New York

\bibitem{fritzsche2002holomorphic}
Klaus Fritzsche and Hans Grauert:
\emph{From holomorphic functions to complex manifolds},
2002,
Springer Science \& Business Media


\bibitem{grauert1994several}
Hans Grauert, Thomas Peternell, and Rheinhold Remmert:
\emph{Several complex variables VII: sheaf-theoretical methods in complex analysis},
1994,
Springer Science \& Business Media

\bibitem{griffiths2014principles}
Phillip Griffiths and Joseph Harris:
\emph{Principles of algebraic geometry},
2014,
John Wiley \& Sons

\bibitem{grushevsky}
Samuel Grushevsky and Riccardo Salvati Manni:
\emph{Jacobians with a vanishing theta-null in genus 4},
Israel Journal of Mathematics,
March 2008, Volume 164, Issue 1, pp. 303-315

\bibitem{haininfinitesimal}
Richard Hain:
\emph{Infinitesimal presentations of the Torelli groups},
J. Amer. Math. Soc. 10 (1997), 597-651

\bibitem{hain2002rational}
Richard Hain:
\emph{The rational cohomology ring of the moduli space of abelian 3-folds},
Math. Res.,
Lett. 9, no. 4,
473--491,
2002

\bibitem{hain2006finiteness}
Richard Hain: \emph{Finiteness and Torelli spaces},
Proceedings of Symposia in Pure Mathematics, 2006,
Providence, RI; American Mathematical Society; 1998
  
\bibitem{hain2015torelli}
Richard Hain:
\emph{Torelli Odyssey}. Unpublished manuscript

\bibitem{hainpersonal}
Richard Hain: Personal communication, May 2015



\bibitem{hartshorne1977algebraic}
Robin Hartshorne:
\emph{Algebraic geometry},
1977,
Springer Science \& Business Media

\bibitem{hatcher2002algebraic}
Allen Hatcher:
\emph{Algebraic topology},
2002,
Cambridge University Press


%\bibitem{kawazumi}
%Nariya Kawazumi:
%\emph{Homology of hyperelliptic mapping class groups for surfaces},
%Topology Appl.
%76 (1997), 203Ð216\

\bibitem{lochak}
Pierre Lochak:
\emph{On arithmetic curves in the moduli spaces of curves},
Journal of the Institute of Mathematics of Jussieu, Volume 4, Issue 03, July 2005, pp. 443-508

\bibitem{mess1992torelli}
Geoffrey Mess:
\emph{The \text{Torelli} groups for genus 2 and 3 surfaces},
Topology,
775--790,
1992,
Elsevier

\bibitem{morifuji}
Takayuki Morfuji:
\emph{On a secondary invariant of the hyperelliptic mapping class group},
Algebraic Topology - Old and New, Banach Center Publications 85 (2009) 83-92

\bibitem{nart}
Enric Nart: \emph{Bitangents and theta characteristics of plane quartics},
Notes del seminari Corbes de G\`{e}neres
3, 2006

\bibitem{spivey}
Joseph Spivey:
\emph{Twisted cohomology of hyperelliptic mapping class groups},
Doctoral Thesis, Duke University, 2008


%\bibitem{lee1995homology}
%Ronnie Lee and Steven H. Weintraub:
%%  author={Lee, Ronnie and Weintraub, Steven H},
%On the homology of double branched covers,
%Proceedings of the American Mathematical Society,
%123,
%4,
%1263--1266,
%1995



\end{thebibliography}
\end{document}